\documentclass[12pt]{amsart}

\usepackage[margin=1in]{geometry}

\usepackage{amsthm,amsmath,amsfonts,amssymb}
\usepackage{ytableau}
\ytableausetup{smalltableaux}
\usepackage{enumerate}

\usepackage{graphicx}

\usepackage{tikz, tikz-3dplot, pgfplots}
\usepackage{tkz-graph}
\usetikzlibrary{matrix}
\usetikzlibrary[positioning,patterns] 
\usepackage{color}

\newtheorem{thm}{Theorem}[section]

\newtheorem{cor}[thm]{Corollary}
\newtheorem{lemma}[thm]{Lemma}
\theoremstyle{definition}
\newtheorem{example}[thm]{Example}
\newtheorem{defn}[thm]{Definition}

\newcommand{\N}{\mathbb{N}}
\newcommand{\Z}{\mathbb{Z}}
\newcommand{\x}{\mathbf{x}}
\newcommand{\C}{\mathbb{C}}
\newcommand{\QSym}{\mathrm{QSym}}
\newcommand{\KP}{K_P(\x)}
\newcommand{\KQ}{K_Q(\x)}
\newcommand{\Comp}{\mathrm{Comp}}
\newcommand{\des}{\mathrm{des}}

\DeclareMathOperator{\anti}{anti}
\DeclareMathOperator{\rank}{rank}
\DeclareMathOperator{\co}{co}
\DeclareMathOperator{\sh}{sh}
\DeclareMathOperator{\supp}{supp}
\newcommand{\antikij}{\anti_{k,i,j}}

\DeclareMathOperator{\jumppair}{jumppair}
\DeclareMathOperator{\jump}{jump}
\DeclareMathOperator{\upjump}{up-jump}
\newcommand{\V}{V}

\usepackage[backend=bibtex]{biblatex}
\addbibresource{sample.bib}

\title{$P$-Partition Generating Function Equivalence of Naturally Labeled Posets}

\author{Ricky Ini Liu}
\address{Department of Mathematics, North Carolina State University, Raleigh, NC}
\email{riliu@ncsu.edu}
\author{Michael Weselcouch}
\address{Department of Mathematics, North Carolina State University, Raleigh, NC}
\email{mweselc@ncsu.edu}

\thanks{R. I. Liu and M. Weselcouch were partially supported by National Science Foundation grant DMS-1700302.}

\date{\today}

\begin{document}
\maketitle

\begin{abstract}
The \emph{$P$-partition generating function} of a (naturally labeled) poset $P$ is a quasisymmetric function enumerating order-preserving maps from $P$ to $\Z^+$. Using the Hopf algebra of posets, we give necessary conditions for two posets to have the same generating function. In particular, we show that they must have the same number of antichains of each size, as well as the same shape (as defined by Greene). We also discuss which shapes guarantee uniqueness of the $P$-partition generating function and give a method of constructing pairs of non-isomorphic posets with the same generating function.
\end{abstract}

\section{Introduction}
For a finite poset $(P, \prec)$ (labeled with the ground set $[n] = \{1, 2, \dots n \}$), the \emph{$P$-partition generating function} $\KP$ is a quasisymmetric function enumerating certain order-preserving maps from $P$ to $\Z^+$. The question of when two distinct posets can have the same $P$-partition generating function has been studied extensively in the case of skew Schur functions \cite{BTvW, MvW, RSvW}, by McNamara and Ward \cite{McNamaraWard} for general labeled posets, and by Hasebe and Tsujie \cite{HasebeTsujie} for rooted trees. The goal of this paper is to consider the naturally labeled case, that is, to give necessary and sufficient conditions for when two naturally labeled posets have the same $P$-partition generating function. (We say that $P$ is \emph{naturally labeled} if $x \preceq y$ implies $x \leq y$ as integers.)

In general, it is not true that a poset can be distinguished by its $P$-partition generating function. The smallest case in which two distinct naturally labeled posets have the same partition generating function is the two $7$-element posets shown below. We will explore this example further in Section 5, where we give a general construction for non-isomorphic posets with the same generating function.

\begin{center}
\begin{tikzpicture}
 [auto,
 vertex/.style={circle,draw=black!100,fill=black!100,thick,inner sep=0pt,minimum size=1mm}]
\node (v1) at ( 0,0) [vertex] {};
\node (v2) at ( 1,0) [vertex] {};
\node (v3) at ( -1,1) [vertex] {};
\node (v4) at ( 0,1) [vertex] {};
\node (v5) at ( 1,1) [vertex] {};
\node (v6) at ( 0,2) [vertex] {};
\node (v7) at ( 1,2) [vertex] {};
\draw [-] (v1) to (v4);
\draw [-] (v4) to (v6);
\draw [-] (v2) to (v5);
\draw [-] (v5) to (v7);
\draw [-] (v1) to (v3);
\draw [-] (v3) to (v6);
\draw [-] (v1) to (v7);
\draw [-] (v2) to (v6);
\end{tikzpicture}
\hspace{3 cm}
\begin{tikzpicture}
 [auto,
 vertex/.style={circle,draw=black!100,fill=black!100,thick,inner sep=0pt,minimum size=1mm}]
\node (v1) at ( 4,0) [vertex] {};
\node (v2) at ( 5,0) [vertex] {};
\node (v3) at ( 3,1) [vertex] {};
\node (v4) at ( 4,1) [vertex] {};
\node (v5) at ( 5,1) [vertex] {};
\node (v6) at ( 4,2) [vertex] {};
\node (v7) at ( 5,2) [vertex] {};
\draw [-] (v1) to (v4);
\draw [-] (v4) to (v6);
\draw [-] (v2) to (v5);
\draw [-] (v5) to (v7);
\draw [-] (v1) to (v3);
\draw [-] (v3) to (v7);
\draw [-] (v2) to (v6);
\end{tikzpicture}
\end{center}  

We will use tools from the combinatorial Hopf algebra structure on posets due to Schmitt \cite{Schmitt} (see also \cite{ABS}) to prove that if $\KP = \KQ$, then for all triples $(k, i, j)$, $P$ and $Q$ must have the same number of $k$-element order ideals that have $i$ maximal elements and whose complement has $j$ minimal elements. In particular, they must have the same number of antichains of each size, proving a conjecture of McNamara and Ward \cite{McNamaraWard}. As a result of our proof, one can compute certain coefficients in the fundamental quasisymmetric function expansion of $\KP$ explicitly in terms of the number of such ideals.

We will also show that if $\KP = \KQ$, then $P$ and $Q$ must have the same shape. Here, the \emph{shape} of a finite poset, denoted $\sh(P)$, is the partition $\lambda$ whose conjugate partition $\lambda'$ satisfies
\[\lambda'_1 + \lambda'_2+\cdots+\lambda'_i = a_i,\]
where $a_i$ is the largest number of elements in a union of $i$ antichains of $P$. In fact, we will prove a stronger statement, namely that if the support of $\KP$ and $\KQ$ in the fundamental quasisymmetric function basis is the same, then $P$ and $Q$ must have the same shape. This suggests the following question: for which partitions $\lambda$ does $\sh(P) = \lambda$ guarantee that $P$ is uniquely determined by $\KP$?

We show that if $\sh(P)$ has at most two parts, is a hook shape, or has the form $\sh(P) = (\lambda_1, 2, 1, \dots, 1)$, then $\KP = \KQ$ implies $P \cong Q$. Conversely, we show that if $\sh(P)$ contains $(3, 3, 1)$ or $(2, 2, 2, 2)$, then $\KP = \KQ$ does not necessarily imply $P \cong Q$ by constructing two distinct posets of this shape with the same generating function.  It remains to be answered what happens when $\sh(P) = (\lambda_1, 2, 2, 1, \dots, 1)$.

In Section 2 we will give some preliminary information; in Section 3 we state some necessary conditions for two posets to have the same generating function; in Section 4 we discuss when the shape of a poset ensures that its generating function is unique; and in Section 5 we give a general construction for pairs of posets with the same generating function.

\section{Preliminaries}

We begin with some preliminaries about posets, quasisymmetric functions, and Hopf algebras. For more information, see \cite{GrinbergReiner, McNamaraWard, Stanley}.

\subsection{Posets and $P$-partitions}

Let $P = (P, \prec)$ be a finite poset. A \emph{labeling} of $P$ is a bijection $\omega \colon P \to \{1, 2, \dots, n\}$.

\begin{defn} For a labeled poset $(P, \omega)$, a \emph{$(P, \omega)$-partition} is a map $\sigma \colon P \to \Z^+$ that satisfies the following:
\begin{enumerate}[(a)]
\item If $x \preceq y$, then $\sigma(x) \leq \sigma(y)$.
\item If $x \preceq y$ and $\omega(x) > \omega(y)$, then $\sigma(x) < \sigma(y)$.
\end{enumerate}
\end{defn}

\begin{defn} The \emph{$(P, \omega)$-partition generating function} $K_{(P, \omega)}(x_1, x_2, \dots)$ for a labeled poset $(P, \omega)$ is given by 
\[ K_{(P, \omega)}(x_1, x_2, \dots ) = \sum_{(P, \omega)\text{-partition } \sigma}x_1^{|\sigma^{-1}(1)|}x_2^{|\sigma^{-1}(2)|}\dots,\]
where the sum ranges over all $(P, \omega)$-partitions $\sigma$.
\end{defn}
A labeled poset $(P, \omega)$ is equivalent to a poset $P$ with ground set $[n]$. Hence we may refer to the generating function $K_{(P, \omega)}(x_1, x_2, \dots )$ as $K_{P}(x_1, x_2, \dots )$ or $K_{P}(\mathbf{x})$ if the choice of $\omega$ is implicit.

In this paper, we will usually restrict our attention to the case when $P$ is $\emph{naturally labeled}$, that is, when $\omega$ is an order-preserving map. In this case, $\KP$ does not depend on our choice of natural labeling but only on the underlying structure of $P$.

A \emph{linear extension} of a poset $P$ with ground set $[n]$ is a permutation $\pi$ of $[n]$ that respects the relations in $P$, that is, if $x \preceq y$, then $\pi^{-1}(x) \leq \pi^{-1}(y)$.  The set of all linear extensions of $P$ is denoted $\mathcal{L}(P)$. Note that $|\mathcal L(P)|$ is the coefficient of $x_1x_2\cdots x_n$ in $\KP$.

\subsection{Compositions}

A \textit{composition} $\alpha= (\alpha_1, \alpha_2, \dots, \alpha_k)$ of $n$ is a finite sequence of positive integers summing to $n$. We denote the set of all compositions of $n$ by $\Comp_n$.  If $\alpha$ is a composition of $n$, then we write $|\alpha| = n$. The compositions of $n$ are in bijection with the subsets of $[n-1]$ in the following way: for any composition $\alpha$, define \[D(\alpha) = \{\alpha_1, \quad \alpha_1 + \alpha_2, \quad \dots, \quad \alpha_1 + \alpha_2 +\dots +\alpha_{k-1}\} \subseteq [n-1].\]

Likewise, for any subset $S = \{s_1, s_2, \dots, s_{k-1}\}\subseteq [n-1]$ with $s_1<s_2<\dots < s_{k-1}$, we can define the composition
\[\co(S) = (s_1, \quad s_2-s_1, \quad s_3-s_2, \quad \dots, \quad s_{k-1}-s_{k-2}, \quad n-s_{k-1}).\]

Given two nonempty compositions $\alpha = (\alpha_1, \alpha_2, \dots, \alpha_k)$ and $\beta = (\beta_1, \beta_2, \dots, \beta_m)$, their \emph{concatenation} is
\[\alpha \cdot \beta = (\alpha_1, \alpha_2, \dots, \alpha_k, \beta_1, \beta_2, \dots, \beta_m),\]
and their \emph{near-concatenation} is
\[ \alpha \odot \beta = (\alpha_1, \alpha_2, \dots, \alpha_k + \beta_1, \beta_2, \dots, \beta_m). \]
We will use the shorthand $1^k$ to denote the composition $(\underbrace{1, 1, \dots, 1}_k)$.

The \emph{ribbon representation} for $\alpha$ is the diagram having rows of sizes $(\alpha_1, \dots, \alpha_k)$ read from bottom to top with exactly one column of overlap between adjacent rows. For example, the figure below depicts the ribbon representation of $\alpha = (3, 1, 2, 4)$.
\begin{center}
\ydiagram{3+4,2+2,2+1,3}
\end{center}
Each composition can be written as the near-concatenation of compositions of all $1$s.  The composition $\alpha$ can be expressed as $\alpha = 1^{a_1} \odot 1^{a_2} \odot \dots \odot 1^{a_l}$, where $a_i$ is the number of boxes in the $i$th column of $\alpha$'s ribbon representation.  We will refer to the expansion $\alpha = 1^{a_1} \odot 1^{a_2} \odot \dots \odot 1^{a_l}$ as the \emph{near-concatenation decomposition} of $\alpha$.

A \emph{partition} $\lambda = (\lambda_1, \lambda_2, \dots)$ is a composition whose parts are weakly decreasing. The \emph{conjugate partition} $\lambda' = (\lambda'_1, \lambda'_2, \dots)$ is defined by $\lambda'_i = |\{j \mid \lambda_j \geq i\}|$.

\subsection{Quasisymmetric Functions}

A \emph{quasisymmetric function} in the variables $x_1, x_2, \dots$ (with coefficients in $\C$) is a formal power series $f(\mathbf{x}) \in \C[[\mathbf{x}]]$ of bounded degree such that, for any composition $\alpha$, the coefficient of $x_1^{\alpha_1}x_2^{\alpha_2}\cdots x_k^{\alpha_k}$ equals the coefficient of $x_{i_1}^{\alpha_1}x_{i_2}^{\alpha_2}\cdots x_{i_k}^{\alpha_k}$ whenever $i_1 < i_2 < \dots < i_k$.  We denote the algebra of quasisymmetric functions by $\QSym = \bigoplus_{n \geq 0} \QSym_n$, graded by degree.

There are two natural bases for $\QSym$, the monomial basis and the fundamental basis.  The \textit{monomial quasisymmetric function basis} $\{M_\alpha\}$, indexed by compositions $\alpha$, is given by 
\[M_\alpha = \sum_{1\leq i_1<i_2<\dots <i_k}  x_{i_1}^{\alpha_1}x_{i_2}^{\alpha_2}\cdots x_{i_k}^{\alpha_{k}}.\]
For example, $M_{(2, 1)} = \sum_{i<j}x_i^2x_j$.

The \textit{fundamental quasisymmetric function basis} $\{L_\alpha\}$ is also indexed by compositions $\alpha$ and is given by
\[L_\alpha = \sum_{\substack{i_1 \leq \dots \leq i_n\\i_s < i_{s+1} \text{ if } s \in D(\alpha)}} x_{i_1}x_{i_2} \cdots x_{i_n}.\]
In terms of the monomial basis,
\[L_\alpha = \sum_{\beta \preceq \alpha}M_\beta,\]
where the sum runs over all refinements $\beta$ of $\alpha$.  (A composition $\beta$ is a \emph{refinement} of $\alpha$ if $\alpha$ can be obtained by adding together adjacent parts of the composition $\beta$.)


For any labeled poset $P$ (on the ground set $[n]$), $K_{P}(\mathbf{x})$ is a quasisymmetric function, and we can express it in terms of the fundamental basis $\{L_\alpha\}$ using the linear extensions of $P$. For any linear extension $\pi \in \mathcal L(P)$, define the \emph{descent set} of $\pi$ to be $\des(\pi) = \{i \mid \pi(i) > \pi(i+1)\}$. We abbreviate $\co(\des(\pi))$ by $\co(\pi)$.

\begin{thm}[\cite{Stanley}]\label{L Expansion}
Let $P$ be a (labeled) poset on $[n]$.  Then
\[ K_{P}(\mathbf{x}) = \sum_{\pi \in \mathcal{L}(P)}L_{\co(\pi)}.\]
\end{thm}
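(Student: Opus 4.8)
The plan is to prove this identity (Stanley's fundamental theorem of $P$-partitions) by partitioning the set of all $(P, \omega)$-partitions according to the linear extensions of $P$. For each $\pi \in \mathcal{L}(P)$, let $A_\pi$ be the set of maps $\sigma \colon P \to \Z^+$ with
\[\sigma(\pi(1)) \leq \sigma(\pi(2)) \leq \cdots \leq \sigma(\pi(n)), \qquad \text{and } \sigma(\pi(i)) < \sigma(\pi(i+1)) \text{ whenever } i \in \des(\pi).\]
The central claim, which I would establish first, is that the set of $(P, \omega)$-partitions is the disjoint union $\bigsqcup_{\pi \in \mathcal{L}(P)} A_\pi$; the theorem then follows by evaluating $\sum_{\sigma \in A_\pi} \mathbf{x}^\sigma$ for each $\pi$ and summing over $\pi$.

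To see that each $(P, \omega)$-partition $\sigma$ lies in some $A_\pi$, I would totally order the ground set by declaring $x <_\sigma y$ when either $\sigma(x) < \sigma(y)$, or $\sigma(x) = \sigma(y)$ and $x < y$ as integers. Conditions (a) and (b) in the definition of a $(P, \omega)$-partition are exactly what is needed to check that $<_\sigma$ refines $\prec$, so $<_\sigma$ defines a linear extension, say $\pi$. Along $\pi$, the map $\sigma$ is then weakly increasing, and strictly increasing wherever consecutive labels form a descent, so $\sigma \in A_\pi$. Applying the same reasoning in reverse — if $\sigma \in A_{\pi'}$, then consecutive entries of $\pi'$ must be $<_\sigma$-increasing, which forces $\pi' = \pi$ — gives uniqueness, and hence disjointness of the union.

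Conversely, I would check that every $\sigma \in A_\pi$ is a genuine $(P, \omega)$-partition. Condition (a) is immediate, since $x \preceq y$ implies $\pi^{-1}(x) \leq \pi^{-1}(y)$ and $\sigma$ is weakly increasing along $\pi$. For condition (b), the key lemma — which I expect to be the main obstacle — is that if $x \prec y$ in $P$ with $x > y$ as integers, then $\pi$ has a descent at some position $i$ with $\pi^{-1}(x) \leq i < \pi^{-1}(y)$; otherwise the labels $\pi(\pi^{-1}(x)), \pi(\pi^{-1}(x)+1), \dots, \pi(\pi^{-1}(y))$ would be strictly increasing, contradicting $x > y$. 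Such a descent yields $\sigma(x) \leq \sigma(\pi(i)) < \sigma(\pi(i+1)) \leq \sigma(y)$, which is condition (b).

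Finally, writing $\mathbf{x}^\sigma = \prod_{p \in P} x_{\sigma(p)}$ and substituting $i_s = \sigma(\pi(s))$ identifies the maps in $A_\pi$ with weakly increasing sequences $1 \leq i_1 \leq \cdots \leq i_n$ that strictly increase at each position of $\des(\pi) = D(\co(\pi))$. Hence $\sum_{\sigma \in A_\pi} \mathbf{x}^\sigma = L_{\co(\pi)}$ directly from the definition of the fundamental basis, and summing over $\pi \in \mathcal{L}(P)$ together with the disjoint decomposition gives $K_P(\mathbf{x}) = \sum_{\pi \in \mathcal{L}(P)} L_{\co(\pi)}$.
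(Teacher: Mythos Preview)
Your proof is correct and is essentially Stanley's classical argument. Note, however, that the paper does not supply its own proof of this theorem: it is stated with a citation to \cite{Stanley} and used as a known result. So there is no ``paper's proof'' to compare against; your write-up is the standard decomposition-by-linear-extensions proof that one finds in Stanley's treatment.
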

In other words, the descent sets of the linear extensions of $P$ determine its $P$-partition generating function.  

\subsection{Antichains and Shape}
An \textit{antichain} is a subset $A$ of a poset $P$ such that any two elements of $A$ are incomparable.
The antichain structure of a naturally labeled poset $P$ plays an important role in determining which sets can appear as descent sets for linear extensions of $P$.  Since $P$ is naturally labeled, elements $i < j$ form an antichain in $P$ if and only if there exists a linear extension of $P$ in which $j$ appears immediately before $i$.  This means that every descent in a linear extension of $P$ is formed by a $2$-element antichain.  Similarly, if there is a linear extension of $P$ that has $i$ consecutive descents, then these elements form an $(i+1)$-element antichain in $P$.  This shows that the sizes of some of the antichains of $P$ can be obtained from $K_P(\mathbf{x})$.

The following theorem of Greene \cite{GREENE} (that generalizes Dilworth's theorem \cite{Dilworth}) defines an important invariant related to the chains and antichains of a poset.

\begin{thm}[The Duality Theorem for Finite Partially Ordered Sets, \cite{GREENE}] \label{duality}  For any finite poset $P$, there exists a partition $\lambda = (\lambda_1, \lambda_2, \dots)$ such that $\lambda_1 + \lambda_2 + \cdots + \lambda_k$ is the maximum cardinality of the union of $k$ chains in $P$ for all $k \geq 1$.
	
Moreover, if $\lambda' = (\lambda_1', \lambda_2', \dots)$ is the conjugate partition to $\lambda$, then $\lambda_1' + \lambda_2' + \cdots + \lambda_k'$ is the maximum cardinality of the union of $k$ antichains in $P$ for all $k \geq 1$.
\end{thm}

The \emph{shape} of a finite poset $P$ is therefore defined to be the partition $\lambda$ that satisfies Theorem~\ref{duality}.
%

The \textit{width} of a poset $P$ is the length of its longest antichain.  If $\sh(P) = \lambda$, then the width of $P$ is $\lambda_1'$.

\subsection{Poset of Order Ideals}
An \emph{order ideal}, or \emph{ideal} for short, is a subset $I \subseteq P$ such that if $x \in I$ and $y \prec x$, then $y \in I$.  There is a one-to-one correspondence between ideals and antichains, namely, the maximal elements of an ideal form an antichain.  A \emph{principal order ideal} is an ideal with a unique maximal element.  The dual notion of an order ideal is a \emph{filter}: it is a subset $J \subseteq P$ such that if $x \in J$ and $y \succ x$, then $y \in J$.

The set of all order ideals of $P$, ordered by inclusion, forms a poset that we will denote $J(P)$.  In fact, $J(P)$ is a finite (graded) distributive lattice. The rank of an element of $J(P)$ is the number of elements in the corresponding ideal of $P$.


If $J(P)$ has a unique element of some rank $k$, then $P$ can be expressed as an \emph{ordinal sum} $P = Q \oplus R$ with $|Q|=k$.  Here, the ordinal sum $Q \oplus R$ is the poset on the disjoint union $Q \sqcup R$ with relations $x\preceq y$ if and only if $x \preceq_Q y$, $x \preceq_R y$, or $x \in Q$ and $y \in R$.

\begin{defn}
A finite poset $P$ is \emph{irreducible} if $P = Q \oplus R$ implies that either $Q=\varnothing$ or $R=\varnothing$.
\end{defn}

Each poset has a unique \emph{ordinal sum decomposition} $P = P_1 \oplus P_2 \oplus \dots \oplus P_k$ with $P_i$ irreducible. If $|P_i|=n_i$, then $J(P)$ has exactly one element in ranks $0$, $n_1$, $n_1 + n_2$, \dots,  $n_1 + n_2 + \dots + n_k$.

\begin{lemma}\label{K of reducible}
Suppose $P$ and $Q$ have ordinal sum decompositions $P = P_1 \oplus P_2 \oplus \dots \oplus P_k$ and $Q = Q_1 \oplus Q_2 \oplus \dots \oplus Q_j$.  If $\KP = \KQ$, then $k = j$, and $K_{P_i}(\mathbf{x}) = K_{Q_i}(\mathbf{x})$ for $i = 1, \dots, k$.
\end{lemma}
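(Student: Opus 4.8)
The plan is to extract two kinds of data from $\KP$: the sizes of the irreducible summands $P_i$, and then each generating function $K_{P_i}(\x)$ separately. The structural fact underlying the second step is that an ordinal sum of posets corresponds to \emph{near-concatenation} of descent compositions.

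First I would recover the rank sizes of the lattice $J(P)$ from $\KP$. Since $P$ is naturally labeled, condition (b) in the definition of a $(P,\omega)$-partition never applies, so a $(P,\omega)$-partition is just an order-preserving map $P\to\Z^+$; in particular order-preserving maps $P\to\{1,2\}$ correspond bijectively to order ideals of $P$ via $\sigma\mapsto\sigma^{-1}(1)$. Setting all but the first two variables equal to $0$ therefore gives
\[
\KP(x_1,x_2,0,0,\dots)=\sum_{i=0}^{n}r_i\,x_1^i\,x_2^{n-i},
\]
where $n=|P|$ and $r_i$ is the number of order ideals of $P$ of size $i$, i.e.\ the number of rank-$i$ elements of $J(P)$. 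Hence $n$ and all the $r_i$ are determined by $\KP$. By the facts recalled above about ordinal sum decompositions, $J(P)$ has a unique element of rank $i$ precisely when $i\in\{0,n_1,n_1+n_2,\dots,n\}$ with $n_i=|P_i|$; that is, $\{\,i:r_i=1\,\}=\{0,n_1,\dots,n\}$. So $\KP=\KQ$ forces $P$ and $Q$ to share the same $n$ and the same such set, whence $k=j$ and $|P_i|=|Q_i|$ for all $i$.

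Next I would show that $K_R(\x)$ together with the single number $|A|$ determines $K_A(\x)$ and $K_B(\x)$ whenever $R=A\oplus B$. Fix the natural labeling of $R$ in which $A$ carries the labels $\{1,\dots,a\}$, so $b=|B|$ and $n=a+b$. Since every element of $A$ precedes every element of $B$, concatenation is a bijection $\mathcal{L}(A)\times\mathcal{L}(B)\to\mathcal{L}(R)$, $(\pi_A,\pi_B)\mapsto\pi_A\pi_B$; moreover position $a$ of $\pi_A\pi_B$ holds a label $\le a$ while position $a+1$ holds a label $>a$, so $\des(\pi_A\pi_B)=\des(\pi_A)\cup\{a+s:s\in\des(\pi_B)\}$ and $a\notin\des(\pi_A\pi_B)$; passing to compositions yields $\co(\pi_A\pi_B)=\co(\pi_A)\odot\co(\pi_B)$. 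By Theorem~\ref{L Expansion},
\[
K_R(\x)=\sum_{\pi_A\in\mathcal{L}(A)}\ \sum_{\pi_B\in\mathcal{L}(B)}L_{\co(\pi_A)\odot\co(\pi_B)}.
\]
A composition $\gamma$ of $n$ admits at most one factorization $\gamma=\delta\odot\epsilon$ with $\delta$ a composition of $a$: one exists exactly when $a$ is not a partial sum of $\gamma$, in which case $\delta$ and $\epsilon$ come from splitting the part of $\gamma$ whose span contains $a$. Comparing coefficients of $L_{\delta\odot\epsilon}$ thus gives $[L_{\delta\odot\epsilon}]K_R(\x)=\bigl([L_\delta]K_A(\x)\bigr)\bigl([L_\epsilon]K_B(\x)\bigr)$ for all compositions $\delta$ of $a$ and $\epsilon$ of $b$. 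Since the identity is the unique linear extension of $B$ with empty descent set, $[L_{(b)}]K_B(\x)=1$; taking $\epsilon=(b)$ gives the explicit formula $[L_\delta]K_A(\x)=[L_{\delta\odot(b)}]K_R(\x)$, and symmetrically $[L_\epsilon]K_B(\x)=[L_{(a)\odot\epsilon}]K_R(\x)$, so $K_A$ and $K_B$ are recovered from $K_R$ and $a$.

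Finally I would combine these. For $k\ge 2$, apply the second step with $R=P=P_1\oplus(P_2\oplus\dots\oplus P_k)$, $A=P_1$, $B=P_2\oplus\dots\oplus P_k$ (and analogously for $Q$): this shows $K_{P_1}(\x)$ and $K_{P_2\oplus\dots\oplus P_k}(\x)$ depend only on $\KP$ and $|P_1|$. Since $\KP=\KQ$ and $|P_1|=|Q_1|$, we obtain $K_{P_1}(\x)=K_{Q_1}(\x)$ and $K_{P_2\oplus\dots\oplus P_k}(\x)=K_{Q_2\oplus\dots\oplus Q_k}(\x)$; the last two posets have equal $P$-partition generating functions and only $k-1$ irreducible summands, so the claim follows by induction on $k$ (the cases $k\le 1$ being immediate). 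I expect the second step to be the main obstacle: one must pin down that ordinal sum is exactly near-concatenation of descent compositions — in particular that the ascent at the junction keeps position $a$ out of every descent set — and that the grading by size makes this near-concatenation uniquely invertible. The first step is routine bookkeeping once the order-ideal interpretation is in place.
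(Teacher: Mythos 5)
Your proof is correct. It follows the same two-step outline as the paper's --- first recover $k$ and the block sizes $n_i$ from $\KP$, then project onto each summand --- but the first step is carried out by a genuinely different mechanism. The paper determines the $n_i$ by analyzing which positions can be descents of linear extensions: it shows no descent occurs at a partial sum $n_1+\dots+n_m$, and for every other position $r$ it explicitly constructs a linear extension with a descent there (choosing an ideal of size $r$ whose largest label is maximal and performing a swap). You instead read off the number $r_i$ of order ideals of each size from the two-variable specialization $K_P(x_1,x_2,0,0,\dots)$ (equivalently, the coefficients of $M_{(i,\,n-i)}$) and use that $r_i=1$ exactly at the partial sums; this is shorter and avoids the combinatorial construction, at the mild cost of invoking uniqueness of the irreducible ordinal sum decomposition to get the ``only if'' direction of that equivalence, which does follow from the preliminaries. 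For the second step, your coefficient identity $[L_\delta]K_A(\x)=[L_{\delta\odot(b)}]K_R(\x)$ is exactly the paper's projection $\pi_i$ specialized to a two-block split; you peel off one summand at a time and induct on $k$, and you supply the justifications (uniqueness of the near-concatenation factorization of a composition at position $a$, and $[L_{(b)}]K_B(\x)=1$) that the paper leaves implicit in the line ``It follows that $K_{P_i}(\mathbf{x})=\pi_i(\KP)$.'' Both versions are sound.
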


\begin{proof}


We first show that if $n_i = |P_i|$, then 
\[ \bigcup_{\pi \in \mathcal{L}(P)} \des(\pi) = [n-1]\setminus \{n_1, n_1 + n_2, \dots, n_1+n_2+ \dots + n_{k-1}\}.\]

Since $P$ is naturally labeled, if elements $a$ and $b$ form a descent in a linear extension of $P$, then $a$ and $b$ must both lie in the same $P_i$ for some $i$.  This means that no linear extension of $P$ has a descent in the locations $n_1$, $n_1 + n_2$, \dots, $n_1+n_2+ \dots + n_{k-1}$.
For any other $r \in [n-1]$,
let $I$ be an ideal of $P$ of size $r$ containing an element whose label $x$ is as large as possible. Since $I$ is not an ordinal summand of $P$, some minimal element $y$ of $P \setminus I$ is not greater than $x$ in $P$. Then $(I \cup \{y\}) \setminus \{x\}$ is also an ideal of $P$ of size $r$, so we must have $y < x$ by our choice of $I$. Hence there is a linear extension of $P$ with a descent in location $r$ (that begins with the elements of $I$ ending with $x$, followed by $y$).

Since we can determine all possible descent sets of linear extensions of $P$ from the expansion of $\KP$ in the fundamental basis by Theorem~\ref{L Expansion}, we can thereby determine $k$ and all $n_i=|P_i|$ from $\KP$.

To get $K_{P_i}(\mathbf{x})$ from $\KP$, note that linear extensions of $P$ can be broken up into $k$ parts: the first $n_1$ elements form a linear extension of $P_1$, the next $n_2$ elements form a linear extension of $P_2$, and so on. Then define $\pi_i \colon \QSym_n \rightarrow \QSym_{n_i}$ by
\[
\pi_i(L_{\alpha}) =
\begin{cases}
L_{\beta} &\text{if } \alpha = (n_1 + \dots + n_{i-1})\odot \beta \odot (n_{i+1}+ \dots +n_k), \\
0 &\text{otherwise.}
\end{cases}
\]
extended linearly. It follows that $K_{P_i}(\mathbf{x}) = \pi_i(\KP)$.
\end{proof}

For any $S \subseteq [n]$, define the subposet $J(P)_S = \{I \in J(P) \mid |I| \in S\}$.  Let $f_S$ denote the number of maximal chains in $J(P)_S$.  The function $f\colon 2^{[n]} \rightarrow \mathbb{Z}$ is called the \textit{flag $f$-vector} of $J(P)$.  Also define $h_S$ by
\[ h_S = \sum_{T \subseteq S} (-1)^{|S-T|}f_T. \]
This function $h$ is called the \textit{flag $h$-vector} of $J(P)$.

In the case when $P$ is naturally labeled, the flag $f$-vector $f_S$ and the flag $h$-vector $h_S$ of $J(P)$ appear in the expansion of $K_P(\mathbf{x})$ as follows:
\[K_P(\mathbf{x}) = \sum_{\alpha}f_{D(\alpha)}M_\alpha = \sum_{\alpha}h_{D(\alpha)} L_\alpha.\]

The \textit{L-support of $\KP$} is defined by 
\[\supp_L(\KP) = \{ \alpha \mid h_{D(\alpha)} \neq 0\}.\]

\subsection{Hopf Algebra}
Let $\mathcal{J}$ denote the set of all finite distributive lattices up to isomorphism. The free $\C$-module $\C[\mathcal{J}]$, whose basis consists of isomorphism classes of distributive lattices $[J] \in \mathcal{J}$, can be given a Hopf algebra structure known as the \textit{reduced incidence Hopf algebra}. The multiplication, unit, comultiplication, and counit are defined as follows:
\begin{align*}
\nabla([J_1] \otimes [J_2])&:= [J_1 \times J_2],\\
1_{\C[\mathcal{J}]} &:= [o],\\
\Delta([J]) &:= \sum_{x \in J} [\hat{0}, x] \otimes [x, \hat{1}],\\
\epsilon([J]) &:= 
\begin{cases}
            1 & \text{if } |J|=1, \\
            0 & \text{otherwise.}
\end{cases}
\end{align*}
Here $[o]$ is the isomorphism class of the one-element lattice, and $\hat 0$ and $\hat 1$ are the minimum and maximum elements of a lattice.

In fact, the reduced incidence Hopf algebra can be made into a combinatorial Hopf algebra after choosing an appropriate character.  A \emph{combinatorial Hopf algebra} $\mathcal{H}$ is a graded connected Hopf algebra over a field $\C$ equipped with a character (multiplicative linear function) $\zeta \colon \mathcal{H} \rightarrow \C$ (see  \cite{ABS} for more details).  We define the character of the reduced incidence Hopf algebra to be the map $\zeta \colon \C[\mathcal{J}] \rightarrow \C$ defined on basis elements by $\zeta([J]) = 1$ for all $J$ and extended linearly.

These functions can likewise be defined on the free $\C$-module $\C[\mathcal{P}]$ whose basis consists of isomorphism classes of finite posets.  Explicitly:
\begin{align*}
\nabla([P_1] \otimes [P_2])&:= [P_1 \sqcup P_2],\\
1_{\C[\mathcal{P}]} &:= \varnothing,\\
\Delta([P]) &:= \sum_{\text{ideal } I \subseteq P} [I] \otimes [P \setminus I],\\
\epsilon([P]) &:= \begin{cases}
            1 & \text{if } |P|=0, \\
            0 & \text{otherwise.}
        \end{cases}
\end{align*}

The corresponding character of $\C[\mathcal{P}]$ is $\zeta_{\mathcal{P}}\colon  \C[\mathcal{P}] \rightarrow \C$ defined by $\zeta([P]) = 1$ for all $P$, extended linearly.
These functions are all compatible with the map $J$ that sends $[P]$ to $[J(P)]$, so $J$ is a Hopf isomorphism between $\C[\mathcal P]$ and $\C[\mathcal J]$. (For more information on these Hopf algebras, see \cite{Schmitt}.)

We define the \textit{graded comultiplication} $\Delta_{k, n-k}([P])$ to be the part of $\Delta([P])$ of bidegree $(k, n-k)$, that is,
\[\Delta_{k, n-k}([P]) := \sum_{\substack{I \subseteq P \\ |I| = k}} [I] \otimes [P \setminus I].\]

The ring of quasisymmetric functions $\QSym$ is also a Hopf algebra.  The comultiplication is defined on the fundamental quasisymmetric function basis by
\[\Delta(L_{\alpha}) := \sum_{\substack{(\beta, \gamma) \\ \alpha = \beta \cdot \gamma \text{ or } \beta \odot \gamma}} L_{\beta} \otimes L_{\gamma}.\]
The \textit{graded comultiplication} $\Delta_{k, n-k}(L_{\alpha})$ is given  by
\[\Delta_{k, n-k}(L_{\alpha}) := \sum_{\substack{(\beta, \gamma) \\\alpha = \beta \cdot \gamma \text{ or } \beta \odot \gamma \\ |\beta| = k}} L_{\beta} \otimes L_{\gamma}.\]

The map $K\colon \C[\mathcal{P}] \rightarrow \QSym$ that sends $P$ to the $P$-partition generating function $K_P(\mathbf{x})$ is the unique Hopf morphism that satisfies $\zeta_{\mathcal{P}} = \zeta_{\mathcal{Q}} \circ K$, where the character $\zeta_{\mathcal Q}$ for $\QSym$ is the linear function that sends $L_{(n)}$ to $1$ for all $n$ and all other $L_{\alpha}$ to $0$.


\section{Necessary Conditions}
In this section, we will describe various necessary conditions for two naturally labeled posets to have the same partition generating function.

\subsection{Order ideals and antichains}
Let $\antikij$ be the function that sends a poset $P$ to the number of $k$-element ideals $I$ of $P$ such that $I$ has $i$ maximal elements and $P \setminus I$ has $j$ minimal elements.  This is equal to the number of rank $k$ elements of $J(P)$ that cover $i$ elements and are covered by $j$ elements. We will show that if $\KP = \KQ$, then $\antikij(P) = \antikij(Q)$ for all $k$, $i$, and $j$.

First we will need the following lemmas.

\begin{lemma}\label{maxcount}
Let $P$ be a naturally labeled finite poset.
\begin{enumerate}[(a)]
\item If $P$ has exactly $j$ maximal elements, then there are $\binom{j-1}{k}$ linear extensions of $P$ whose descent set is $\{n-k, n-k+1, \dots, n-1\}$.
\item If $P$ has exactly $j$ minimal elements, then there are $\binom{j-1}{k}$ linear extensions of $P$ whose descent set is $\{1, 2, \dots , k\}$.
\end{enumerate}
\end{lemma}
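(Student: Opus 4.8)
The plan is to prove part (a) directly by a counting argument and then deduce part (b) by an appeal to duality. For part (a), the key observation is that a linear extension $\pi$ of $P$ has descent set contained in $\{n-k, n-k+1, \dots, n-1\}$ precisely when its first $n-k$ entries are weakly... well, more precisely: the descent set being \emph{exactly} $\{n-k, \dots, n-1\}$ forces the initial segment $\pi(1) < \pi(2) < \cdots < \pi(n-k)$ to be increasing and the final $k+1$ entries $\pi(n-k) > \pi(n-k+1) > \cdots > \pi(n)$ to be strictly decreasing. So such a linear extension is determined by choosing which $k+1$ labels occupy the last $k+1$ positions (they then must be listed in decreasing order) and listing the remaining $n-k-1$ labels in increasing order up front. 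The constraint that this word is an actual linear extension of a naturally labeled $P$ is what pins down the count.

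First I would show that if $\pi$ has descent set exactly $\{n-k,\dots,n-1\}$, then the last $k+1$ entries $\pi(n-k) > \cdots > \pi(n)$ form an antichain in $P$ whose top element $\pi(n-k)$ is in fact \emph{comparable-from-below} appropriately; the real content is that the last $k$ positions $\pi(n-k+1), \dots, \pi(n)$ must all be \emph{maximal} elements of $P$. Indeed, since $P$ is naturally labeled and each of these is smaller (as an integer) than $\pi(n-k)$ which comes before it, none of them can lie below any later element; and since they come at the very end, none lies below anything at all except possibly earlier entries — but an element $x = \pi(s)$ for $s > n-k$ that is not maximal has some $y \succ x$ in $P$, and $y$ must appear after $x$ in $\pi$, contradiction. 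So $\pi(n-k+1), \dots, \pi(n)$ are $k$ distinct maximal elements, listed in decreasing order, and conversely the first $n-k$ entries (the complementary elements, in increasing order) automatically form a valid initial run since increasing integer order respects $\prec$ for a naturally labeled poset and removing a down-set of maximal elements leaves an ideal. The subtlety is the position $n-k$ itself: we need a descent at $n-k$, i.e. $\pi(n-k) > \pi(n-k+1)$, and we need $\pi(n-k)$ to be placeable there, meaning $\pi(n-k)$ together with the chosen maximal elements forms a legitimate tail.

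The cleanest way to organize the count: I claim the linear extensions with descent set exactly $\{n-k,\dots,n-1\}$ are in bijection with the $k$-element subsets $S$ of the maximal elements of $P$ such that $P$ has a maximal element \emph{not} in $S$ that is larger (as an integer) than $\min(S)$ — equivalently, one checks that writing $\max(P\setminus S')$ where $S'$ ranges suitably gives exactly $\binom{j-1}{k}$ choices. Concretely: order the $j$ maximal elements of $P$ as $m_1 < m_2 < \cdots < m_j$. A valid tail of length $k+1$ with a descent at its start must have its last $k$ entries be maximal elements and its $(k+1)$st-from-the-end entry $\pi(n-k)$ be strictly larger than the first of them; it is not hard to see $\pi(n-k)$ can be taken to be $m_j$ exactly when $m_j \notin S$, and in general the valid configurations biject with $k$-subsets of $\{m_1, \dots, m_{j-1}\}$ by a shift argument (the largest maximal element always being available to serve as $\pi(n-k)$ or forced into the descent run). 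Carrying out this bijection carefully — and confirming it yields $\binom{j-1}{k}$ — is the main obstacle; everything else is bookkeeping about natural labelings. Part (b) then follows by applying part (a) to the dual poset $P^*$ (which is also naturally labeled after relabeling $x \mapsto n+1-x$), using the fact that descent sets of linear extensions of $P^*$ are the complementary-reversed descent sets of those of $P$, so the descent set $\{n-k,\dots,n-1\}$ for $P^*$ corresponds to $\{1,2,\dots,k\}$ for $P$, and minimal elements of $P$ are maximal elements of $P^*$.
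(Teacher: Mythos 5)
Your structural analysis is correct up to a point: the last $k$ entries of such a linear extension must be maximal elements listed in decreasing order, the first $n-k$ entries are the complementary labels in increasing order, and your reduction of (b) to (a) via the dual poset is fine. But the actual count is left genuinely open --- you say yourself that carrying out the ``shift argument'' bijection is ``the main obstacle'' --- and the characterization of valid tails you do write down is incorrect. You claim the valid $k$-subsets $S$ of maximal elements are those for which some maximal element not in $S$ exceeds $\min(S)$; the relevant comparison is with $\max(S)$ (one needs $\pi(n-k) > \pi(n-k+1) = \max(S)$), and as stated your condition would admit, for instance, a subset containing the label $n$ together with a small maximal element, which cannot yield a valid extension.

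The missing observation that collapses all of this is that $\pi(n-k)$ is not a choice at all: it is greater (as an integer) than every entry before it, since the initial run is increasing, and greater than every entry after it, since the final run is decreasing; hence $\pi(n-k) = n$. Moreover $n$ is automatically a maximal element of any naturally labeled poset (if $n \prec y$ then $n < y$, which is impossible). So the valid tails are exactly: the label $n$, followed by an arbitrary $k$-element subset of the remaining $j-1$ maximal elements written in decreasing order. This gives $\binom{j-1}{k}$ immediately, with no shift or auxiliary bijection; the converse direction (each such subset does produce a linear extension with exactly the prescribed descent set) goes through as you describe. This is precisely the paper's argument, so once you insert the observation $\pi(n-k)=n$ your proof closes up and coincides with it.
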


\begin{proof}
Let $\sigma = \sigma_1\sigma_2\cdots\sigma_n$ be a linear extension of $P$, and suppose $\des(\sigma) = \{n-k, n-k+1, \dots, n-1\}$.  It follows that 
\[\sigma_1<\sigma_2< \dots < \sigma_{n-k} > \sigma_{n-k+1} > \dots > \sigma_n.\]
This implies that $\sigma_{n-k} = n$ and $\{\sigma_{n-k+1}, \dots, \sigma_n\}$ must be maximal elements,  for if $\sigma_i$ is not maximal for $i>n-k$, then there is some $\sigma_j \succ \sigma_i$ in $P$ (with $j > i>n-k$ since $\sigma$ is a linear extension). But since $P$ is naturally labeled, this would imply $\sigma_i < \sigma_j$, which is impossible.  Therefore  $\{\sigma_{n-k+1}, \dots, \sigma_n\}$ is a $k$-element subset of the maximal elements of $P$ other than $n$.  There are $\binom{j-1}{k}$ such subsets, and each corresponds to a linear extension with the desired descent set.


The proof for (b) follows similarly.
\end{proof}

\begin{lemma}\label{maxmin}
\begin{enumerate}[(a)]
\item
There exists a linear function $\max_i\colon \QSym \rightarrow \mathbb{C}$ satisfying
\[
\max_i\nolimits(K_P(\mathbf{x})) = \begin{cases}
            1 & \text{if $P$ has exactly i maximal elements,} \\
            0 & \text{otherwise.}
        \end{cases}
\]
\item
There exists a linear function $\min_i\colon \QSym \rightarrow \mathbb{C}$ satisfying
\[
  \min_i\nolimits(K_P(\mathbf{x})) =
        \begin{cases}
            1 & \text{if $P$ has exactly i minimal elements,} \\
            0 & \text{otherwise.}
        \end{cases}
\]
\end{enumerate}
\end{lemma}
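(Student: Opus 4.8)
The plan is to read off the number of maximal (resp.\ minimal) elements of $P$ directly from the coefficients of $K_P(\mathbf x)$ in the fundamental basis, using Lemma~\ref{maxcount}. By Theorem~\ref{L Expansion} the coefficient of $L_\alpha$ in $K_P(\mathbf x)$ is the number of linear extensions $\pi$ of $P$ with $\des(\pi)=D(\alpha)$ (equivalently, it is the flag $h$-vector entry $h_{D(\alpha)}$ of $J(P)$). Now $D\bigl((n-k,1^k)\bigr)=\{n-k,n-k+1,\dots,n-1\}$, so Lemma~\ref{maxcount}(a) says: if $P$ is naturally labeled on $[n]$ with exactly $j$ maximal elements, then the coefficient of $L_{(n-k,1^k)}$ in $K_P(\mathbf x)$ equals $\binom{j-1}{k}$ for each $k=0,1,\dots,n-1$. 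Thus the vector $\bigl(\binom{j-1}{k}\bigr)_{k=0}^{n-1}$, all of whose entries are read off linearly from $K_P(\mathbf x)$, determines $j$, and it remains only to package this into a single linear functional.

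For each $n$ and each $0\le k\le n-1$, let $\ell_{n,k}\colon\QSym_n\to\C$ extract the coefficient of $L_{(n-k,1^k)}$; this is well defined and linear because $\{L_\alpha:\alpha\in\Comp_n\}$ is a basis of $\QSym_n$. Since the matrix $\bigl(\binom{j-1}{k}\bigr)_{1\le j\le n,\,0\le k\le n-1}$ is unitriangular (its $k=j-1$ entry is $1$ and it vanishes for $k>j-1$), the functionals $\ell_{n,0},\dots,\ell_{n,n-1}$ can be combined so as to send $K_P(\mathbf x)$ to $1$ exactly when $P$ has $i$ maximal elements; explicitly one can take
\[
\max_i\nolimits(g)=\sum_{k=0}^{n-1}(-1)^{k-i+1}\binom{k}{i-1}\,\ell_{n,k}(g)\qquad\text{for }g\in\QSym_n,
\]
and then extend linearly over the grading (so $\max_i$ is $0$ on $\QSym_n$ for $n<i$, and on $\QSym_0$). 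Evaluating on $K_P(\mathbf x)$ for $P$ with $j$ maximal elements reduces the claim to the identity $\sum_k(-1)^{k-i+1}\binom{k}{i-1}\binom{j-1}{k}=\delta_{ij}$. Part~(b) is proved in the same way, replacing $L_{(n-k,1^k)}$ with $L_{(1^k,n-k)}$ (whose descent set is $\{1,2,\dots,k\}$) and invoking Lemma~\ref{maxcount}(b) in place of~(a).

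The only computation — and essentially the only place any care is needed — is the displayed binomial identity, and it is routine: the subset-of-a-subset rewriting $\binom{k}{i-1}\binom{j-1}{k}=\binom{j-1}{i-1}\binom{j-i}{k-i+1}$ turns the sum into $\binom{j-1}{i-1}\sum_{t\ge 0}(-1)^t\binom{j-i}{t}=\binom{j-1}{i-1}(1-1)^{j-i}$, which is $1$ if $i=j$ and $0$ otherwise (and $\binom{j-1}{i-1}=0$ already when $j<i$). If one prefers, the explicit formula can be skipped entirely, since unitriangularity of $\bigl(\binom{j-1}{k}\bigr)$ already guarantees the desired functional exists on each $\QSym_n$. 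The one bookkeeping point is that $\max_i$ and $\min_i$ must be single functionals on all of $\QSym$, so one checks that the degree-$n$ recipe is consistent for every $n\ge i$ and automatically returns $0$ when $n<i$ (since then $P$ has at most $n<i$ maximal elements); this is immediate. I therefore do not anticipate a genuine obstacle here — the real content is in Lemma~\ref{maxcount}, which has already been established.
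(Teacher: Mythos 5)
Your proposal is correct and follows essentially the same route as the paper: both define $\max_i$ by assigning $L_{(n-k,1^k)}$ (the paper's $\alpha(k)=(n-k-1)\odot 1^{k+1}$) the coefficient $(-1)^{k-i+1}\binom{k}{i-1}$ and zero to all other fundamentals, then verify via Lemma~\ref{maxcount} and the binomial identity $\sum_k(-1)^{k-i+1}\binom{k}{i-1}\binom{j-1}{k}=\delta_{ij}$. The additional unitriangularity remark is a harmless bonus that the paper omits.
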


\begin{proof}
We claim that the following function defined on the basis $\{L_\alpha\}$ of $\QSym_n$, extended linearly, satisfies this condition:
\[
\max_i\nolimits(L_{\alpha}) = 
	\begin{cases}
            (-1)^{(k-i+1)}\binom{k}{i-1} &\text{if } \alpha = \alpha(k) := (n-k-1)\odot 1^{k+1} \text{ for } i-1 \leq k < n, \\
            0 &\text{otherwise.}
        \end{cases}
\]
By Theorem \ref{L Expansion}, 
$K_P(\mathbf{x}) = \sum_{\alpha}c_{\alpha}L_{\alpha}$, where $c_\alpha$ is the number of linear extensions of $P$ with descent set $D(\alpha)$.  Evaluating $\max_i$ on $K_P(\mathbf{x})$, we have

\[\max_i\nolimits(K_P(\mathbf{x})) = \max_i\nolimits\left(\sum_{\alpha}c_{\alpha}L_{\alpha}\right) = \sum_{\alpha}c_{\alpha}\max_i\nolimits(L_{\alpha}) = \sum_{k=0}^n c_{\alpha(k)} (-1)^{(k-i+1)}\binom{k}{i-1}.\]

Suppose $P$ has exactly $j$ maximal elements.  By Lemma \ref{maxcount}, $c_{\alpha(k)} = \binom{j-1}{k}$ because $D(\alpha(k)) = \{n-k, n-k+1, \dots, n-1\}$.  Substituting this equality into the summation we have:
\begin{align*}
\max_i\nolimits(\KP) & = \sum_{k=0}^n (-1)^{(k-i+1)}\binom{j-1}{k}\binom{k}{i-1} \\
&= \binom{j-1}{i-1}\sum_{k=0}^n (-1)^{(k-i+1)}\binom{j-i}{k-i+1} \\
& =\binom{j-1}{i-1} \delta_{i, j}\\
& =\delta_{i, j}.
\end{align*}

The proof of (b) follows similarly.
\end{proof}

It follows from Lemma~\ref{maxmin} that $\max_i(\KP)$ and $\min_i(\KP)$ can be expressed as a linear combination of the coefficients of the fundamental basis expansion of $\KP$.  Observe that if we order the compositions in lexicographic order, then the leading term in the expansion of $\max_i(\KP)$ is $(n-i)\odot 1^i$, and the leading term in the expansion of $\min_i(\KP)$ is $1^i \odot (n-i)$.  We will now use these linear functions along with the coproduct to express $\antikij(P)$ in terms of $\KP$.

\begin{thm}\label{Anti}
If $K_P(\mathbf{x}) = K_Q(\mathbf{x})$, then $\antikij(P) = \antikij(Q)$ for all triples $(k, i, j)$.
\end{thm}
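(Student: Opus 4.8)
The plan is to express $\antikij(P)$ as a scalar obtained from $\KP$ by combining the graded coproduct with the linear functionals $\max_i$ and $\min_j$ from Lemma~\ref{maxmin}. Recall that $\antikij(P)$ counts $k$-element ideals $I$ of $P$ such that $I$ has exactly $i$ maximal elements and $P\setminus I$ has exactly $j$ minimal elements. Since
\[
\Delta_{k,n-k}(\KP) = \sum_{\substack{I\subseteq P\\ |I|=k}} K_I(\x)\otimes K_{P\setminus I}(\x),
\]
(using that $K$ is a Hopf morphism, so it intertwines the poset coproduct with the $\QSym$ coproduct, and that $\Delta_{k,n-k}([P]) = \sum_{|I|=k}[I]\otimes[P\setminus I]$), I would apply $\max_i \otimes \min_j$ to both sides. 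By Lemma~\ref{maxmin}, $(\max_i\otimes\min_j)(K_I(\x)\otimes K_{P\setminus I}(\x))$ equals $1$ exactly when $I$ has $i$ maximal elements and $P\setminus I$ has $j$ minimal elements, and $0$ otherwise. Summing over all $k$-element ideals $I$ therefore yields precisely $\antikij(P)$. Hence
\[
\antikij(P) = (\max_i\nolimits\otimes\min_j\nolimits)\,\Delta_{k,n-k}(\KP),
\]
and since the right-hand side depends only on $\KP$, the hypothesis $\KP=\KQ$ immediately gives $\antikij(P)=\antikij(Q)$.

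The key steps, in order, are: (1) write down the graded coproduct $\Delta_{k,n-k}$ on $\C[\mathcal P]$ and transport it through the Hopf morphism $K$ to get the displayed sum over $k$-element ideals in $\QSym_k\otimes\QSym_{n-k}$; (2) invoke Lemma~\ref{maxmin}(a) and (b) to evaluate $\max_i$ on the first tensor factor and $\min_j$ on the second; (3) observe that the composite $(\max_i\otimes\min_j)\circ\Delta_{k,n-k}$ is a well-defined linear functional on $\QSym$, so its value on $\KP$ is determined by $\KP$ alone; (4) conclude equality of $\antikij$ for $P$ and $Q$. As a sanity check one can note, as remarked after the corollary, that this recovers the claim that $P$ and $Q$ have the same number of antichains of each size: an antichain of size $m$ corresponds to an ideal whose maximal elements are exactly that antichain, so summing $\anti_{k,m,*}$ appropriately (or using that the total number of ideals with $m$ maximal elements is an invariant) recovers McNamara and Ward's conjecture.

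I do not expect a serious obstacle here: the heavy lifting has already been done in Lemma~\ref{maxmin} (constructing $\max_i$ and $\min_j$ as explicit linear functionals on $\QSym$) and in the background fact that $K$ is a Hopf morphism compatible with taking order ideals. The only point requiring a little care is making sure that $\max_i$ and $\min_j$ are applied on the correct graded pieces $\QSym_k$ and $\QSym_{n-k}$, and that the functionals defined in Lemma~\ref{maxmin} on $\QSym_n$ extend/restrict consistently across all degrees (they are defined degree by degree, so one applies the degree-$k$ version of $\max_i$ to the first factor and the degree-$(n-k)$ version of $\min_j$ to the second). Once that bookkeeping is in place, the identity $\antikij(P) = (\max_i\otimes\min_j)\,\Delta_{k,n-k}(\KP)$ follows formally, and the theorem is immediate. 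If one wants an even more self-contained argument avoiding explicit mention of the Hopf morphism, one can instead write $\Delta_{k,n-k}(\KP)$ directly using Theorem~\ref{L Expansion} by splitting linear extensions of $P$ at position $k$, but invoking the Hopf-compatibility of $K$ is cleaner.
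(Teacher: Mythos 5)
Your proposal is correct and follows essentially the same route as the paper: both express $\antikij(P)$ as $(\max_i\otimes\min_j)\circ\Delta_{k,n-k}$ applied to $\KP$, using the Hopf-morphism property of $K$ to identify $\Delta_{k,n-k}(\KP)$ with the sum over $k$-element ideals and then evaluating via Lemma~\ref{maxmin}. No gaps; the bookkeeping about graded pieces that you flag is handled the same way in the paper's commutative diagram.
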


\begin{proof}
We will prove this result by finding a linear function that takes $\KP$ to $\antikij(P)$.

Recall that there is a Hopf morphism $K\colon \mathcal{P} \rightarrow \QSym$ that sends $P$ to $\KP$. 
It follows that $K$ is compatible with comultiplication, $(K \otimes K) \circ \Delta = \Delta \circ K$, and graded comultiplication, $(K \otimes K) \circ \Delta_{k, n-k} = \Delta_{k, n-k} \circ K$.

Define $\max_i\nolimits^*\colon \C[\mathcal{P}] \rightarrow \mathbb{\C}$ by $\max_i\nolimits^* = \max_i\nolimits \circ K$.  Thus $\max_i\nolimits^*(P) = 1$ if $P$ has exactly $i$ maximal elements, otherwise $\max_i\nolimits^*(P) = 0$.  Similarly define $\min_i\nolimits^* = \min_i\nolimits \circ K$.

Consider the following commutative diagram:

\begin{center}
\begin{tikzpicture}
  \matrix (m) [matrix of math nodes,row sep=3em,column sep=4em,minimum width=2em]
  {
     P & \sum P_{(1)} \otimes P_{(2)}&& \\
     K_P(\mathbf{x}) & \sum K_{P_{(1)}}(\mathbf{x}) \otimes K_{P_{(2)}}(\mathbf{x}) && \antikij(P) \\
     };
  \path[-stealth]
    (m-1-1) edge (m-2-1)
            edge node [above] {$\Delta_{k, n-k}$}(m-1-2)
    (m-1-1) edge node [left] {$K$} (m-2-1)
    (m-2-1.east|-m-2-2) edge 
            node [above] {$\Delta_{k, n-k}$} (m-2-2)
    (m-1-2) edge node [left] {$K \otimes K$} (m-2-2)
    (m-2-2) edge (m-2-4)
    	  edge node [below] {$\max_i \otimes \min_j$} (m-2-4)
    (m-1-2) edge (m-2-4)
    	 edge node [above right] {$\max_i^*\nolimits \otimes \min_j^*\nolimits$} (m-2-4)
    ;
\end{tikzpicture}
\end{center}

We can compute $\antikij(P)$ by evaluating the composition of the top row of functions on $P$ as
\[\antikij(P) = ((\max_i^*\nolimits \otimes \min_j^*\nolimits) \circ \Delta_{k, n-k})(P),\]
or equivalently we can compute $\antikij(P)$ by evaluating the composition of the bottom row of functions to $K_P(\mathbf{x})$ as
\[\antikij(P) = ((\max_i\nolimits \otimes \min_j\nolimits) \circ \Delta_{k, n-k})(K_P(\mathbf{x})).\]
This shows that $\antikij(P)$ only depends on $K_P(\mathbf{x})$. Therefore if two posets $P$ and $Q$ have the same partition generating function, then $\antikij(P) =\antikij(Q)$.
\end{proof}

In particular, by summing over $k$ and $j$, we arrive at the following corollary, conjectured by McNamara and Ward \cite{McNamaraWard}.

\begin{cor}
	If $\KP=\KQ$, then $P$ and $Q$ have the same number of antichains of each size.
\end{cor}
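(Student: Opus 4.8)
The plan is to deduce the corollary directly from Theorem~\ref{Anti} by an appropriate summation. The key observation is that antichains of $P$ of size $i$ are in bijection with order ideals $I$ of $P$ having exactly $i$ maximal elements: the bijection sends an ideal to the antichain of its maximal elements, and conversely an antichain $A$ to the ideal $\{x \in P : x \preceq a \text{ for some } a \in A\}$ it generates. So the number of antichains of size $i$ in $P$ equals the number of ideals of $P$ with $i$ maximal elements, with no constraint on the size of the ideal or on the complement.

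Concretely, I would argue as follows. Fix $i \geq 0$. Every order ideal $I$ of $P$ has some size $k$ with $0 \leq k \leq n$, and its complement $P \setminus I$ has some number $j$ of minimal elements with $0 \leq j \leq n$. Summing the quantity $\antikij(P)$ over all such $k$ and $j$ therefore counts each ideal of $P$ with exactly $i$ maximal elements exactly once, so
\[
\#\{\text{antichains of size } i \text{ in } P\} = \sum_{k=0}^{n} \sum_{j=0}^{n} \antikij(P).
\]
By Theorem~\ref{Anti}, if $\KP = \KQ$ then $\antikij(P) = \antikij(Q)$ for every triple $(k,i,j)$; summing over $k$ and $j$ as above gives that $P$ and $Q$ have the same number of antichains of size $i$. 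Since $i$ was arbitrary, the conclusion follows. (One should note the edge case $i = 0$: the empty antichain corresponds to the empty ideal, which is consistent, but in any case the claim for $i \geq 1$ is what is of interest and the argument is uniform.)

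There is essentially no obstacle here — the corollary is a bookkeeping consequence of the theorem. The only thing to be careful about is making the ideal/antichain correspondence explicit and checking that the ranges of summation genuinely cover every ideal with $i$ maximal elements exactly once, which is immediate since the size $k$ and the number $j$ of minimal elements of the complement are determined by the ideal. If one wanted to avoid even mentioning the correspondence, one could instead invoke the bottom row of the diagram in the proof of Theorem~\ref{Anti}: the linear functional $\sum_{k,j} (\max_i \otimes \min_j) \circ \Delta_{k,n-k}$ applied to $\KP$ computes the antichain count, and it depends only on $\KP$. Either phrasing is a two-line argument.
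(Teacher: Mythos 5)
Your proof is correct and is exactly the paper's intended argument: the paper derives this corollary from Theorem~\ref{Anti} precisely ``by summing over $k$ and $j$,'' using the standard bijection between antichains and order ideals (stated in the preliminaries) to identify antichains of size $i$ with ideals having $i$ maximal elements. Your write-up just makes that bookkeeping explicit.
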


We have just shown that $\antikij(P)$ is a linear function of $\KP$, so in particular, it can be expressed as a linear combination of certain coefficients of the fundamental basis expansion of $\KP$. In fact, $((\max_i \otimes \min_j) \circ \Delta_{k, n-k})(L_\alpha) = 0$ unless $\alpha$ is of the form $\alpha = (a) \odot 1^b \odot 1^c \odot (n-a-b-c)$, so $\antikij(P)$ only depends on the coefficients for these compositions in $\KP$. If we order the compositions in lexicographic order, then the leading coefficient of $\antikij(P)$ is $c_{\alpha(k, i, j)}(P)$, where
\[\alpha(k, i, j) = (k-i) \odot 1^i \odot 1^j \odot (n-k-j).\] One can then deduce the following result.
\begin{cor}

Let $c_\alpha(P)$ and $c_\alpha(Q)$ denote the coefficent of $L_\alpha$ in $\KP$ and $K_Q(\mathbf{x})$, respectively.  If $\antikij(P) = \antikij(Q)$ for all $k, i,j$, then $c_\alpha(P) = c_\alpha(Q)$ for all compositions $\alpha$ of the form $\alpha = (a) \odot 1^b \odot 1^c \odot (n-a-b-c)$.

\end{cor}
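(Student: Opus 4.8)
The plan is to exploit the triangularity (with respect to lexicographic order on compositions) of the linear functions constructed in the proof of Theorem~\ref{Anti}. We already know from that proof that $\antikij(P)$ equals the evaluation of $(\max_i \otimes \min_j) \circ \Delta_{k,n-k}$ on $\KP$, and that this functional, applied to a fundamental quasisymmetric function $L_\alpha$, is nonzero only when $\alpha$ has the special four-block form $\alpha = (a) \odot 1^b \odot 1^c \odot (n-a-b-c)$. The key observation is that as $(k,i,j)$ ranges over all admissible triples, the compositions $\alpha(k,i,j) = (k-i)\odot 1^i \odot 1^j \odot (n-k-j)$ range over \emph{exactly} the set of compositions of this special form, bijectively.

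First I would set up this bijection explicitly: given a composition $\alpha = (a)\odot 1^b \odot 1^c \odot (n-a-b-c)$ of the special form, recover $(k,i,j)$ by $i = b$, $j = c$, $k = a + b$ (so $a = k-i$ and $n-a-b-c = n-k-j$); one checks this is inverse to $(k,i,j)\mapsto \alpha(k,i,j)$. Next I would verify the triangularity claim already asserted in the paper: when one expands $(\max_i \otimes \min_j)\circ\Delta_{k,n-k}$ in the fundamental basis, the lexicographically largest composition $\alpha$ with nonzero coefficient is $\alpha(k,i,j)$, and its coefficient there is a nonzero scalar (indeed $\pm 1$, coming from the leading terms $(n-i)\odot 1^i$ of $\max_i$ and $1^j \odot (n-j)$ of $\min_j$ noted just before the theorem, combined with the top piece of the coproduct). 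This follows by combining the stated leading-term behavior of $\max_i$ and $\min_j$ with the fact that $\Delta_{k,n-k}(L_\alpha)$ is supported on pairs $(\beta,\gamma)$ with $\alpha = \beta\cdot\gamma$ or $\beta\odot\gamma$, and tracking which such splitting maximizes the concatenated word lexicographically.

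With triangularity in hand, the corollary is a standard unitriangular-change-of-basis argument. Order the special compositions $\alpha$ by lexicographic order $\alpha^{(1)} > \alpha^{(2)} > \cdots$, and let $T_m = (\max_{i_m}\otimes\min_{j_m})\circ\Delta_{k_m,n-k_m}$ be the functional with leading composition $\alpha^{(m)}$. Then the matrix expressing the values $T_m(\KP)$ in terms of the coefficients $c_{\alpha^{(\ell)}}(P)$ is upper-triangular with nonzero diagonal entries, hence invertible; inverting it writes each $c_{\alpha^{(m)}}(P)$ as a $\Z$-linear combination of the $\antikij(P)$'s (specifically of those $T_\ell(\KP)$ with $\ell \le m$). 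Since the hypothesis gives $\antikij(P) = \antikij(Q)$ for all $(k,i,j)$, i.e. $T_\ell(\KP) = T_\ell(\KQ)$ for all $\ell$, we conclude $c_\alpha(P) = c_\alpha(Q)$ for every special composition $\alpha$.

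The only real obstacle is pinning down the triangularity precisely — namely confirming that the leading composition of $T_m$ is $\alpha(k,i,j)$ \emph{and} that no two distinct triples produce the same leading composition (handled by the bijection above) \emph{and} that the diagonal coefficient is genuinely nonzero rather than an accidental cancellation. All three points are essentially bookkeeping on the interaction of the near-concatenation/concatenation structure of $\Delta_{k,n-k}$ with the explicit formula for $\max_i(L_\alpha)$ given in Lemma~\ref{maxmin}; I would carry out this bookkeeping by examining, for a special $\alpha$, exactly which term of $\Delta_{k,n-k}(L_\alpha)$ survives the tensor functional $\max_i\otimes\min_j$ and contributes a lex-maximal index, and checking its sign/magnitude. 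Everything after that is the routine linear-algebra inversion sketched above.
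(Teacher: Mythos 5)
Your proposal is correct and follows essentially the same route as the paper: both arguments rest on the observation (stated just before the corollary) that $c_{\alpha(k,i,j)}(P)$ is the lexicographically leading coefficient in the expansion of $\antikij(P)$, so that the matrix expressing the $\antikij(P)$ in terms of the $c_\alpha(P)$ for special $\alpha$ is triangular with nonzero diagonal and hence invertible. Your version merely makes explicit the bijection $(k,i,j)\leftrightarrow(a,b,c)$ and the verification of the leading term, which the paper asserts without detailed bookkeeping.
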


\begin{proof}

Let $C = \{\alpha \mid \alpha = (a) \odot 1^b \odot 1^c \odot (n-a-b-c)\}$.  Define $\beta(a, b, c)$ by $\beta(a, b, c) = (a) \odot 1^b \odot 1^c \odot (n-a-b-c)$. We showed in Theorem \ref{Anti} that for all triples $(k, i, j)$, $\antikij(P)$ can be expressed as a linear combination of $c_\alpha$ for $\alpha \in C$.  Each of these $c_\alpha$ appear as the leading coefficient in the expansion of some $\antikij(P)$.  In particular, $c_{\beta(a, b, c)}$ is the leading coefficient of the expansion for $\anti_{a+b, b, c}(P)$.  Therefore the matrix that expresses $\antikij(P)$ as a linear combination of $c_{\beta(a,b,c)}$ has full rank, so the coefficient of $L_\alpha$ in $\KP$ is determined by the values of $\antikij(P)$ for all $\alpha \in C$.
\end{proof}

This shows that some easily counted statistics on $J(P)$ determine a number of the coefficients in the fundamental basis expansion of $\KP$.

As another simple application, we can apply $\max_i$ along with the coproduct to see the following result, which will be used later as a tool in showing that two posets do not have the same partition generating function.  

\begin{cor}\label{P without I}
	Suppose that for some $k$ and $i$, $P$ has a unique ideal $I$ of size $k$ with $i$ maximal elements. Then $K_{P\backslash I}(\mathbf{x})$ can be determined from $\KP$.
\end{cor}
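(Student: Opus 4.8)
The plan is to express $K_{P \setminus I}(\mathbf{x})$ as the image of $\KP$ under a linear operator built from the graded comultiplication of $\QSym$ together with the functional $\max_i$ from Lemma~\ref{maxmin}. First I would apply $\Delta_{k, n-k}$ to $\KP$; since $K$ is a Hopf morphism, we have
\[
\Delta_{k, n-k}(\KP) = \sum_{\substack{J \subseteq P \text{ ideal} \\ |J| = k}} K_{J}(\mathbf{x}) \otimes K_{P \setminus J}(\mathbf{x}).
\]
This is a tensor over $\QSym_k \otimes \QSym_{n-k}$, and by hypothesis exactly one of the ideals $J$ of size $k$, namely $I$, has $i$ maximal elements. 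Applying $\max_i \otimes \mathrm{id}$ to this expression and using Lemma~\ref{maxmin}(a), every term with $J \neq I$ of size $k$ contributes $0$ in the first tensor factor (since such $J$ has some number of maximal elements other than $i$), while the $J = I$ term contributes $1 \otimes K_{P \setminus I}(\mathbf{x})$. Hence $(\max_i \otimes \mathrm{id})\bigl(\Delta_{k,n-k}(\KP)\bigr) = K_{P \setminus I}(\mathbf{x})$, viewing $1 \otimes f$ as $f \in \QSym_{n-k}$.

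The key steps, in order: (1) recall that $K$ intertwines the coproducts, so $\Delta_{k,n-k} \circ K = (K \otimes K) \circ \Delta_{k,n-k}$ on $\C[\mathcal{P}]$, which is exactly the compatibility asserted in the Hopf-algebra subsection; (2) write out $\Delta_{k,n-k}(\KP)$ explicitly as the sum over size-$k$ ideals $J$ of $K_J(\mathbf{x}) \otimes K_{P\setminus J}(\mathbf{x})$; (3) apply the linear map $\max_i \otimes \mathrm{id}_{\QSym_{n-k}}$ and invoke Lemma~\ref{maxmin}(a) term by term, noting that $\max_i(K_J(\mathbf{x}))$ is $1$ when $J$ has $i$ maximal elements and $0$ otherwise; (4) use the uniqueness hypothesis to conclude that only the $J = I$ term survives, yielding $K_{P \setminus I}(\mathbf{x})$. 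Since $\Delta_{k,n-k}$ and $\max_i$ are both determined by (and computable from) $\KP$ alone, this shows $K_{P\setminus I}(\mathbf{x})$ is a function of $\KP$.

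I do not expect a serious obstacle here; the only point requiring a little care is the bookkeeping in step (3)–(4): one must make sure that $\max_i$ is applied only to the first tensor factor and that the scalar outputs are correctly collapsed, i.e.\ that the identification of $\C \otimes \QSym_{n-k}$ with $\QSym_{n-k}$ is used consistently. It is also worth remarking that the same argument works symmetrically with $\min_j$ applied to the second factor if one instead knows $P\setminus I$ has a prescribed number of minimal elements, but for the stated corollary the one-sided application suffices.
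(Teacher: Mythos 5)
Your proposal is correct and is exactly the paper's argument: the paper's proof consists of the single formula $K_{P\backslash I}(\mathbf{x}) = (\max_i\nolimits \otimes \mathrm{id})\,\Delta_{k,n-k}\KP$, and your steps (1)--(4) are precisely the justification of that formula. Nothing further is needed.
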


\begin{proof}  The partition generating function for $P\backslash I$ is
	\[K_{P\backslash I}(\mathbf{x}) = (\max_i\nolimits \otimes id) \Delta_{k, n-k} \KP. \qedhere\]
\end{proof}

\subsection{Jump}
Let the \textit{jump} of an element be the maximum number of relations in a saturated chain from the element down to a minimal element.  We define the \textit{jump sequence} to be $\jump(P) = (j_0, \dots, j_k)$, where $j_i$ equals the number of elements with jump $i$, and $k$ is the maximum jump of an element.

McNamara and Ward prove in \cite{McNamaraWard} that if two posets have the same $P$-partition generating function, then they must have the same jump sequence.  The jump sequence of a naturally labeled poset can be interpreted in terms of minimal elements.  Let $P_i$ denote the subposet of $P$ that consists of elements of $P$ with jump greater than or equal to $i$. Then $j_i$ is equal to the number of minimal elements of $P_i$, and $P_{i+1}$ is obtained from $P_{i}$ by removing its minimal elements. McNamara and Ward prove the following result.
\begin{lemma}[\cite{McNamaraWard}, Corollary 5.3]\label{Remove Jump}
	If $P$ and $Q$ have the same partition generating function, then so do $P_i$ and $Q_i$, the induced subposets consisting of elements of jump at least $i$.
\end{lemma}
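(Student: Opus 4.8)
The plan is to reduce everything to a single step — deleting the minimal elements once — and then induct on $i$. Write $P_{i+1}$ for the poset obtained from $P_i$ by removing its minimal elements, so that $P_0=P$ and, inductively, $P_i$ is the induced subposet in the statement; note each $P_i$ carries a natural labeling (restrict the labeling and relabel order-isomorphically onto $[\,|P_i|\,]$, which does not change the generating function). It therefore suffices to show that the quasisymmetric function $K_{P_1}$ depends only on $K_P$. Granting this, an induction on $i$ finishes the argument: the base case $i=0$ is the hypothesis $K_P=K_Q$, and the inductive step applies the single-step claim to $P_i$ and $Q_i$, using that $(P_i)_1=P_{i+1}$.

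For the single-step claim, fix a naturally labeled poset $P$ on $[n]$, let $I=\min(P)$ be its set of minimal elements, and set $m=|I|$, so $P_1=P\setminus I$. The key observation is that $I$ is the \emph{unique} order ideal of $P$ of size $m$ having exactly $m$ maximal elements. Indeed, an order ideal of size $m$ with $m$ maximal elements is an antichain; and any order ideal $J$ that is an antichain consists entirely of minimal elements of $P$ (if $x\in J$ were not minimal, some $y\prec x$ would lie in $J$, contradicting that $J$ is an antichain), so $J\subseteq\min(P)$, whence $J=\min(P)$ once $|J|=m$. Conversely $\min(P)$ is an order ideal and an antichain of size $m$, so it has the required properties.

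Now I combine this with the tools already developed. By Lemma~\ref{maxmin}(b), $m$ is the unique index $i$ with $\min_i(K_P)=1$, so $m$ is determined by $K_P$. By the uniqueness of $I$ just established, Corollary~\ref{P without I} applies with $k=i=m$ and yields
\[ K_{P_1} \;=\; K_{P\setminus I} \;=\; (\max\nolimits_m \otimes \operatorname{id})\,\Delta_{m,\,n-m}(K_P). \]
The right-hand side is built from $K_P$ using only $m$ and the degree $n$, both of which are determined by $K_P$; hence $K_{P_1}$ is determined by $K_P$, completing the single-step claim and, with the induction above, the proof.

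The one place that genuinely requires an idea is the structural characterization in the second paragraph: recognizing the set of minimal elements as the \emph{unique} ideal of size $m$ with $m$ maximal elements is precisely what licenses Corollary~\ref{P without I}. Everything else is formal — in particular, one need only observe that the extraction is a function of $K_P$ alone (via the linear functional $\min_m$) and hence is independent of the chosen natural labeling, so the induction on $i$ goes through without obstruction.
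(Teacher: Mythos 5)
Your proof is correct, but it takes a genuinely different route from the one the paper relies on. The paper imports this statement from McNamara and Ward and only remarks that the descent-set argument in the proof of Lemma~\ref{jump at least i} can be adapted: a linear extension of $P$ beginning with $j_0-1$ descents must list the $j_0$ minimal elements in decreasing order followed by a linear extension of $P_1$, so the coefficient of $L_{1^{j_0}\odot\alpha}$ in $\KP$ equals the coefficient of $L_\alpha$ in $K_{P_1}(\mathbf x)$, and iterating recovers $K_{P_i}(\mathbf x)$ coefficient by coefficient. You instead identify $\min(P)$ as the unique $m$-element ideal with $m$ maximal elements (your antichain argument for this is right), determine $m$ from $\KP$ via $\min_m$, and invoke Corollary~\ref{P without I} to write $K_{P_1}(\mathbf x)=(\max_m\otimes \mathrm{id})\,\Delta_{m,n-m}(\KP)$, then induct on $i$ using $(P_i)_1=P_{i+1}$. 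Both are valid and there is no circularity, since Lemma~\ref{maxmin} and Corollary~\ref{P without I} are independent of this lemma. The descent-set argument is more elementary and transfers directly to the $L$-support statement of Lemma~\ref{jump at least i}, which your extraction does not (the functional $\max_m$ is a signed sum of coefficients, so it controls the full generating function but not its support); on the other hand, your Hopf-theoretic extraction is a clean closed formula that fits naturally with the machinery of Section 3 and avoids any analysis of linear extensions.
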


We prove a similar result on the $L$-support of $\KP$.
\begin{lemma}\label{jump at least i}  If $\KP$ and $\KQ$ have the same $L$-support, then so do $K_{P_i}(\mathbf x)$ and $K_{Q_i}(\mathbf x)$, the partition generating functions for the induced subposets consisting of elements of jump at least $i$.
\end{lemma}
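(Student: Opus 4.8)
The plan is to mimic the structure of McNamara and Ward's argument for Lemma~\ref{Remove Jump}, but tracking $L$-support rather than the full generating function. The key observation is that removing the minimal elements of $P$ corresponds, at the level of order ideals, to passing from $J(P)$ to an interval, and this operation should be visible after applying a suitable linear map to $\QSym$ that preserves the information ``which $L_\alpha$ have nonzero coefficient.'' Concretely, it suffices to treat the case $i=1$ (removing one layer of minimal elements) and then iterate, since $P_i = (P_1)_{i-1}$.

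First I would recall from Lemma~\ref{maxmin}(b) that $\min_j$ detects the number of minimal elements of a poset from its generating function, and that $\min_1(\KP)$ has leading $L$-term $1 \odot (n-1) = (1, n-1)$ in lexicographic order; more importantly, the number $m$ of minimal elements of $P$ equals the unique $j$ with $\min_j(\KP) = 1$, so $m$ is determined by $\KP$ and hence (being a single integer, not a coefficient) is in particular determined by the $L$-support. Actually the cleanest route: $P$ has at most $j$ minimal elements if and only if no linear extension of $P$ has descent set containing $\{1, 2, \dots, j\}$, i.e.\ if and only if no $\alpha \in \supp_L(\KP)$ refines to something with $D(\alpha) \supseteq \{1,\dots,j\}$; so the number $m$ of minimal elements of $P$ is read off from $\supp_L(\KP)$. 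Thus $\supp_L(\KP) = \supp_L(\KQ)$ forces $P$ and $Q$ to have the same number $m$ of minimal elements.

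Next I would set up a map extracting $\supp_L(K_{P_1})$ from $\supp_L(\KP)$. A linear extension $\pi = \pi_1 \pi_2 \cdots \pi_n$ of $P$ in which the minimal elements $\pi_1 < \pi_2 < \cdots < \pi_m$ come first (there is always such an extension for each fixed linear extension of $P_1$, and conversely) restricts to a linear extension $\pi_{m+1} \cdots \pi_n$ of $P_1$; and $\des(\pi) = \des(\pi|_{\text{tail}})$ shifted by $m$, with no descents in positions $1, \dots, m$. So if I define $\rho_1 \colon \QSym_n \to \QSym_{n-m}$ on the fundamental basis by
\[
\rho_1(L_\alpha) = \begin{cases} L_\beta & \text{if } \alpha = 1^{m} \odot \beta \text{ with no descent forced in the first } m-1 \text{ spots beyond this,}\\ 0 & \text{otherwise,}\end{cases}
\]
then $\rho_1(\KP)$ should pick out exactly the contributions of those distinguished linear extensions, and I claim $\supp_L(\rho_1(\KP)) = \supp_L(K_{P_1}(\mathbf x))$. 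The subtlety here is that many linear extensions of $P$ contribute to a given $L_\alpha$, and I need to argue that the coefficient of $L_{1^m \odot \beta}$ in $\KP$ is nonzero exactly when $\beta \in \supp_L(K_{P_1})$; this is where I must be careful, because a priori the coefficient could be a sum of contributions with cancellation --- except that $\KP$ has nonnegative coefficients in the $L$-basis (it counts linear extensions), so there is no cancellation, and the coefficient of $L_{1^m \odot \beta}$ is nonzero iff some linear extension of $P$ has descent composition $1^m \odot \beta$, iff (using that exactly $m$ minimal elements must occupy a descent-free initial run of length $m$, forcing them into positions $1, \dots, m$) some linear extension of $P_1$ has descent composition $\beta$.

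The main obstacle I anticipate is precisely this last step: justifying that every linear extension of $P$ whose descent composition begins with the block $1^m$ actually has its first $m$ entries equal to the $m$ minimal elements of $P$ --- i.e.\ that the length-$m$ ascending initial run cannot ``borrow'' a non-minimal element. This follows because $P$ is naturally labeled: if $\pi_1 < \pi_2 < \cdots < \pi_m < \pi_{m+1}$ and $\pi_{m+1}$ is not in the first block, the entries $\pi_1, \dots, \pi_m$ form an antichain of size $m$ (no two consecutive ones are comparable by natural labeling, and by the ideal structure none of the later ones covers an earlier one without creating a relation contradicting minimality of the run), hence these are $m$ pairwise-incomparable elements that form an ideal, which can only be the full set of minimal elements when there are exactly $m$ of them. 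I would need to write this antichain/ideal argument out carefully, but it is essentially the same bookkeeping already used in the proof of Lemma~\ref{maxcount}. Once $i=1$ is done, the general case follows by induction on $i$, applying the $i=1$ statement to $P_1, Q_1$ in place of $P, Q$ and using $(P_1)_{i-1} = P_i$.
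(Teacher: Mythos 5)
There is a genuine error at the heart of your argument: you have conflated ascending and descending initial runs, and the version you actually try to justify is the false one. The composition $1^{m}\odot\beta$ has descent set $\{1,\dots,m-1\}\cup(m+D(\beta))$, so a linear extension with this descent composition begins with $m-1$ \emph{descents}, i.e.\ $\pi_1>\pi_2>\cdots>\pi_m<\pi_{m+1}$ --- not the ``descent-free initial run'' $\pi_1<\cdots<\pi_m$ that you describe. For a \emph{descending} initial run the key claim is true: since $P$ is naturally labeled, any two entries of a decreasing run are incomparable (if $i<j$ then $\pi_j\not\succ\pi_i$ is not the issue; rather $\pi_j\not\prec\pi_i$ because $\pi$ is a linear extension, and $\pi_i\prec\pi_j$ would force $\pi_i<\pi_j$), so $\{\pi_1,\dots,\pi_m\}$ is an antichain; it is also an order ideal, being a prefix of a linear extension, hence consists of minimal elements, and when $m$ is the number of minimal elements it must be all of them. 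For an \emph{ascending} initial run the claim is simply false, and your justification (``no two consecutive ones are comparable by natural labeling'') does not apply: consecutive entries of an ascending run can perfectly well be comparable. Concretely, take $P$ on $\{1,2,3,4\}$ with $1\prec 3\prec 4$ and $2$ incomparable to everything, so the minimal elements are $\{1,2\}$ and $P_1$ is the chain $3\prec 4$. The linear extension $1,3,4,2$ has an ascending initial run of length $2$ whose entries are $\{1,3\}$, not the minimal elements; its descent composition is $(3,1)=(2)\odot(1,1)$, so the ascending rule would report $(1,1)\in\supp_L(K_{P_1}(\x))$, i.e.\ that a two-element chain admits a linear extension with a descent, which is wrong. (A smaller additional defect of the ascending setup is that even when the minimal elements do come first, whether there is a descent at position $m$ depends on the labels of $\pi_m$ and $\pi_{m+1}$, so the descent composition of $\pi$ is not determined by that of the tail.)

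The repair is exactly the paper's argument: let $j_0$ be the maximal number of \emph{initial descents} occurring among compositions in $\supp_L(\KP)$ (as you correctly observe, this recovers the number of minimal elements from the support alone), and show that $\alpha\in\supp_L(K_{P_1}(\x))$ if and only if $1^{j_0}\odot\alpha\in\supp_L(\KP)$. The forward direction prepends the minimal elements in \emph{decreasing} order of label to a linear extension of $P_1$; no descent is created at position $j_0$ because the label $1$ sits on a minimal element. The backward direction is the antichain--ideal argument above, which forces the first $j_0$ entries to be precisely the minimal elements and the tail to be a linear extension of $P_1$ with descent composition $\alpha$. Your reduction to the case $i=1$ and the iteration $P_i=(P_1)_{i-1}$ are fine once this is fixed.
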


\begin{proof} 
	Any linear extension of $P$ that begins with $j_0-1$ descents must start with the minimal elements of $P$ in descending order followed by a linear extension of $P_1$, and no linear extension can start with more descents. Thus $\alpha \in \supp_L(K_{P_1}(\mathbf x))$ if and only if $1^{j_0}\odot \alpha \in \supp_L(\KP)$, where $j_0$ is the maximum value for which some such $\alpha$ exists. We can repeat this $i$ times to see that  $\beta \in \supp_L(K_{P_i}(\mathbf{x}))$ if and only if $(1^{j_0}\odot 1^{j_1} \odot \dots \odot 1^{j_{i-1}} \odot \beta) \in \supp_L(K_{P}(\mathbf{x}))$.
\end{proof}

A similar proof can be used to give an alternate argument for Lemma~\ref{Remove Jump}.

We define the \textit{upward jump} of an element to be the maximum number of relations in a saturated chain from the element up to a maximal element.  We define the \textit{upward jump sequence} to be $\upjump(P) = (j'_0, \dots, j'_k)$, where $j'_i$ equals the number of elements with upward jump $i$, and $k$ is the maximum up-jump of an element.  We then let the \textit{jump pair} of an element $x$ be $\jumppair(x) = (\jump(x), \upjump(x))$.

\begin{lemma}\label{jumppair}  If $\supp_L(\KP) = \supp_L(\KQ)$, then $P$ and $Q$ have the same number of elements with jump pair $(i, j)$ for all $i$ and $j$.
\end{lemma}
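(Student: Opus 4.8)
The plan is to recover, from $\supp_L(\KP)$ alone, the number $N_{\ge i,\ge j}(P)$ of elements $x$ of $P$ with $\jump(x)\ge i$ and $\upjump(x)\ge j$, for all $i$ and $j$. The number of elements with $\jumppair$ exactly $(i,j)$ is then the finite double difference
\[
N_{\ge i,\ge j}(P)-N_{\ge i+1,\ge j}(P)-N_{\ge i,\ge j+1}(P)+N_{\ge i+1,\ge j+1}(P),
\]
which makes sense because all the $N$'s vanish once the indices exceed the maximum jump and maximum upward jump. So it suffices to show that each $N_{\ge i,\ge j}(P)$ depends only on $\supp_L(\KP)$.

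First I would decouple the two conditions. Recall that $P_i$ is obtained from $P$ by deleting minimal elements $i$ times, so its underlying set is exactly $\{x:\jump(x)\ge i\}$, and $P_i$ is a filter (up-set) of $P$. The point is that for $x\in P_i$ the upward jump of $x$ computed in $P_i$ equals the one computed in $P$: since $P_i$ is an up-set, its maximal elements are precisely the maximal elements of $P$ that lie in $P_i$, and the cover relations of $P$ between two elements of $P_i$ coincide with the cover relations of $P_i$, so a saturated chain in $P$ from $x$ up to a maximal element is the same thing as a saturated chain in $P_i$ from $x$ up to a maximal element of $P_i$. Consequently $N_{\ge i,\ge j}(P)=\lvert(P_i)^j\rvert$, where $(P_i)^j$ denotes the induced subposet of $P_i$ on the elements whose upward jump in $P_i$ is at least $j$.

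Next I would establish the order-reversal dual of Lemma~\ref{jump at least i}: if naturally labeled posets $R,S$ have $\supp_L(K_R(\x))=\supp_L(K_S(\x))$, then so do the generating functions of the induced subposets of $R$ and $S$ on elements of upward jump at least $j$. This is the mirror image of the proof of Lemma~\ref{jump at least i}: a linear extension of $R$ ending in $j_0'-1$ descents must end with the $j_0'=\lvert\max(R)\rvert$ maximal elements of $R$ in decreasing order, preceded by a linear extension of $R\setminus\max(R)$, and no linear extension can end in more descents (by the natural-labeling argument of Lemma~\ref{maxcount}); this gives $\gamma\in\supp_L(K_{R\setminus\max(R)}(\x))$ if and only if $\gamma\odot 1^{j_0'}\in\supp_L(K_R(\x))$, with $j_0'$ recoverable from $\supp_L(K_R(\x))$, and iterating $j$ times handles upward jump at least $j$. (Alternatively, one can deduce this from Lemma~\ref{jump at least i} applied to the order dual $R^{*}$, using that for naturally labeled $R$ the set $\supp_L(K_{R^{*}}(\x))$ consists of the reverses of the compositions in $\supp_L(K_R(\x))$.)

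Putting these together: by Lemma~\ref{jump at least i}, $\supp_L(K_{P_i}(\x))$ is determined by $\supp_L(\KP)$; by the dual statement, $\supp_L(K_{(P_i)^j}(\x))$ is determined by $\supp_L(K_{P_i}(\x))$, hence by $\supp_L(\KP)$. Since every composition occurring in the $L$-support of a poset's generating function has degree equal to the size of that poset, and the support is nonempty whenever the poset is, $\lvert(P_i)^j\rvert=N_{\ge i,\ge j}(P)$ can be read off from $\supp_L(\KP)$, which finishes the proof. The step I expect to require the most care is the decoupling in the second paragraph, i.e.\ checking that passing to the up-set $P_i$ leaves the upward jump of each of its elements unchanged, so that the conditions $\jump\ge i$ and $\upjump\ge j$ can be imposed one after the other on a nested pair of induced subposets; the remainder is a routine combination of Lemma~\ref{jump at least i} with its dual.
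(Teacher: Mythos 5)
Your proof is correct and follows essentially the same route as the paper: apply Lemma~\ref{jump at least i} and its order-reversal dual to determine $\supp_L$ (hence the cardinality) of the subposet of elements with jump at least $i$ and up-jump at least $j$, then recover the exact counts by inclusion-exclusion. The only difference is that you explicitly verify the decoupling step (that passing to the up-set $P_i$ preserves upward jumps), which the paper leaves implicit in the phrase ``by the previous lemma and its dual.''
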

 
 \begin{proof}
 Let $P_{i,j}$ be the induced subposet of $P$ consisting of all elements with jump at least $i$ and up-jump at least $j$.  By the previous lemma and its dual, $\supp_L(P_{i,j})$ is determined by $\KP$, hence so is $|P_{i,j}|$.  This implies the result since the number of elements with jump pair $(i, j)$ is $|P_{i,j}| - |P_{i+1, j}| - |P_{i,j+1}| + |P_{i+1,j+1}|$ by inclusion-exclusion.
 \end{proof}

Another similar result is given in the following lemma.

\begin{lemma}\label{jumpideal} If $\KP = \KQ$, then $P$ and $Q$ have the same number of elements with principal order ideal size $i$ and up-jump $j$.
\end{lemma}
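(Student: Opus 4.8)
The plan is to reduce the claim to Theorem~\ref{Anti} applied to the induced subposets $P^{(j)}$ consisting of all elements of up-jump at least $j$; equivalently, $P^{(j)}$ is obtained from $P$ by deleting all maximal elements $j$ times, so that $P^{(0)}=P$ and $P^{(j+1)} = (P^{(j)})^{(1)}$. Throughout I write $N_{i,j}(P)$ for the number of elements of $P$ with principal order ideal of size $i$ and up-jump exactly $j$.

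The first step is to show that $K_{P^{(j)}}(\mathbf x)$ is determined by $\KP$ for every $j$; it suffices to do this for $j=1$ and iterate. The key observation is that for a naturally labeled poset $R$ on $m$ elements, the coefficient of $L_{1^m}$ in $K_R(\mathbf x)$ equals $1$ if $R$ is an antichain and $0$ otherwise: by Theorem~\ref{L Expansion} this coefficient counts linear extensions whose descent composition is $1^m$, i.e.\ strictly decreasing linear extensions, and a strictly decreasing linear extension of a naturally labeled poset can exist only when the poset has no relations (and is then unique). Next, $P^{(1)} = P\setminus\max(P)$ is the \emph{unique} order ideal $I$ of $P$ with $|I| = |P|-m$ and $P\setminus I$ an antichain, where $m$ is the number of maximal elements of $P$: a filter that is an antichain consists only of maximal elements, and there are exactly $m$ of those. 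Since $m$ is recoverable from $\KP$ by Lemma~\ref{maxmin}, applying $\mathrm{id}$ to the first tensor factor and ``coefficient of $L_{1^m}$'' to the second factor of $\Delta_{|P|-m,\,m}\KP = \sum_{|I|=|P|-m} K_I(\mathbf x)\otimes K_{P\setminus I}(\mathbf x)$ extracts exactly $K_{P^{(1)}}(\mathbf x)$. (This is the ``delete the maximal elements'' analogue of Lemma~\ref{Remove Jump}.)

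The second ingredient is a bijection. If $y\preceq_P x$, then any saturated chain from $x$ up to a maximal element extends to one from $y$, so $\upjump(y)\ge\upjump(x)$; hence whenever $\upjump(x)\ge j$ the entire principal ideal $I_x$ of $x$ is contained in $P^{(j)}$ and is also the principal ideal of $x$ computed inside $P^{(j)}$. It follows that sending a principal order ideal of $P^{(j)}$ to its unique maximal element restricts to a bijection from the principal order ideals of $P^{(j)}$ of size $i$ onto $\{x\in P : |I_x|=i,\ \upjump(x)\ge j\}$. Therefore
\[
\sum_{j'\ge j} N_{i,j'}(P) \;=\; \#\{\text{principal order ideals of } P^{(j)} \text{ of size } i\} \;=\; \sum_{k}\anti_{i,1,k}\!\left(P^{(j)}\right),
\]
and by Theorem~\ref{Anti} the right-hand side depends only on $K_{P^{(j)}}(\mathbf x)$, hence only on $\KP$. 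Subtracting the identity for $j+1$ from the one for $j$ shows that $N_{i,j}(P)$ is determined by $\KP$, which gives the lemma.

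I expect the first step — showing $K_{P^{(1)}}$ is determined by $\KP$ — to be the main obstacle. One cannot simply dualize Lemma~\ref{Remove Jump}, since $K_{P^*}(\mathbf x)$ need not be determined by $\KP$; the content is precisely that ``delete the maximal elements'' is an operation visible on $\KP$, and that is what the antichain-detecting functional ``coefficient of $L_{1^m}$'' combined with the graded coproduct accomplishes. The remaining steps are routine: verifying the bijection (only the inclusion $I_x\subseteq P^{(j)}$ uses anything, namely monotonicity of up-jump along $\preceq$) and the telescoping sum at the end.
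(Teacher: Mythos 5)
Your proof is correct, and its overall architecture is the same as the paper's: both arguments pass to the subposets of elements with up-jump at least $j$ (your $P^{(j)}$, the paper's $P_{0,j}$), observe that up-jump is monotone along $\preceq$ so that principal order ideals of size $i$ in $P^{(j)}$ biject with elements of $P$ having principal ideal size $i$ and up-jump $\geq j$, count these via $\sum_k \anti_{i,1,k}(P^{(j)})$ (equivalently $(\max_1 \otimes\, \zeta)\Delta_{i,\cdot}$), and telescope over $j$. Where you genuinely differ is in the sub-step showing that $K_{P^{(j)}}(\mathbf{x})$ is determined by $\KP$: the paper appeals to the dual of Lemma~\ref{jump at least i}/Lemma~\ref{Remove Jump}, while you extract $K_{P\setminus \max(P)}(\mathbf{x})$ directly as $(\mathrm{id}\otimes \ell)\Delta_{n-m,m}\KP$, where $m=|\max(P)|$ is recovered from Lemma~\ref{maxmin} and $\ell$ is the ``coefficient of $L_{1^m}$'' functional, using that the unique $m$-element antichain filter is $\max(P)$. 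That is a clean, self-contained alternative. One correction, though: your stated reason for avoiding duality --- that $K_{P^*}(\mathbf{x})$ need not be determined by $\KP$ --- is false in the naturally labeled setting. Since $\KP=\sum_\alpha f_{D(\alpha)}M_\alpha$ with $f$ the flag $f$-vector of $J(P)$ and $J(P^*)\cong J(P)^*$, one obtains $K_{P^*}(\mathbf{x})$ by reversing each composition in the $M$-basis expansion, as the paper notes explicitly in Section~4; so the paper's dualization is legitimate, and your functional is an alternative derivation of the same fact rather than a necessary repair. The remaining steps (the antichain-detecting property of the $L_{1^m}$ coefficient, the bijection, and the telescoping) all check out.
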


\begin{proof} Let $P_{0,j}$ be the induced subposet consisting of elements with up-jump at least $j$.  From the dual of Lemma~\ref{jump at least i}, the generating function for $P_{0,j}$ is determined by $\KP$.  The number of elements with principal order ideal size $i$ and up-jump $j$ in $P$ is the same as the number of maximal elements with principal order ideal size $i$ in $P_{0,j}$.  The function $(\max_1 \otimes \zeta)\Delta_{i, |P_{0,j}|-i}$ evaluated on $K_{P_{0,j}}(\mathbf{x})$ gives us the number of elements in $P_{0,j}$ whose principal order ideal has $i$ elements.  We can count the number of these that are maximal by evaluating $(\max_1 \otimes \zeta)\Delta_{i, |P_{0,j+1}|-i}$ on $K_{P_{0,j+1}}(\mathbf{x})$ and taking the difference.
\end{proof}

\subsection{Shape}
Next, we show that the shape of the poset $P$ is determined by $\KP$, or more specifically, by its support.
\begin{thm} \label{shape}
If $\supp_L(\KP) = \supp_L(\KQ)$, then $\sh(P) = \sh(Q)$.
\end{thm}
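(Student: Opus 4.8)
The plan is to recover the shape of $P$ entirely from $\supp_L(\KP)$ by characterizing, for each $i$, the quantity $a_i$ (the maximum size of a union of $i$ antichains) in terms of which compositions $\alpha$ lie in the $L$-support. Recall that $\sh(P) = \lambda$ is determined by the partial sums $\lambda'_1 + \cdots + \lambda'_i = a_i$, so it suffices to show each $a_i$ is an invariant of $\supp_L(\KP)$. The key observation, already foreshadowed in the Antichains and Shape subsection, is that a linear extension of a naturally labeled $P$ having $i$ consecutive descents starting at some position forces the corresponding $i+1$ consecutive values to form an antichain. More generally, a linear extension whose descent composition has near-concatenation decomposition $1^{a_1} \odot 1^{a_2} \odot \cdots \odot 1^{a_\ell}$ breaks the ground set into $\ell$ consecutive blocks of sizes $a_1, \dots, a_\ell$ (with one element of overlap between consecutive blocks in the sense of the ribbon), and each block, being a maximal consecutive run of descents, is an antichain in $P$. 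Thus if some $\alpha \in \supp_L(\KP)$ has a column (of its ribbon) of height $c$, then $P$ has an antichain of size $c$; conversely, if $P$ has a union of $i$ antichains of total size $m$, one should be able to build a linear extension realizing a descent composition whose ribbon has $i$ tall columns accounting for those $m$ elements.

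The first step is to make precise the correspondence between columns in the ribbon of a descent composition and antichains in $P$. Given $\pi \in \mathcal{L}(P)$ with $\co(\pi) = 1^{a_1} \odot \cdots \odot 1^{a_\ell}$, the maximal runs of consecutive descents in $\pi$ partition (up to overlaps) $[n]$ into antichains $A_1, \dots, A_\ell$ with $|A_t| = a_t$; crucially, consecutive blocks $A_t$ and $A_{t+1}$ share exactly one element (the bottom of one run is the top of the next), so the total number of distinct elements covered by any $i$ of these blocks is at most $a_i$ by definition of $a_i$, but the blocks themselves can be unioned cleverly. The cleanest route: show that $a_i \ge$ (sum of the $i$ largest column heights of the ribbon of $\alpha$) $- (i-1)$ is the wrong bookkeeping; instead I would argue directly that $a_i$ equals the maximum over $\alpha \in \supp_L(\KP)$ of a suitable statistic. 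Concretely, I expect the right statement to be: $a_i = \max\{\, |D(\alpha)\cap(\text{some arrangement}) | \,\}$ — and the correct formulation is that $a_i$ equals the maximum, over all $\alpha \in \supp_L(\KP)$, of the number of cells in the $i$ widest "layers" — but the genuinely clean formulation is via Greene's theorem restated: $a_i$ is the largest $m$ such that there is a linear extension of $P$ whose descent set, written as a union of maximal runs, has its $i$ longest runs covering $m$ elements (counting overlaps once).

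The second step is the two inequalities. For "$\ge$" (any such $\alpha$ gives a lower bound on $a_i$): given $\alpha\in\supp_L(\KP)$, pick $\pi\in\mathcal L(P)$ with $\co(\pi)=\alpha$ (this exists since $h_{D(\alpha)}\ne 0$ forces some linear extension with that descent set — here one needs that a nonzero flag $h$-vector entry implies a linear extension with exactly that descent set, which follows from Theorem~\ref{L Expansion} together with the fact that the $L_\alpha$ are linearly independent, so $h_{D(\alpha)}\ne0$ means $c_\alpha\ne0$, i.e.\ $c_\alpha\ge1$). The maximal descent runs of $\pi$ give antichains, and taking the $i$ largest and unioning them (accounting for shared endpoints) gives a union of $i$ antichains of the appropriate size, bounding $a_i$ from below. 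For "$\le$" (the shape is not underestimated): given an optimal union of $i$ antichains $A_1 \cup \cdots \cup A_i$ of total size $a_i$, by Greene's theory one may assume these are "compatible" (can be refined to a chain decomposition complement), and then one constructs a linear extension of $P$ that lists the $A_t$'s as consecutive decreasing runs — possible precisely because each $A_t$ is an antichain so its elements can appear in any order, in particular decreasing, and a naturally labeled poset always admits a linear extension with any prescribed antichain appearing as a consecutive decreasing run. The resulting $\alpha \in \supp_L(\KP)$ witnesses that $a_i$ is attained.

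The main obstacle I anticipate is the "$\le$" direction: constructing a single linear extension of $P$ that simultaneously realizes $i$ disjoint (or overlap-controlled) antichains as consecutive decreasing runs. One cannot freely concatenate antichains, since elements outside $A_1\cup\cdots\cup A_i$ must be interleaved while respecting $\prec$, and the antichains may be "spread out" in $P$. The resolution should come from Greene's duality theorem itself: the optimal $i$-antichain family can be taken to arise from a partition of $P$ into $\lambda'_1$ chains (the width), where the $i$ antichains are "levels"; then a linear extension reading these chains in a coordinated zig-zag order produces the desired descent pattern. I would invoke the Duality Theorem to supply the chain structure and then verify that the zig-zag order is a genuine linear extension and has the claimed descent composition, so that the corresponding $\alpha$ has $c_\alpha \ge 1$ hence $h_{D(\alpha)}\ne 0$ (using that $c_\alpha = \sum_{\beta \preceq \alpha, \text{ via } \odot/\cdot} \pm h$ — more carefully, $c_\alpha\ne 0$ does imply something about the support, but one must be slightly careful since $h_{D(\alpha)}$ could vanish even when $c_\alpha\ne 0$; the safe move is to choose $\alpha$ to be the lexicographically extremal descent composition achieving the bound, for which the change-of-basis triangularity guarantees $h_{D(\alpha)} = c_\alpha \ne 0$).
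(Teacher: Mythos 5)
Your overall architecture matches the paper's: characterize $a_i = \lambda_1' + \cdots + \lambda_i'$ as the maximum of a descent-run statistic over $\supp_L(\KP)$, prove ``$\ge$'' by reading the maximal decreasing runs of a linear extension as antichains, and prove ``$\le$'' by building a linear extension out of an optimal family of $i$ antichains. The ``$\ge$'' half is essentially fine, modulo a bookkeeping error that causes most of your hedging: the maximal decreasing runs of a permutation (equivalently, the columns of the ribbon of $\co(\pi)$) \emph{partition} the positions and do not overlap in one element --- the one-cell overlap in a ribbon is between a column and the adjacent row, not between two columns. Once that is fixed, ``sum of the $i$ largest column heights'' needs no correction term and coincides with the paper's statistic $L_i(\alpha) = i + |D(\alpha)|$ taken over $\alpha$ with $B(\alpha) \le i$. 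Your worry about $h_{D(\alpha)}$ versus $c_\alpha$ is also a non-issue: for a naturally labeled poset the paper's expansion gives $h_{D(\alpha)} = c_\alpha$ exactly, so $\alpha \in \supp_L(\KP)$ if and only if some linear extension has descent composition $\alpha$; your parenthetical claim that $h_{D(\alpha)}$ could vanish while $c_\alpha \neq 0$ is false.

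The genuine gap is the ``$\le$'' direction, which you correctly identify as the main obstacle and then only gesture at. Knowing that a single prescribed antichain can be made a consecutive decreasing run says nothing about realizing $i$ antichains simultaneously, and your proposed fix --- take the optimal antichains to be levels of a Dilworth chain partition and read the chains in a ``coordinated zig-zag'' --- is both unproven and not obviously available (it is not clear that an optimal $i$-antichain family arises as levels of a minimum chain partition, and verifying that the zig-zag is a linear extension with the claimed descent composition is precisely the hard step you are deferring). The paper resolves this without Greene duality: replace $A_1, \dots, A_i$ by the jump levels of the induced subposet on $A_1 \cup \cdots \cup A_i$ (same underlying set, still at most $i$ antichains, now satisfying $A_j \le A_{j+1}$), let $I_j$ be the order ideal generated by $A_1 \cup \cdots \cup A_j$, set $B_j = I_j \setminus (A_j \cup I_{j-1})$ and $B_{i+1} = P \setminus I_i$, and check that listing $B_1 A_1 B_2 A_2 \cdots A_i B_{i+1}$ with each $B_j$ increasing and each $A_j$ decreasing is a linear extension with at least $a_i - i$ descents and at most $i$ nontrivial decreasing runs. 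Without this (or an equivalent) explicit construction, your argument does not establish that the maximum is attained, so the proof is incomplete.
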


\begin{proof}
Let $\alpha = (\alpha_1, \alpha_2, \dots , \alpha_k)$ be a composition of $n$, and let $\sh(P) = \lambda$.  Define $B(\alpha) = \#\{a \mid a\in D(\alpha)\text{ and }a-1 \notin D(\alpha)\}$, that is, $B(\alpha)$ is the number of decreasing runs (with at least two elements) in a permutation with descent set $D(\alpha)$.  We also define $L_i(\alpha) = i + |D(\alpha)|$.

We will prove that the shape of $P$ is determined by its support by showing that, for $i \leq \lambda_1$,
\[\lambda'_1 + \dots + \lambda'_i = \max\{L_i(\alpha) \mid \alpha \in \supp_L(\KP) \text{ and } B(\alpha) \leq i\}.\]
First, choose some $\alpha$ appearing on the right hand side. There is a linear extension of $P$ with descent set $D(\alpha)$ that has at most $i$ decreasing runs. These decreasing runs (together with possibly some single elements) correspond to $i$ antichains of $P$, and the total number of elements in the union of these antichains is $L_i(\alpha)$. Since $\lambda'_1 + \dots + \lambda'_i$ is by definition the largest number of elements in a union of $i$ antichains of $P$, \[\max\{L_i(\alpha) \mid \alpha \in \supp_L(\KP) \text{ and } B(\alpha) \leq i\} \leq \lambda_1' + \dots + \lambda_i'.\]

Conversely, let $A_1$, $A_2$, \dots, $A_i$ be antichains such that $|A_1| + |A_2| + \dots + |A_i| = \lambda'_1 + \dots + \lambda'_i$. Without loss of generality, we can take $A_j \leq A_{j+1}$ for $j =1, \dots i-1$, meaning that for each $y \in A_{j+1}$, there exists $x \in A_{j}$ with $x \preceq y$.  We can do this because in the subposet $A_1 \cup A_2 \cup \cdots \cup A_i$, the longest chain has at most $i$ elements, so we can redefine $A_1$ to be the elements with jump 0 in this subposet, $A_2$ to be the elements with jump 1, and so on.  

For $j=1, \dots, i$, let $I_j$ denote the smallest order ideal containing $A_1 \cup \dots \cup A_j$. Then let $B_1 = I_1 \setminus A_1$, $B_j = I_j \backslash (A_j \cup I_{j-1})$ for $j = 2, \dots i$, and let $B_{i+1} = P \setminus I_i$.  
There is a linear extension $\pi$ of $P$ of the form $\pi = B_1A_1B_2A_2\dots A_iB_{i+1}$, where the entries in each $B_j$ appear in increasing order and the entries in each $A_j$ appear in decreasing order.  It follows that
\[ L_i(\co(\pi)) - i = |\des(\pi)| \geq \sum_{j=1}^i (|A_j|-1) = \lambda'_1 + \dots + \lambda'_i - i. \]
Therefore $L_i(\co(\pi)) \geq \lambda'_1 + \dots + \lambda'_i$, which implies that
\[\lambda'_1 + \dots + \lambda'_i \leq \max\{L_i(\alpha) \mid \alpha \in \supp_L(\KP) \text{ and } B(\alpha) \leq i\}. \qedhere\]
\end{proof}

Therefore the shape of a poset $P$ is determined by the compositions that appear with a nonzero coefficient in the fundamental quasisymmetric function expansion of $\KP$.
\begin{cor}
If $K_P(\mathbf{x}) = K_Q(\mathbf{x})$, then $\sh(P) = \sh(Q)$.
\end{cor}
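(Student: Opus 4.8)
The final statement is the corollary asserting that $K_P(\mathbf{x}) = K_Q(\mathbf{x})$ implies $\sh(P) = \sh(Q)$. This is an immediate consequence of Theorem~\ref{shape}, which establishes the stronger fact that the $L$-support alone determines the shape, so the proof is essentially a one-line deduction. The plan is to observe that if $\KP = \KQ$ as quasisymmetric functions, then in particular they have identical expansions in the fundamental basis $\{L_\alpha\}$, hence identical sets of compositions appearing with nonzero coefficient; that is, $\supp_L(\KP) = \supp_L(\KQ)$. Applying Theorem~\ref{shape} to this equality of supports then yields $\sh(P) = \sh(Q)$ directly.

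Concretely, I would write: Suppose $\KP = \KQ$. Since the fundamental quasisymmetric functions $\{L_\alpha\}$ form a basis of $\QSym$, the coefficient $h_{D(\alpha)}$ of $L_\alpha$ in $\KP$ equals the corresponding coefficient in $\KQ$ for every composition $\alpha$. In particular $h_{D(\alpha)} \neq 0$ for $\KP$ if and only if $h_{D(\alpha)} \neq 0$ for $\KQ$, so $\supp_L(\KP) = \supp_L(\KQ)$ by the definition of the $L$-support. By Theorem~\ref{shape}, this implies $\sh(P) = \sh(Q)$, as claimed.

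There is essentially no obstacle here: the entire content has been packed into Theorem~\ref{shape}, whose proof supplies the explicit combinatorial formula $\lambda'_1 + \cdots + \lambda'_i = \max\{L_i(\alpha) \mid \alpha \in \supp_L(\KP),\ B(\alpha) \leq i\}$ for $i \leq \lambda_1$ (together with the fact that the support determines $\lambda_1$ itself, since $\lambda_1'$ is the width and one recovers when adding further antichains ceases to increase the count). The corollary merely records the special case where the hypothesis of equal supports is strengthened to equal generating functions. The only thing worth double-checking is that the passage from "$\KP = \KQ$" to "$\supp_L(\KP) = \supp_L(\KQ)$" is sound, which it is because $\QSym$ is a free module on $\{L_\alpha\}$ and equality of elements forces equality of all coordinates. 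Thus the proof is genuinely trivial given Theorem~\ref{shape}, and I would keep it to two or three sentences.
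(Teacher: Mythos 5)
Your proof is correct and matches the paper's, which likewise deduces the corollary immediately from Theorem~\ref{shape}: equality of the generating functions forces equality of all coefficients in the basis $\{L_\alpha\}$, hence equality of the $L$-supports. Your write-up just makes this one-line deduction slightly more explicit.
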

\begin{proof}  This result follows directly from the previous theorem.
\end{proof}


\section{Uniqueness from shape}
Since Theorem \ref{shape} shows that posets with the same generating function must have the same shape, one can ask for which shapes is a poset of that shape uniquely determined by its generating function. In other words, for which $\lambda$ do all nonisomorphic posets of shape $\lambda$ have distinct partition generating functions?

We will prove that this holds for three cases below: width two posets, hook shaped posets, and nearly hook shaped posets.

\subsection{Width two posets}
In this section we consider posets whose shape has at most two parts, that is to say, the width of the poset is at most two.  Dilworth's theorem \cite{Dilworth} states that if the width of $P$ is $2$, then $P$ can be partitioned into $2$ chains, $C_1$ and $C_2$.  We will use the notation
\[P = C_1 \uplus C_2\]
to denote our choice of partition.  In the case when $P$ is irreducible, the minimal elements of $C_1$ and $C_2$ are the minimal elements of $P$.  We can embed $J(P)$ into $\mathbb{N}^2$ by mapping an ideal $I$ to the point $(a_1, a_2)$ where $a_i = |I \cap C_i|$.  Hence when referring to $J(P)$ we will treat it as a sublattice of $\mathbb{N}^2$.
\begin{example} \label{jp}
The following is a width 2 poset along with its poset of order ideals embedded in $\mathbb{N}^2$.
\begin{center}

\begin{tikzpicture}
 [auto,
 vertex/.style={circle,draw=black!100,fill=black!100,thick,inner sep=0pt,minimum size=1mm}]
\node(P) at (-1, 2) {$P$};
\node (v) at ( 0,-1) [vertex] {};
\node (v0) at ( 1,-1) [vertex] {};
\node (v1) at ( 0,0) [vertex] {};
\node (v2) at ( 1,0) [vertex] {};
\node (v4) at ( 0,1) [vertex] {};
\node (v5) at ( 1,1) [vertex] {};
\node (v7) at ( 1,2) [vertex] {};
\draw [-] (v1) to (v4);
\draw [-] (v2) to (v4);
\draw [-] (v2) to (v5);
\draw [-] (v5) to (v7);
\draw [-] (v1) to (v7);
\draw [-] (v) to (v1);
\draw [-] (v0) to (v2);
\draw [-] (v0) to (v1);
\end{tikzpicture}
\hspace{3cm}
\begin{tikzpicture}
 [auto,
 vertex/.style={circle,draw=black!100,fill=black!100,thick,inner sep=0pt,minimum size=1mm}, scale = .5]
\node(JP) at (-3, 4) {$J(P)$};
\node (a) at ( 0,-2) [vertex] {};
\node (b) at ( -1,-1) [vertex] {};
\node (c) at ( 1,-1) [vertex] {};
\node (d) at ( 2,0) [vertex] {};
\node (e) at ( 3,1) [vertex] {};

\node (00) at ( 0,0) [vertex] {};
\node (10) at ( -1,1) [vertex] {};
\node (01) at ( 1,1) [vertex] {};
\node (11) at ( 0,2) [vertex] {};
\node (21) at ( -1,3) [vertex] {};
\node (02) at (2,2) [vertex] {};
\node (12) at (1,3) [vertex] {};
\node (13) at (2,4) [vertex] {};
\node (22) at (0,4) [vertex] {};
\node (23) at (1,5) [vertex] {};
\draw [-] (00) to (10);
\draw [-] (00) to (01);
\draw [-] (10) to (11);
\draw [-] (01) to (11);
\draw [-] (11) to (21);
\draw [-] (01) to (02);
\draw [-] (11) to (12);
\draw [-] (02) to (12);
\draw [-] (21) to (22);
\draw [-] (12) to (22);
\draw [-] (12) to (13);
\draw [-] (22) to (23);
\draw [-] (13) to (23);

\draw [-] (a) to (b);
\draw [-] (a) to (c);
\draw [-] (c) to (d);
\draw [-] (d) to (e);
\draw [-] (b) to (00);
\draw [-] (c) to (00);
\draw [-] (d) to (01);
\draw [-] (e) to (02);

\end{tikzpicture}
\end{center}
\end{example}

We will show that any poset of width two is uniquely determined by its partition generating function. We will first need several useful lemmas about the structure of $P$.

\begin{lemma}\label{P prime}  Let $P'$ be the induced subposet of $P$ consisting of all elements that are not minimal.  The generating function for $P'$ is determined by $\KP$.
\end{lemma}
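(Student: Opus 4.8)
The plan is to recover $K_{P'}(\mathbf{x})$ from $\KP$ by peeling off the minimal elements of $P$, using the same strategy as Lemma~\ref{jump at least i} (the jump-$\geq 1$ case) but now making essential use of the fact that $P$ has width at most two. Write $m = j_0$ for the number of minimal elements of $P$; recall from Lemma~\ref{maxmin}(b) that $m$ is determined by $\KP$ (it is the largest $k$ for which $1^k$ is a prefix of some $\alpha \in \supp_L(\KP)$, or more precisely it is read off from $\min_i$). Since $P$ has width $\leq 2$, we have $m \in \{1, 2\}$. If $m = 1$ then $P = \{\hat 0\} \oplus P'$ is an ordinal sum, and Lemma~\ref{K of reducible} immediately gives $K_{P'}(\mathbf{x}) = \pi_2(\KP)$, so we are done. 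Hence we may assume $m = 2$, so $P$ has exactly two minimal elements, say $a < b$, and $P' = P \setminus \{a, b\}$.

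The key point when $m = 2$ is that the two minimal elements $a, b$ together contribute a controlled prefix to every linear extension realizing a support element with a maximal initial descent. Concretely: a linear extension $\pi$ of $P$ begins with a descent if and only if it starts $b\, a\, \rho$ where $\rho$ is a linear extension of $P \setminus \{a, b\} = P'$; and since the width is two, no linear extension can start with two consecutive descents (that would force a $3$-element antichain among the first three entries). Therefore, for any composition $\beta$,
\[
\beta \in \supp_L(K_{P'}(\mathbf{x})) \iff 1^2 \odot \beta \in \supp_L(\KP),
\]
which by Theorem~\ref{shape}'s method recovers $\supp_L(K_{P'})$ — but we want the full generating function, not just its support. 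To get the full $K_{P'}(\mathbf{x})$, I would instead produce an explicit linear map $\QSym_n \to \QSym_{n-2}$ computing it. Every linear extension of $P$ either (i) begins $b\,a$ followed by a linear extension of $P'$, or (ii) begins with $a$ (no initial descent) followed by a linear extension of $P \setminus \{a\}$. Linear extensions of the first kind are exactly those whose descent composition has the form $1^2 \odot \beta$ with $\beta$ the descent composition of the corresponding extension of $P'$ — here I use that in case (i) the third entry of $\pi$ is forced to be larger than $a$ (it is a minimal element of $P'$, hence naturally labeled above... wait: $a < b$ and the third entry $c$ is minimal in $P'$, and since $b\,a\,c$ has $a < c$ because $P$ naturally labeled and $a$ is the global minimum label... ) so position $2$ is a descent and position $3$ is an ascent. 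Thus the coefficient of $L_{1^2 \odot \beta}$ in $\KP$, summed appropriately to strip the ambiguity with case (ii) extensions that happen to also have descent composition beginning $1^2$, must be disentangled.

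I expect the disentangling in the previous step to be the main obstacle: a type-(ii) linear extension starting with $a$ could still have $\pi_2 > \pi_3$, contributing to $L_\alpha$ with $\alpha$ having a descent at position $2$, and one must check such $\alpha$ never has the form $1^2 \odot \beta$ — equivalently, position $1$ is never a descent for type-(ii) extensions, which is immediate since they start with the smallest label $a$. So in fact $L_\alpha$ with $\alpha = 1^2 \odot \beta$ receives contributions \emph{only} from type-(i) extensions, and the map
\[
K_{P'}(\mathbf{x}) = \rho\bigl(\KP\bigr), \qquad \rho(L_\alpha) = \begin{cases} L_\beta & \text{if } \alpha = 1^2 \odot \beta, \\ 0 & \text{otherwise,}\end{cases}
\]
extended linearly, does the job once we have reduced to the case $m=2$. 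I would then remark that this is the $i=1$ dual instance of the peeling argument already used, combined with the width-two hypothesis to pin down $m \leq 2$; the $m = 1$ case is handled separately by ordinal-sum decomposition as above. The only real care needed is the bookkeeping verifying that no type-(ii) extension contributes to a composition with a leading $1^2$, which follows because the minimum label of $P$ occupies position $1$ in every type-(ii) extension.
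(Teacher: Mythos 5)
Your argument is correct in substance, but it takes a different route from the paper: the paper's proof is a one-line reduction, observing that $P'$ is exactly the induced subposet $P_1$ of elements of jump at least $1$, so the claim is immediate from Lemma~\ref{Remove Jump} (McNamara--Ward), with no width hypothesis needed. What you construct instead is essentially the ``alternate argument for Lemma~\ref{Remove Jump}'' that the paper alludes to after Lemma~\ref{jump at least i}, made quantitative: the linear map $\rho$ stripping a leading $1^2\odot$ from descent compositions does recover $K_{P'}(\mathbf{x})$, because a descent at position $1$ forces the prefix $b\,a$ (both entries must be minimal, hence are the two minimal elements in decreasing order), and position $2$ is then automatically an ascent since $a=1$ is the globally smallest label. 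Two remarks. First, your stated dichotomy of linear extensions is not exhaustive: an extension may begin with $b$ followed by an element covering only $b$ (not by $a$), so it is neither type (i) nor type (ii); this does not damage the proof, since such extensions have no descent at position $1$ and are annihilated by $\rho$, but the case analysis should be phrased as ``which extensions contribute to $L_{1^2\odot\beta}$'' rather than as a partition of all of $\mathcal L(P)$. Second, the width-two hypothesis is doing no real work: for a poset with $j_0$ minimal elements, an extension with descents at positions $1,\dots,j_0-1$ must begin with all $j_0$ minimal elements in decreasing order (ending in the label $1$, so position $j_0$ is an ascent), followed by a linear extension of $P'$; hence the map sending $L_{1^{j_0}\odot\beta}\mapsto L_\beta$ and all other $L_\alpha\mapsto 0$ computes $K_{P'}(\mathbf{x})$ for an arbitrary naturally labeled poset, which also absorbs your separate $m=1$ case.
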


\begin{proof}  This follows immediately from Lemma \ref{Remove Jump} when $i=1$.  
\end{proof}

In the case when $P$ has width $2$ and is irreducible (and hence has two minimal elements), we can explicitly find the partition generating function for $P'$ as
\[K_{P'}(\mathbf{x}) = (\max_2\nolimits \otimes id) \circ \Delta_{2, n-2} \KP.\]
In terms of $J(P)$, the subposet of elements greater than or equal to $(1, 1)$ is isomorphic to $J(P')$.

\begin{center}
	\begin{tikzpicture}
	[auto,
	vertex/.style={circle,draw=black!100,fill=black!100,thick,inner sep=0pt,minimum size=1mm},scale=1.2]
	\node(R) at (1.5, .5) {$J(P)$};
	\node(R) at (.5, 2) {$J(P')$};
	\node (min) at ( 0,0) [vertex, label=below:$(0\text{,}0)$] {};
	\node (2) at ( -.2,.2) [vertex] {};
	\node (3) at ( .2,.2) [vertex] {};
	\node (4) at ( 0,.4) [vertex, label={[label distance=.2cm]above:$(1\text{,}1)$}] {};
	\node(max) at (.8,3.2) [vertex] {};
	\draw [-] (0,0) to (-.6, .6);
	\draw [dashed] (-.6, .6) to (-1.2, 1.2);
	\draw [-] (0,0) to (1, 1);
	\draw [dashed] (1, 1) to (2, 2);
	\draw [dashed] (-1.2, 1.2) to (-.4, 2);
	\draw [-] (-.4, 2) to (max);
	\draw [-] (1.4, 2.6) to (max);
	\draw [dashed] (2, 2) to (1.4, 2.6);
	\draw [-] (min) to (2);
	\draw [-] (min) to (3);
	\draw [-] (2) to (4);
	\draw [dashed] (-.6, 1) to (-1, 1.4);
	\draw [-] (4) to (-.6, 1);
	\draw [-] (3) to (4);
	\draw [dashed] (1, 1.4) to (1.8, 2.2);
	\draw [-] (4) to (1, 1.4);
	\end{tikzpicture}
\end{center}

It is not true that $P'$ must be irreducible if $P$ is irreducible, but there are some restrictions for what the ordinal sum decomposition of $P'$, or indeed of any filter of $P$, can be.  

\begin{lemma}\label{Filter decomp}
If $P = C_1 \uplus C_2$ is irreducible, then for all filters $F \subseteq P$, $F$ can be expressed as $F = C \oplus R$, where $C$ is a (possibly empty) chain satisfying $C \subseteq C_1$ or $C \subseteq C_2$, and $R$ is irreducible.
\end{lemma}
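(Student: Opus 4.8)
\textbf{Proof plan for Lemma~\ref{Filter decomp}.}

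The plan is to analyze the structure of the distributive lattice $J(F)$ sitting inside $J(P) \subseteq \mathbb{N}^2$ and to locate a ``column'' or ``row'' of width-one elements at the bottom of $J(F)$ that corresponds to the chain summand $C$. Write $P = C_1 \sqcup C_2$ and let $F$ be a filter; since $F$ is a filter, the minimal elements of $F$ form an antichain of size at most two. If $F$ has a unique minimal element, then $F$ itself is a principal filter with a bottom chain; more precisely, I would peel off the longest initial segment of $F$ that is a chain (the elements $x$ of $F$ such that the filter generated by $x$ is all of $F$, equivalently the elements below every element of $F$), call it $C$, and set $R = F \setminus C$. By construction $R$ has no element comparable to all others, so $R$ is irreducible (or empty), and $C$, being a chain in $P$, must lie entirely in $C_1$ or entirely in $C_2$ since $P$ has width two and $C$ is totally ordered.

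The substance is the case where $F$ has exactly two minimal elements $m_1 \in C_1$ and $m_2 \in C_2$. Here $C = \varnothing$ and I must show $F$ is irreducible, i.e.\ $J(F)$ has no unique element of intermediate rank. Suppose for contradiction $F = Q \oplus R'$ with $Q, R'$ nonempty; then $F$ has a unique ideal of some size $0 < k < |F|$, equivalently $J(F)$ has a cut point. Embedding $J(F)$ in $\mathbb{N}^2$ via $I \mapsto (|I \cap C_1|, |I \cap C_2|)$, a rank-$k$ element is a point $(a,b)$ with $a+b = k$ lying in $J(F)$; uniqueness means the ``staircase'' $J(F)$ passes through exactly one lattice point on the antidiagonal $a + b = k$. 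But $J(F)$ contains both $(1,0)$ and $(0,1)$ (from the two minimal elements), hence it contains two distinct points on the antidiagonal $a+b=1$; and at the top it contains two distinct points on the antidiagonal $a+b = |F|-1$ (remove either of the two elements of $F$ that are maximal among those comparable to everything — here one uses that a width-two irreducible-at-the-top poset has $\geq 2$ maximal elements, or more carefully, that $F$ has at least two maximal elements because $F$ is not a chain). The key point: in a width-two distributive lattice embedded in $\mathbb{N}^2$, once the staircase has split into two points on some antidiagonal it cannot reconverge to a single point on a later antidiagonal without the poset being reducible there — and I claim it stays split all the way up, contradicting the existence of a cut. I would make this precise by showing that if $(a,b), (a',b') \in J(F)$ with $a+b = a'+b'$ and $(a,b)\neq(a',b')$, then for every $k'$ with $a+b \le k' \le |F|$ there are at least two points of $J(F)$ of rank $k'$, using that $J(F)$ is a sublattice of $\mathbb{N}^2$ (closed under coordinatewise $\min$ and $\max$) together with the fact that $F$ has at least two minimal and at least two maximal elements.

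The main obstacle I anticipate is this last monotonicity claim — ruling out ``reconvergence'' of the staircase. The clean way is probably: if $J(F)$ had a cut at rank $k$, the unique rank-$k$ ideal $I_0$ would give $F = F[I_0] \oplus F/I_0$ where $F[I_0]$ is a filter of $P$ with two minimal elements, hence is not a chain, hence has two maximal elements; but those two maximal elements of $F[I_0]$ sit at ranks $k-1$ giving two rank-$(k-1)$ ideals below the unique rank-$k$ ideal — fine — the contradiction instead comes from $F/I_0$ having two minimal elements, so $I_0$ cannot be the \emph{unique} ideal of size $k$: adding either minimal element of $F/I_0$ to $I_0$... no, that increases the size. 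The real contradiction: $F[I_0]$ has two minimal elements which are the two minimal elements $m_1, m_2$ of $F$, and since $F[I_0]$ is not a chain it is itself reducible only in the top part, but the lattice point count on the antidiagonal through $I_0$ being one forces $I_0 = F[I_0]$ up to the split, which is incompatible with $m_1, m_2$ both lying strictly below everything in $F/I_0$. I would organize this as: the presence of two minimal elements in both $F$ and (any) $F \setminus I$ for $I$ a proper nonempty ideal forces $J(F)$ to have $\geq 2$ points of every intermediate rank, contradicting a cut; this is where I will spend the most care, likely via a direct argument on sublattices of $\mathbb{N}^2$.
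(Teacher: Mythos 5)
There is a genuine gap, and it sits exactly where you predicted it would: the claim that the ``staircase'' of $J(F)$, once split, can never reconverge. That claim is false for width-two posets with two minimal elements in general: the poset $\{b,c\}\oplus\{d\}$ (two incomparable minimal elements, both covered by $d$) has two ideals of size $1$ but a unique ideal of size $2$, so its lattice of ideals splits and then reconverges, and the poset is reducible. Of course this particular $F$ cannot arise as a filter of an irreducible width-two $P$ --- but that is precisely what has to be proved, and every argument you attempt stays entirely inside $F$ and $J(F)$, never invoking the complement $P\setminus F$. Since the statement you are trying to establish (``a width-two filter with two minimal elements is irreducible'') is false as a statement about $F$ alone, no amount of care with sublattices of $\mathbb{N}^2$ will close the gap; the irreducibility of $P$ must enter through $P\setminus F$. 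Your candidate contradictions in the final paragraph are, as you half-acknowledge, circular: ``the unique ideal $I_0$ of size $k$ forces every minimal element of $F\setminus I_0$ to lie above every maximal element of $I_0$'' is the definition of a cut, not a refutation of one.

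The missing idea --- and the paper's route --- is to let a bad decomposition of $F$ propagate to a decomposition of $P$. Write $F=C\oplus R$ with $R$ the \emph{last} irreducible block of the ordinal sum decomposition of $F$, so that irreducibility of $R$ is automatic and all the work is in showing $C$ is a chain inside one $C_i$. If $C$ had width two, pick a $2$-antichain in $C$; any $x\in P\setminus F$ below no element of $C$ would extend it to a $3$-antichain (it cannot be above anything in $F$, since $F$ is a filter), so every element of $P\setminus F$ lies below some element of $C$, hence below all of $R$, and $P=(P\setminus R)\oplus R$ contradicts irreducibility of $P$. A similarly short argument (an element of $C\cap C_2$ in a chain $C$ whose minimum lies in $C_1$ is comparable to everything in $P$) forces $C$ into a single $C_i$. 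Note also that your first case has its own defect: defining $C$ as the elements below every element of $F$ and asserting that $R=F\setminus C$ ``has no element comparable to all others, so $R$ is irreducible'' fails on both counts --- for $F=\{a\}\oplus\{z\}\oplus\{b,c\}$ your $C$ is $\{a\}$ and $z$ is comparable to everything in $R$, and in any case the implication is false, since an ordinal sum of two $2$-antichains has no element comparable to all others yet is reducible. Taking $R$ to be the last irreducible block avoids both problems.
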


\begin{proof}
Let $F$ be a filter of $P$.  We can express $F$ as $F = C \oplus R$, where $R$ is irreducible.  It remains to be shown that $C$ is a chain contained in either $C_1$ or $C_2$.

Suppose that $C$ is not a chain.  This means that the width of $C$ is $2$.  Every element in $P \backslash F$ must be less than an element of $C$ or else the width of $P$ would be at least $3$.  Therefore every element of $R$ is greater than every element of $P\backslash R$ implying that $P$ is reducible.  Therefore $C$ is a chain.

We conclude the proof by showing that either $C \subseteq C_1$ or $C \subseteq C_2$.  Suppose the minimal element of $C$ is an element of $C_1$.  If $C$ contained an element $c_2 \in C_2$, then $c_2$ would be related to all of the elements in $P$ (for any element of $P \backslash F$ either also lies in $C_2$, or it lies in $C_1$ and is less than the minimal element of $C$, which is also less than $c_2$). This cannot happen if $P$ is irreducible.
\end{proof}

We say that an ideal $I$ of $P$ is a \emph{chain ideal} if $I$ is a chain. If $P$ is irreducible, let $a$ and $b$ (assume $a \leq b$) be the sizes of the two maximal chain ideals of $P$. One of these chain ideals will be contained in $C_1$ and the other in $C_2$.  If, say, the largest chain ideal in $C_1$ has $a$ elements, then the $(a+1)$st element of $C_1$ is the smallest element of $C_1$ greater than the minimum element of $C_2$.  The value of $b$ can be described similarly.

In terms of $J(P)$ (which we embed in $\mathbb N^2$ as described above), $a$ and $b$ can be determined by finding the coordinates of the largest points on the $x$- and $y$-axes. For instance, in Example~\ref{jp}, $a=1$ and $b=3$.

\begin{lemma}\label{chain ideal} Let $P=C_1 \uplus C_2$ be irreducible. Then the values of $a$ and $b$ are determined by $\KP$, and if $a \neq b$, then there exists exactly one $(a+1)$-element chain ideal. If $I_{a+1}$ is this chain ideal and $P'' = P \setminus I_{a+1}$, then $K_{P''}(\mathbf{x})$ is determined by $\KP$.
\end{lemma}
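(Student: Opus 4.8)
The plan is to reduce the statement to computing, for each $k$, the number $t_k$ of $k$-element chain ideals of $P$, and then to show that every $t_k$ is determined by $\KP$. The structural facts I would use are the following. Since $P = C_1 \sqcup C_2$ is irreducible of width two, it has exactly two minimal elements $m_1, m_2$: a unique minimal element $m$ would force $P = \{m\} \oplus (P \setminus \{m\})$. Moreover, an ideal $I$ of $P$ is a chain if and only if it has exactly one minimal element: one direction is trivial, and for the other, if $I$ is not a chain it contains incomparable $x, y$, each lying above the unique minimal element of $I$, so the remaining minimal element $m'$ of $P$ (which does not lie in $I$) is incomparable to both $x$ and $y$, giving an antichain $\{m', x, y\}$ of size three and contradicting width two. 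The same three-antichain argument shows that the set $S_1$ of elements $x$ with $\downarrow x$ a chain containing $m_1$, and likewise the set $S_2$ for $m_2$, are totally ordered; since $\downarrow x \subseteq C_1$ whenever $\downarrow x$ is a chain containing $m_1$, the set $S_1$ is an initial segment of $C_1$ and $S_2$ an initial segment of $C_2$. Writing $b = |S_1| \ge a = |S_2|$, the $k$-element chain ideals are exactly the initial $k$-segments of $S_1$ and of $S_2$, so $t_k = 2$ for $k \le a$, $t_k = 1$ for $a < k \le b$, and $t_k = 0$ otherwise; thus $a$ and $b$ can be read off from the sequence $(t_k)$, and $S_1, S_2$ are the two maximal chain ideals of $P$.

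Next I would compute $t_k$ from $\KP$. The total number of $k$-element ideals of $P$ is the flag $f$-vector entry $f_{\{k\}}$ of $J(P)$, which is the coefficient of $M_{(k, n-k)}$ in $\KP$. Among these ideals, the ones that fail to be chains are exactly those with two minimal elements, i.e.\ those containing both $m_1$ and $m_2$; deleting $m_1$ and $m_2$ is a bijection from these onto the $(k-2)$-element ideals of $P' := P \setminus \{m_1, m_2\}$, and the number of the latter is read off from $K_{P'}(\mathbf{x})$, which is determined by $\KP$ by Lemma~\ref{P prime}. Subtracting yields $t_k$, and hence $a$ and $b$.

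Finally, the ``moreover'' part. Assume $a < b$, so that $a+1 \le b$ and $t_{a+1} = 1$: there is a unique $(a+1)$-element chain ideal $I_{a+1}$. I claim that, in fact, every principal ideal of $P$ of size $a+1$ equals $I_{a+1}$. Indeed, the generator of a non-chain principal ideal $\downarrow x$ lies above both minimal elements, hence strictly beyond the end of whichever of $S_1, S_2$ it would belong to, so $\downarrow x$ meets one of the two chains $C_i$ in more than $a$ elements and the other in at least one element, forcing $|\downarrow x| \ge a+2$. Therefore $I_{a+1}$ is the unique ideal of $P$ of size $a+1$ with a single maximal element; since the number of such ideals is $\sum_j \anti_{a+1, 1, j}(P)$, which is determined by $\KP$ by Theorem~\ref{Anti} and equals $1$, Corollary~\ref{P without I} applies and gives
\[ K_{P''}(\mathbf{x}) = (\max_1\nolimits \otimes\, id)\, \Delta_{a+1,\, n-a-1}\, \KP. \]

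I expect the main obstacle to be the structural lemma that in an irreducible width-two poset a chain ideal is exactly an ideal with a unique minimal element, together with the companion fact that $S_1$ and $S_2$ are chains: this is precisely where both irreducibility and width two enter, through the ``no third incomparable element'' argument. Everything else — the bijection counting the non-chain ideals, the size bound $|\downarrow x| \ge a+2$ for non-chain principal ideals, and the applications of Lemma~\ref{P prime}, Theorem~\ref{Anti}, and Corollary~\ref{P without I} — should then be routine.
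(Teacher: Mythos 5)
Your proof is correct and follows essentially the same route as the paper: count the $k$-element chain ideals as (all $k$-element ideals) minus ($(k-2)$-element ideals of $P'$), using the fact that in an irreducible width-two poset the chain ideals are exactly the ideals missing one of the two minimal elements, read off $a$ and $b$ from that sequence, and then apply Corollary~\ref{P without I} to the unique $(a+1)$-element chain ideal. The only divergence is at the last step: the paper characterizes $I_{a+1}$ as the unique $(a+1)$-element ideal with exactly one \emph{minimal} element and uses $\min_1$, whereas you characterize it as the unique one with exactly one \emph{maximal} element and use $\max_1$, which costs you the extra (correct, but avoidable) argument that every non-chain principal ideal has at least $a+2$ elements.
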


\begin{proof}
When the width of $P$ is at most $2$, $P$ has at most $2$ chain ideals of any given size.  Also, since $P$ is irreducible, the only way for an ideal to have exactly one minimal element is if it is a chain ideal (for if an ideal with width two had one minimal element, then this element would be less than every other element of $P$, which cannot happen if $P$ is irreducible).
Since $a\leq b$, the value of $a$ is the largest number such that $P$ has two $a$-element chain ideals, and $b$ is the smallest number such that $P$ has no $(b+1)$-element chain ideals.

The number of $k$-element ideals in $P$ is counted by $\rank_k(\KP) = \sum_{i,j} \anti_{k,i,j}(\KP)$.  The $k$-element chain ideals are exactly the $k$-element ideals in $P$ that do not contain both minimal elements.  We can count the number of $k$-elements ideals of $P$ that contain both minimal elements by counting the number of $(k-2)$-element ideals in $P'$.  This is counted by $\rank_{k-2}(K_{P'}(\mathbf{x}))$.  Therefore the number of $k$-element chain ideals in $P$ is counted by $\rank_k(\KP) - \rank_{k-2}(K_{P'}(\mathbf{x}))$.  Thus the value of $a$ is the largest number such that
\[\rank_a(\KP) - \rank_{a-2}(K_{P'}(\mathbf{x})) = 2,\]
while the value of $b$ is smallest number such that 
\[\rank_{b+1}(\KP) - \rank_{b-1}(K_{P'}(\mathbf{x})) = 0.\]

We will now show that if $a \neq b$, then $K_{P''}(\mathbf{x})$ is determined by $\KP$. Since there is a unique $(a+1)$-element chain ideal and $P$ is irreducible, there is a unique $(a+1)$-element ideal with exactly one minimal element.  Therefore the result follows as in Corollary \ref{P without I}:
\[K_{P''}(\mathbf{x}) = (\min_1\nolimits \otimes id) \Delta_{a+1, n-a-1} \KP. \qedhere\]
\end{proof}

Note that if the width of $P$ is at most two, then the width of any induced subposet is also at most two.  In particular, the widths of $P'$ and $P''$ are both at most two.

\begin{example}
The following is a width 2 poset $P$ along with $P'$ and $P''$. Here, $a=1$ and $b=3$, which correspond to the maximal chain ideals in the left and right chain, respectively. (See also $J(P)$ in Example~\ref{jp}.)
\begin{center}
\begin{tikzpicture}
 [auto,
 vertex/.style={circle,draw=black!100,fill=black!100,thick,inner sep=0pt,minimum size=1mm}]
\node(P) at (-1, 2) {$P$};
\node (v) at ( 0,-1) [vertex] {};
\node (v0) at ( 1,-1) [vertex] {};
\node (v1) at ( 0,0) [vertex] {};
\node (v2) at ( 1,0) [vertex] {};
\node (v4) at ( 0,1) [vertex] {};
\node (v5) at ( 1,1) [vertex] {};
\node (v7) at ( 1,2) [vertex] {};
\draw [-] (v1) to (v4);
\draw [-] (v2) to (v4);
\draw [-] (v2) to (v5);
\draw [-] (v5) to (v7);
\draw [-] (v1) to (v7);
\draw [-] (v) to (v1);
\draw [-] (v0) to (v2);
\draw [-] (v0) to (v1);
\end{tikzpicture}
\hspace{2cm}
\begin{tikzpicture}
 [auto,
 vertex/.style={circle,draw=black!100,fill=black!100,thick,inner sep=0pt,minimum size=1mm}]
\node(P) at (-1, 2) {$P'$};
\node (v1) at ( 0,0) [vertex] {};
\node (v2) at ( 1,0) [vertex] {};
\node (v4) at ( 0,1) [vertex] {};
\node (v5) at ( 1,1) [vertex] {};
\node (v7) at ( 1,2) [vertex] {};
\node at (0,-1){};
\draw [-] (v1) to (v4);
\draw [-] (v2) to (v4);
\draw [-] (v2) to (v5);
\draw [-] (v5) to (v7);
\draw [-] (v1) to (v7);
\end{tikzpicture}
\hspace{2cm}
\begin{tikzpicture}
 [auto,
 vertex/.style={circle,draw=black!100,fill=black!100,thick,inner sep=0pt,minimum size=1mm}]
\node(P) at (-1, 2) {$P''$};
\node (v) at ( 0,-1) [vertex] {};
\node (v1) at ( 0,0) [vertex] {};
\node (v4) at ( 0,1) [vertex] {};
\node (v5) at ( 1,1) [vertex] {};
\node (v7) at ( 1,2) [vertex] {};
\draw [-] (v1) to (v4);
\draw [-] (v5) to (v7);
\draw [-] (v1) to (v7);
\draw [-] (v) to (v1);
\end{tikzpicture}

\end{center}
\end{example}

We are now ready to prove the main result of this section.

\begin{thm}\label{width2}
If the width of $P$ is at most $2$, then $P$ is uniquely determined by $\KP$. If $P$ is irreducible, then $P$ has a unique decomposition $P = C_1 \uplus C_2$ (up to reordering).
\end{thm}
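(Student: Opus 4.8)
The plan is to prove both assertions by strong induction on $n=|P|$, exhibiting an explicit reconstruction of $P$ (and, when $P$ is irreducible, of its chain decomposition) from $\KP$. By Lemma~\ref{K of reducible}, $\KP$ determines the ordinal sum decomposition $P=P_1\oplus\cdots\oplus P_k$ together with each $K_{P_i}(\mathbf x)$, and each $P_i$ has width at most $2$; so if $k\ge 2$, the inductive hypothesis recovers each $P_i$ and hence $P$. Thus assume $P$ is irreducible and, excluding the trivial case where $P$ is a chain, of width exactly $2$; then $P$ has exactly two minimal and two maximal elements, and in any decomposition $P=C_1\sqcup C_2$, writing $C_1\colon x_1\prec\cdots\prec x_p$ and $C_2\colon y_1\prec\cdots\prec y_q$, the minima $x_1,y_1$ are precisely the two minimal elements of $P$. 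The argument hinges on the observation that $P$ is determined by the poset $P'=P\setminus\{x_1,y_1\}$ together with its chain decomposition $C_1'\sqcup C_2'$ (where $C_i'=C_i\setminus\{\min C_i\}$) and the pair $a\le b$ from Lemma~\ref{chain ideal} (the sizes of the two maximal chain ideals), along with the matching recording which chain has maximal chain ideal of which size: if $C_1$ has maximal chain ideal of size $a$, then $\{y_1,\dots,y_b\}$ is an ideal of $P$ but $\{y_1,\dots,y_{b+1}\}$ is not, which (by transitivity) forces $y_{b+1}\succ x_1$, so $x_1$ lies below exactly $\{x_2,\dots,x_p\}\cup\{y_{b+1},\dots,y_q\}$, and symmetrically $y_1$ lies below exactly $\{y_2,\dots,y_q\}\cup\{x_{a+1},\dots,x_p\}$; this determines all of $P$.

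I would then recover this data from $\KP$. By Lemma~\ref{P prime}, $K_{P'}(\mathbf x)$ is determined by $\KP$, and since $|P'|<n$ the inductive hypothesis recovers $P'$. Since $P'$ is a filter of $P$, Lemma~\ref{Filter decomp} writes $P'=C\oplus R$ with $R$ irreducible and $C$ a chain that lies (inside $P$) entirely in $C_1$ or entirely in $C_2$; by induction $R$ has a chain decomposition unique up to swapping its chains, so the only candidates for $C_1'\sqcup C_2'$ are the at most two (up to swap) chain decompositions of $P'$ in which $C$ is not split between the two sides. Moreover, $C\ne\varnothing$ forces the matching to be nontrivial: if $C\subseteq C_1$, then $\min C=x_2$ lies below $\min C_2'=y_2$, so $x_1\prec x_2\prec y_2$ and the maximal chain ideal of $C_2$ is $\{y_1\}$; in particular, if also $a=b$, then both maximal chain ideals have size $1$, so $x_1$ and $y_1$ both lie below all of $P'$, making $P$ the ordinal sum of a two-element antichain with $P'$ and hence reducible, a contradiction. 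Finally, by Lemma~\ref{chain ideal} the values $a\le b$ are determined by $\KP$, and when $a\ne b$ so is $K_{P''}(\mathbf x)$ for $P''=P\setminus I_{a+1}$, where $I_{a+1}$ is the unique $(a+1)$-element chain ideal; since $|P''|<n$, induction recovers $P''$.

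It remains to choose the correct candidate decomposition of $P'$, and here the cases split. If $a=b$, then $C=\varnothing$ by the previous paragraph, so $P'=R$ has a chain decomposition unique up to swap; and since the reconstruction formula above is symmetric in the two chains when $a=b$, the two candidates yield isomorphic posets, so both existence and uniqueness follow at once. If $a\ne b$, then $I_{a+1}$ lies in the chain (say $C_2$) with the larger maximal chain ideal, so $P''$ still contains all of $C_1$, and in particular its minimal element $x_1$, which is still minimal in $P''$ and sits below all of $C_1'$; comparing $P''$ against the two candidates singles out the correct one. The step I expect to be the real obstacle is precisely this last verification — that the triple $(P',\{a,b\},P'')$, all read off from $\KP$, determines the matching unambiguously (equivalently, that two irreducible width-two posets with the same $\KP$ cannot present the two distinct candidate reconstructions) — since it requires tracking carefully how $P''$ sits inside $P$ relative to the two chains. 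The uniqueness of the decomposition $P=C_1\sqcup C_2$ then follows from the same bookkeeping: $R$'s decomposition is unique up to swap by induction, the bottom chain $C$ cannot be split by Lemma~\ref{Filter decomp}, and $\{a,b\}$ together with $P''$ fixes the matching, so only the global swap $C_1\leftrightarrow C_2$ remains.
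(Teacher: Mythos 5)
Your setup is sound and closely parallels the paper's proof: induction on $|P|$, reduction to the irreducible case via Lemma~\ref{K of reducible}, recovery of $P'$ via Lemma~\ref{P prime}, the use of Lemma~\ref{Filter decomp} to write $P' = C\oplus R$, the recovery of $a\le b$ and (when $a\ne b$) of $P''$ via Lemma~\ref{chain ideal}, and the observation that $a=b$ forces $C=\varnothing$ and a symmetric (hence unambiguous) reconstruction. All of that is correct and matches the paper, which merely organizes the $a\ne b$ analysis into two cases according to whether $P'$ is irreducible ($C=\varnothing$) or not.

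However, the step you flag as ``the real obstacle'' is a genuine gap, and it is where essentially all of the work in the paper's proof lives. Knowing the isomorphism class of $P''$ does not by itself ``single out the correct'' candidate: when the two candidates $P$ and $Q$ happen to be isomorphic, they produce isomorphic $P''$ and $Q''$ as well, so what must actually be proved is a dichotomy --- either $P''\not\cong Q''$ (so $\KP\ne\KQ$ and there is no ambiguity), or $P''\cong Q''$ forces $P\cong Q$. The paper establishes this by embedding $J(P)$ and $J(Q)$ into $\N^2$ and analyzing how the interval corresponding to $J(P'')\cong J(Q'')$ (or to $J(R)$ where $P''=C\oplus R$) sits inside the two lattices: by the inductive uniqueness of the chain decomposition of the irreducible part, the isomorphism between the intervals $[(c,a+1),(l,m)]\subset J(P)$ and $[(a+1,c),(l,m)]\subset J(Q)$ must be either a translation/identity (which is ruled out because it would force a rank of $J(P')$ to be a singleton, contradicting irreducibility, or because $(4,0)\in J(P'')$ while $(2,2)\notin J(Q'')$ in the reducible case) or the reflection $(x,y)\leftrightarrow(y,x)$, which forces $J(P')$ to be symmetric and hence extends to an isomorphism $J(P)\cong J(Q)$. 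Without this argument (or a substitute for it), your proof does not establish that the triple $(P',\{a,b\},P'')$ determines $P$ up to isomorphism, so the induction does not close.
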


\begin{proof}
We prove this by induction on the size of $P$.  The case when $P$ has one element is trivial.  By Lemma \ref{K of reducible}, we can assume that $P$ is irreducible.

Now suppose that $P$ is an irreducible width two poset, and assume that the theorem holds for all smaller width two posets.
By Lemma \ref{P prime}, we can determine the generating function for $P'$, and by induction, $K_{P'}(\mathbf{x})$ uniquely determines $P'$.  Therefore, if $Q$ is a poset such that $\KP = \KQ$, then $P' \cong Q'$.



\medskip
\noindent
\underline{Case 1:} $P'$ is irreducible.

By induction, there is a unique decomposition $P' = C_1' \uplus C_2'$ into two chains (up to reordering). Let $|C_1'| = l-1$ and $|C_2'|=m-1$. If $P = C_1 \uplus C_2$, then $C_1'$ and $C_2'$ must be obtained from $C_1$ and $C_2$ by removing their minimal elements. Since $\KP$ determines $a$ and $b$ by Lemma~\ref{chain ideal}, there are at most two possibilities for how these minimal elements can compare to the elements in the other chain, depending on whether the maximal chain ideal in $C_1$ has $a$ elements or $b$ elements. Let $P$ and $Q$ be the two posets obtained in this way, and suppose $\KP = \KQ$. In terms of $J(P), J(Q) \subset \N^2$, the principal filter generated by $(1,1)$ in either is $J(P') \cong J(Q')$, and 
\begin{align*}
J(P) &= J(P') \cup [(0, 0), (a, 0)] \cup [(0, 0), (0, b)],\\ 
J(Q) &= J(P') \cup [(0, 0), (b, 0)] \cup [(0, 0), (0, a)].
\end{align*}

\begin{center}
\begin{tikzpicture}
 [auto,
 vertex/.style={circle,draw=black!100,fill=black!100,thick,inner sep=0pt,minimum size=1mm},scale=1.2]
\node (min) at ( 0,0) [vertex, label=below:$(0\text{,}0)$] {};
\node (2) at ( -.2,.2) [vertex] {};
\node (3) at ( .2,.2) [vertex] {};
\node (4) at ( 0,.4) [vertex] {};
\node (a) at (-.6, .6) [vertex, label=left:$(a\text{,}0)$] {};
\node (b) at (1, 1) [vertex, label=right:$(0\text{,}b)$] {};
\node (5) at (-.4, .8) [vertex] {};
\node (6) at (.8, 1.2) [vertex] {};
\node(max) at (.8, 3.2) [vertex, label=above:$(l\text{,}m)$] {};
\node(P) at (-1, 3.2) {$J(P)$};
\draw [-] (min) to (a);
\draw [-] (min) to (b);
\draw [dashed] (-1, 1.4) to (-.4, 2);
\draw [-] (-.4, 2) to (max);
\draw [-] (1.4, 2.6) to (max);
\draw [dashed] (1.8, 2.2) to (1.4, 2.6);
\draw [-] (min) to (2);
\draw [-] (min) to (3);
\draw [-] (2) to (4);
\draw [dashed] (-.6, 1) to (-1, 1.4);
\draw [-] (4) to (-.6, 1);
\draw [-] (3) to (4);
\draw [dashed] (1, 1.4) to (1.8, 2.2);
\draw [-] (4) to (1, 1.4);
\draw [-] (a) to (5);
\draw [-] (b) to (6);
\end{tikzpicture}
\hspace{2 cm}
\begin{tikzpicture}
 [auto,
 vertex/.style={circle,draw=black!100,fill=black!100,thick,inner sep=0pt,minimum size=1mm},scale=1.2]
\node (min) at ( 0,0) [vertex, label=below:$(0\text{,}0)$] {};
\node (2) at ( -.2,.2) [vertex] {};
\node (3) at ( .2,.2) [vertex] {};
\node (4) at ( 0,.4) [vertex] {};
\node (a) at (.6, .6) [vertex, label=right:$(0\text{,}a)$] {};
\node (b) at (-1, 1) [vertex, label=left:$(b\text{,}0)$] {};
\node (5) at (.4, .8) [vertex] {};
\node (6) at (-.8, 1.2) [vertex] {};
\node(max) at (.8, 3.2) [vertex, label=above:$(l\text{,}m)$] {};
\node(Q) at (-1, 3.2) {$J(Q)$};
\draw [-] (min) to (b);
\draw [-] (min) to (a);
\draw [dashed] (-1, 1.4) to (-.4, 2);
\draw [-] (-.4, 2) to (max);
\draw [-] (1.4, 2.6) to (max);
\draw [dashed] (1.8, 2.2) to (1.4, 2.6);
\draw [-] (min) to (2);
\draw [-] (min) to (3);
\draw [-] (2) to (4);
\draw [dashed] (-.6, 1) to (-1, 1.4);
\draw [-] (4) to (-.6, 1);
\draw [-] (3) to (4);
\draw [dashed] (1, 1.4) to (1.8, 2.2);
\draw [-] (4) to (1, 1.4);
\draw[-] (4) to (6);
\draw [-] (a) to (5);
\draw [-] (b) to (6);
\end{tikzpicture}
\end{center}

If $a=b$, then clearly $P \cong Q$. Otherwise, Lemma \ref{chain ideal} states that we can determine the generating functions for $P''$ and $Q''$ from $\KP = \KQ$, so by induction $P'' \cong Q''$.  In terms of $J(P)$ and $J(Q)$, we have that 
\[J(P) \supset [(0, a+1), (l, m)] \cong [(a+1, 0), (l, m)] \subset J(Q).\]
By Lemma~\ref{Filter decomp}, these subposets have the form $P'' \cong Q'' = C \oplus R$ where $C$ is a (possibly empty) chain contained in one of the two chains of $P$ or $Q$, and $R$ is an irreducible width two poset.  Let $c = |C|$.  (Note $c=0$ unless $b=a+1$.) Since $R$ is a subposet of both $P$ and $Q$,
\begin{align*}
J(R) &\cong [(c, a+1), (l, m)] \subset J(P),\\
J(R) &\cong [(a+1, c), (l, m)] \subset J(Q).
\end{align*}
\begin{center}
\begin{tikzpicture}
 [auto,
 vertex/.style={circle,draw=black!100,fill=black!100,thick,inner sep=0pt,minimum size=1mm},scale=1.4]
\node (min) at ( 0,0) [vertex] {};
\node (2) at ( -.2,.2) {};
\node (3) at ( .2,.2) {};
\node (4) at ( 0,.4) {};
\node (a) at (-1.1, .6) {$(a, 0)$};
\node (a1) at (.8, .8) [vertex, label=right:$(0\text{,}a+1)$] {};
\node (cP) at (.4, 1.2) [vertex, label=right:$(c\text{,}a+1)$] {};
\node (b) at (1, 1) {};
\node (5) at (-.4, .8) {};
\node (6) at (.8, 1.2) {};
\node(max) at (.8, 3.2) [vertex, label=above:$(l\text{,}m)$] {};
\node(R) at (.5, 2.3) {$J(R)$};
\node(P) at (-1, 3.2) {$J(P)$};

\draw [-] (-.2, .2) to (0, .4);
\draw [-] (.2, .2) to (0, .4);

\draw [-] (min) to (-.6, .6);
\draw [-] (-.6, .6) to (-.4, .8);

\draw [-] (min) to (a1);
\draw [-] (a1) to (cP);

\draw [-] (0, .4) to (-.6, 1);
\draw [-] (-.6, 1) to (-.4, 1.2); 
\draw [-] (-.4, 1.2) to (-.6, 1.4);
\draw [dashed] (-.6, 1.4) to (-.8, 1.6);
\draw [dashed] (-.8, 1.6) to (-.1, 2.3);
\draw [-] (-.1, 2.3) to (.8, 3.2);

\draw [-] (0, .4) to (.6, 1);
\draw [-] (.6, 1) to (.4, 1.2);
\draw [-] (.4, 1.2) to (.9, 1.7);
\draw [dashed] (.9, 1.7) to (1.6, 2.4);
\draw [dashed] (1.6, 2.4) to (1.1, 2.9);
\draw [-] (1.1, 2.9) to (.8, 3.2);

\draw [-] (.4, 1.2) to (-.1, 1.7);
\draw [dashed] (-.1, 1.7) to (-.4, 2);

\end{tikzpicture}
\hspace{2 cm}
\begin{tikzpicture}
 [auto,
 vertex/.style={circle,draw=black!100,fill=black!100,thick,inner sep=0pt,minimum size=1mm},scale=1.4]
\node (min) at ( 0,0) [vertex] {};
\node (2) at ( -.2,.2) {};
\node (3) at ( .2,.2) {};
\node (4) at ( 0,.4) {};
\node (a) at (1.1, .6) {$(a, 0)$};
\node (a1) at (-.8, .8) [vertex, label=left:$(a+1\text{,}0)$] {};
\node (cQ) at (-.4, 1.2) [vertex, label=left:$(a+1\text{,}c)$] {};
\node (b) at (1, 1) {};
\node (5) at (-.4, .8) {};
\node (6) at (.8, 1.2) {};
\node(max) at (.8, 3.2) [vertex, label=above:$(l\text{,}m)$] {};
\node(R) at (.3, 2.3) {$J(R)$};
\node(Q) at (-1, 3.2) {$J(Q)$};

\draw [-] (-.2, .2) to (0, .4);
\draw [-] (.2, .2) to (0, .4);

\draw [-] (min) to (.6, .6);
\draw [-] (.6, .6) to (.4, .8);

\draw [-] (min) to (a1);
\draw [-] (a1) to (cQ);

\draw [-] (0, .4) to (-.6, 1);
\draw [-] (-.6, 1) to (-.4, 1.2); 
\draw [-] (-.4, 1.2) to (-.6, 1.4);
\draw [dashed] (-.6, 1.4) to (-.8, 1.6);
\draw [dashed] (-.8, 1.6) to (-.1, 2.3);
\draw [-] (-.1, 2.3) to (.8, 3.2);

\draw [-] (0, .4) to (.6, 1);
\draw [-] (.6, 1) to (.4, 1.2);
\draw [-] (.4, 1.2) to (.9, 1.7);
\draw [dashed] (.9, 1.7) to (1.6, 2.4);
\draw [dashed] (1.6, 2.4) to (1.1, 2.9);
\draw [-] (1.1, 2.9) to (.8, 3.2);

\draw (-.4, 1.2) to (.9, 2.5);
\draw [dashed] (.9, 2.5) to (1.2, 2.8);

\end{tikzpicture}
\end{center}

Both of these embeddings of $J(R)$ correspond to a partition of $R$ into two chains.  By induction, since $R$ is irreducible, the partition of $R$ into two chains is unique up to reordering, which corresponds to a reflection of $J(R)$.

If $J(R)$ is embedded in the same way in both $J(P)$ and $J(Q)$, then $c=a+1$. But then in $J(P') \cong J(Q')$, $(a+1,a+1)$ is the only element in its rank, contradicting irreducibility.

Otherwise the embeddings of $J(R)$ in $J(P)$ and $J(Q)$ are reflections of one another, that is, the isomorphism between $[(c, a+1), (l, m)] \subset J(P)$ and $[(a+1, c), (l, m)] \subset J(Q)$ must be $(x, y) \leftrightarrow (y, x)$. But this implies that $J(P')$ is symmetric, so we can extend this isomorphism to get $J(P) \cong J(Q)$. Hence $P \cong Q$, and the isomorphism corresponds to a reordering of the two chains.

\medskip
\noindent
\underline{Case 2:} $P'$ is reducible.

By Lemma \ref{Filter decomp}, $P' = C \oplus R$ where $C$ is a nonempty chain and $R$ is irreducible.  Since $R$ is irreducible it can be partitioned uniquely into two chains $A$ and $B$.
Suppose $|A| = j-1$, $|B| = k-1$, and $|C| = c \geq 1$. If $P = C_1 \uplus C_2$, with $A \subset C_1$ and $B \subset C_2$, then by Lemma~\ref{Filter decomp}, either $C \subset C_1$ or $C \subset C_2$. Again, by Lemma~\ref{chain ideal}, $a$ and $b$ are determined. In fact, we must have $a=1$ (the maximal chain ideal in the chain not containing $C$ can only have size $1$) and $b>1$.

There are again two possibilities, so let $P$ be the poset where $C \subset C_1$, and let $Q$ be the poset where $C \subset C_2$.  The subposet $J(R)$ must be isomorphic to both of the intervals $[(c+1, 1), (c+j, k)] \subset J(P)$ and $[(1, c+1), (j, c+k)] \subset J(Q)$.

\begin{center}
\begin{tikzpicture}
 [auto,
 vertex/.style={circle,draw=black!100,fill=black!100,thick,inner sep=0pt,minimum size=1mm},scale=1.4]
\node(JP) at (-1, 3.6) {$J(P)$};
\node(R) at (.1, 2) {$J(R)$};
\node (min) at ( 0,0) [vertex, label=below:$(0\text{,}0)$] {};
\node (5) at (-.4, .4) [vertex, label=left:$(2\text{,}0)$] {};
\node(max) at (.8, 3.2) [vertex, label=above:$(c+j\text{,}k)$] {};
\node (c) at (-.4, .8) [vertex, label=right:$(c+1\text{,}1)$] {};
\draw [-] (0,0) to (-.6, .6);
\draw [-] (-.6, .6) to (c);
\draw [-] (c) to (-.7, 1.1);
\draw [dashed] (-.7, 1.1) to (-1, 1.4);
\draw [-] (0,0) to (.2, .2);
\draw [-] (0, .4) to (c);
\draw [-] (-.6, .6) to (-.9, .9);
\draw [dashed] (-.9, .9) to (-1.2, 1.2);
\draw [-] (5) to (-.2, .6);
\draw [dashed] (-1.2, 1.2) to (-.4, 2);
\draw [-] (-.4, 2) to (max);
\draw [-] (1.1, 2.9) to (max);
\draw [-] (c) to (.9, 2.1);
\draw [dashed] (.9,2.1) to (1.4, 2.6);
\draw [dashed] (1.4, 2.6) to (1.1, 2.9);
\draw [-] (min) to (-.2, .2);
\draw [-] (min) to (.2, .2);
\draw [-] (-.2, .2) to (0, .4);
\draw [-] (.2, .2) to (0, .4);
\end{tikzpicture}
\hspace{2 cm}
\begin{tikzpicture}
 [auto,
 vertex/.style={circle,draw=black!100,fill=black!100,thick,inner sep=0pt,minimum size=1mm},scale=1.4]
\node(JQ) at (-.2, 3.6) {$J(Q)$};
\node(R) at (1, 2) {$J(R)$};
\node (min) at (0,0) [vertex, label=below:$(0\text{,}0)$] {};
\node (5) at (.4, .4) [vertex, label=right:$(0\text{,}2)$] {};
\node (c) at (.4, .8) [vertex, label=left:$(1\text{,}c+1)$] {};
\node (max) at (1.6, 3.2) [vertex, label=above:$(j\text{,}c+k)$] {};
\draw [-] (min) to (-.2, .2);
\draw [-] (-.2, .2) to (c);
\draw [-] (.2, .2) to (0, .4);
\draw [-] (min) to (.6, .6);
\draw [-] (.6, .6) to (1.5, 1.5);
\draw [-] (5) to (.2, .6);
\draw [dashed] (1.5, 1.5) to (2.4, 2.4);
\draw [-] (c) to (1.3, 1.7);
\draw [dashed] (1.3, 1.7) to (2.2, 2.6);
\draw [dashed] (2.4, 2.4) to (2, 2.8);
\draw [-] (2, 2.8) to (max);
\draw [dashed] (.2, 1) to (-.2, 1.4);
\draw [-] (.6, .6) to (.2, 1);
\draw [-] (.6, 2.2) to (max);
\draw [dashed] (-.2, 1.4) to (.6, 2.2);
\end{tikzpicture}
\end{center}

By Lemma \ref{chain ideal}, we have that $J(P'') \cong J(Q'')$, that is to say,
\[J(P) \supset [(2, 0), (c+j, k)] \cong [(0, 2), (j, c+k)] \subset J(Q).\] Also $P''$ and $Q''$ must be irreducible. (If they were reducible, then there would be only one rank $3$ element of $J(P)$ and $J(Q)$, namely, $(2, 1)$ and $(1, 2)$, respectively.) This can only happen if the isomorphism is a translation or if it is a reflection.

If the isomorphism is a translation, then $c=2$, and the translation is $(x, y) \leftrightarrow (x-2, y+2)$.  However, $J(P'')$ and $J(Q'')$ are not translations of each other.  We know this because $(3, 1) \in J(P)$ and $P$ is irreducible, so we must also have the rank $4$ element $(4,0) \in J(P'')$, but $(2, 2) \notin J(Q'')$.  Therefore this possibility cannot happen.

If the isomorphism is a reflection, then $j=k$, and the isomorphism is $(x, y) \leftrightarrow (y, x)$.  Since 
\begin{align*}
J(P) &= J(P'') \cup \{(0, 0), (1, 0), (0, 1), (1, 1)\},\\
J(Q) &= J(Q'') \cup \{(0, 0), (1, 0), (0, 1), (1, 1)\},
\end{align*}
the isomorphism between $J(P'')$ and $J(Q'')$ can be extended to $J(P)$ and $J(Q)$. This isomorphism corresponds to a reordering of the chains $C_1$ and $C_2$.
\end{proof}

Theorem \ref{width2} tells us that any poset $P$ whose shape $\lambda$ has at most two parts has a unique $P$-partition generating function.

\subsection{Hook shaped posets}
A partition $\lambda$ is said to be \emph{hook shaped} if $\lambda_2 \leq1$.  Hook shaped partitions are therefore of the form $\lambda = (\lambda_1, 1, 1, \dots, 1)$. In this section, we will show that a poset whose shape is a hook is determined not just by $\KP$ but by $\supp_L(\KP)$.
\begin{thm}\label{Hook Shaped}
If $\sh(P)$ is hook shaped, then $P$ is determined by $\supp_L(\KP)$, that is, if $\supp_L(\KP) = \supp_L(\KQ)$, then $P \cong Q$.
\end{thm}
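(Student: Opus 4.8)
The plan is to show that $\supp_L(\KP)$ already determines $P$ by extracting from it two invariants that together pin down any hook-shaped poset: the shape $\sh(P) = (\lambda_1, 1^{w-1})$ and the multiset of jump pairs $\{\jumppair(x) : x \in P\}$. The first is recoverable from Theorem~\ref{shape}, the second from Lemma~\ref{jumppair}. All the real work is a structural lemma asserting that these two invariants reconstruct $P$ up to isomorphism.

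\emph{Structure of hook-shaped posets.} Suppose $\sh(P) = (\lambda_1, 1^{w-1})$, so $n = \lambda_1 + w - 1$, the longest chain of $P$ has $\lambda_1$ elements, and (when $w \ge 2$) the largest union of two disjoint chains has exactly one more element than the longest chain. Fix a longest chain $C = (c_1 \prec c_2 \prec \cdots \prec c_{\lambda_1})$ and set $B = P \setminus C$. If $B$ contained two comparable elements $x \prec y$, then $C$ together with $\{x,y\}$ would be two disjoint chains covering $\lambda_1 + 2$ elements, a contradiction; hence $B$ is an antichain, of size $w-1$. For $b \in B$, its down-set meets $C$ in an initial segment $\{c_1, \dots, c_{p_b}\}$ and its up-set meets $C$ in a final segment $\{c_{\lambda_1 - q_b + 1}, \dots, c_{\lambda_1}\}$ (each is a down- resp.\ up-closed subset of the chain $C$). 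Since $B$ is an antichain, the down-set of $b$ in all of $P$ is just $\{c_1, \dots, c_{p_b}\}$ --- itself a chain --- and $c_1$ is minimal in $P$ (any $b' \prec c_1$ would yield the chain $b' \prec c_1 \prec \cdots \prec c_{\lambda_1}$ of length $\lambda_1 + 1$); therefore $\jump(b) = p_b$, and dually $\upjump(b) = q_b$. The same computation gives $\jumppair(c_j) = (j-1, \lambda_1 - j)$ for every $j$. Two consequences follow: (i) all relations of $P$ are determined by $\lambda_1$, $w$, and the multiset $\{\jumppair(b) : b \in B\}$, since relations inside $C$ are the chain order, there are none inside $B$, and the relations between $b \in B$ and $C$ are exactly $c_j \prec b$ for $j \le p_b$ and $b \prec c_j$ for $j > \lambda_1 - q_b$; and (ii) the full multiset of jump pairs splits as
\[ \{\jumppair(x) : x \in P\} = \{(j-1,\lambda_1-j) : 1 \le j \le \lambda_1\} \uplus \{\jumppair(b) : b \in B\}. \]
In particular, (ii) shows the multiset $\{\jumppair(b) : b \in B\}$ is independent of the choice of longest chain $C$ (the left side is intrinsic and the ``chain part'' on the right depends only on $\lambda_1$), so (i) genuinely recovers $P$.

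\emph{Reconstruction.} Now assume $\supp_L(\KP) = \supp_L(\KQ)$. By Theorem~\ref{shape}, $\sh(Q) = \sh(P) = (\lambda_1, 1^{w-1})$, so $Q$ is also hook-shaped with the same $\lambda_1$ and $w$. By Lemma~\ref{jumppair}, $P$ and $Q$ have the same multiset of jump pairs. Removing the common chain part $\{(j-1, \lambda_1 - j) : 1 \le j \le \lambda_1\}$ as in (ii), we conclude that $P$ and $Q$ have the same multiset $\{\jumppair(b) : b \in B\}$. By (i), this data determines $P$ and $Q$ up to isomorphism, so $P \cong Q$. The degenerate cases $w = 1$ (where $B = \varnothing$ and $P$ is a chain) and $\lambda_1 = 1$ (where $C = \{c_1\}$ and $P$ is an antichain) are handled by the same argument.

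\emph{Main obstacle.} The only genuine content is the structural lemma, and within it the two points needing care: that $P \setminus C$ must be an antichain --- this is exactly where the hook hypothesis enters, through the bound on unions of two disjoint chains --- and that $\jump$ and $\upjump$ read off precisely the attachment data $p_b, q_b$, in a way that is manifestly independent of the chosen longest chain. Once these are in place, the proof is just bookkeeping with the invariants supplied by Theorem~\ref{shape} and Lemma~\ref{jumppair}, with no induction required.
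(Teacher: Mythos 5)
Your proof is correct and follows essentially the same route as the paper: decompose the hook-shaped poset into a maximum chain and a complementary antichain, observe that the jump pair of each antichain element records exactly how it attaches to the chain, and invoke Theorem~\ref{shape} and Lemma~\ref{jumppair} to recover this data from $\supp_L(\KP)$. Your write-up is somewhat more careful than the paper's (in verifying that $P\setminus C$ is an antichain, that $\jumppair(c_j)=(j-1,\lambda_1-j)$, and that the chain part of the jump-pair multiset can be cleanly subtracted off), but the underlying argument is the same.
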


\begin{proof}
If $\sh(P)$ is hook shaped, then $P$ can be expressed as the union of a chain $C$ and an antichain $A$ where $|C \cap A| = 1$.  The jump pair of each element in $C$ is determined by its position in the chain.  Each element of $A$ can cover at most one element of $C$ and is covered by at most one element of $C$.  For each $a \in A$, $\jumppair(a)$ is determined by the element that $a$ covers and the element that covers $a$.  This implies that hook shaped posets are determined by the jump pairs of their elements.  Since the multiset of $\jumppair(a)$ for all $a$ is determined by $\supp_L(\KP)$ by Lemma \ref{jumppair}, it follows that if $\sh(P)$ is hook shaped and $\supp_L(\KP) = \supp_L(\KQ)$, then $P \cong Q$.
\end{proof}

\begin{cor} If $\sh(P)$ is hook shaped and $K_P(\mathbf{x}) = K_Q(\mathbf{x})$, then $P \cong Q$.
\end{cor}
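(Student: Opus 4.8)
The plan is to obtain this as an immediate consequence of Theorem~\ref{Hook Shaped}. The only thing to check is that the hypothesis $\KP = \KQ$ is strictly stronger than the hypothesis of that theorem. Indeed, a quasisymmetric function determines its own expansion in the fundamental basis $\{L_\alpha\}$, and hence determines the set of compositions $\alpha$ with $h_{D(\alpha)} \neq 0$; by definition this set is exactly $\supp_L$. Therefore $\KP = \KQ$ forces $\supp_L(\KP) = \supp_L(\KQ)$.

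Once this is in hand, the argument finishes in one line: since $\sh(P)$ is hook shaped by assumption, Theorem~\ref{Hook Shaped} applies and yields $P \cong Q$. (If desired, one can also invoke the corollary following Theorem~\ref{shape} to note that $\sh(Q) = \sh(P)$ is likewise hook shaped, but Theorem~\ref{Hook Shaped} as stated does not require this.) There is no real obstacle: all of the combinatorial content already lives in Theorem~\ref{Hook Shaped}, and beneath it in Lemma~\ref{jumppair}, which lets one recover the multiset of jump pairs of the elements of a hook-shaped poset from $\supp_L(\KP)$ and thereby reconstruct $P$.
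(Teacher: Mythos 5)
Your proof is correct and matches the paper's intent exactly: the corollary is an immediate consequence of Theorem~\ref{Hook Shaped}, since equality of quasisymmetric functions trivially forces equality of their $L$-supports. The paper leaves this corollary without a written proof for precisely this reason.
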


\begin{example}\label{hook example}
Consider the following two hook shaped posets.

\begin{center}

\begin{tikzpicture}
 [auto,
 vertex/.style={circle,draw=black!100,fill=black!100,thick,inner sep=0pt,minimum size=1mm}, scale =.7]
\node(P) at (-2, 6) {$P =$};
\node (1) at (0,0) [vertex,label=left:$1$] {};
\node (2) at (0,1) [vertex,label=left:$2$] {};
\node (3) at (0,2) [vertex,label=left:$3$] {};
\node (4) at (0,3) [vertex,label=left:$4$] {};
\node (5) at (1,3) [vertex,label=left:$5$] {};
\node (6) at (2,3) [vertex,label=left:$6$] {};
\node (7) at (3,3) [vertex,label=left:$7$] {};
\node (8) at (4,3) [vertex,label=left:$8$] {};
\node (9) at (5,3) [vertex,label=left:$9$] {};
\node (10) at (0,4) [vertex,label=left:$10$] {};
\node (11) at (0,5) [vertex,label=left:$11$] {};
\node (12) at (0,6) [vertex,label=left:$12$] {};
\node (13) at (0,7) [vertex,label=left:$13$] {};
\draw [-] (1) to (2);
\draw [-] (1) to (9);
\draw [-] (2) to (7);
\draw [-] (2) to (8);
\draw [-] (2) to (3);
\draw [-] (3) to (4);
\draw [-] (3) to (5);
\draw [-] (3) to (6);
\draw [-] (4) to (10);
\draw [-] (5) to (10);
\draw [-] (6) to (12);
\draw [-] (7) to (13);
\draw [-] (8) to (13);
\draw [-] (9) to (11);
\draw [-] (10) to (11);
\draw [-] (11) to (12);
\draw [-] (12) to (13);
\end{tikzpicture}
\begin{tikzpicture}
 [auto,
 vertex/.style={circle,draw=black!100,fill=black!100,thick,inner sep=0pt,minimum size=1mm}, scale = .7]
\node(Q) at (-2, 6) {$Q =$};
\node (1) at (0,0) [vertex,label=left:$1$] {};
\node (2) at (0,1) [vertex,label=left:$2$] {};
\node (3) at (0,2) [vertex,label=left:$3$] {};
\node (4) at (0,3) [vertex,label=left:$4$] {};
\node (5) at (1,3) [vertex,label=left:$5$] {};
\node (6) at (2,3) [vertex,label=left:$6$] {};
\node (7) at (3,3) [vertex,label=left:$7$] {};
\node (8) at (4,3) [vertex,label=left:$8$] {};
\node (9) at (5,3) [vertex,label=left:$9$] {};
\node (10) at (0,4) [vertex,label=left:$10$] {};
\node (11) at (0,5) [vertex,label=left:$11$] {};
\node (12) at (0,6) [vertex,label=left:$12$] {};
\node (13) at (0,7) [vertex,label=left:$13$] {};
\draw [-] (1) to (2);
\draw [-] (1) to (9);
\draw [-] (2) to (7);
\draw [-] (2) to (8);
\draw [-] (2) to (3);
\draw [-] (3) to (4);
\draw [-] (3) to (5);
\draw [-] (3) to (6);
\draw [-] (4) to (10);
\draw [-] (5) to (10);
\draw [-] (6) to (12);
\draw [-] (7) to (13);
\draw [-] (8) to (11);
\draw [-] (9) to (13);
\draw [-] (10) to (11);
\draw [-] (11) to (12);
\draw [-] (12) to (13);
\end{tikzpicture}
\end{center}
The partition generating functions for these posets do not have the same $L$-support because the element $9 \in P$ has jump 1 and up-jump 3, but no element in $Q$ has this jump pair.
\end{example}

\subsection{Nearly hook shaped posets}

In this section, we will show that if the shape of a poset $P$ is \emph{nearly hook shaped}, that is, if $\sh(P) = (\lambda_1, 2, 1, \dots, 1)$, then $P$ is uniquely determined by $\KP$.

\begin{lemma}
	Any finite poset $P$ has a unique antichain $A$ of maximum size such that any other antichain of maximum size is contained in the order ideal $I(A)$ generated by $A$.
\end{lemma}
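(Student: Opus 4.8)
The plan is to show that the set $\mathcal{A}$ of maximum-size antichains of $P$, partially ordered by $A \le B$ if and only if $I(A) \subseteq I(B)$, is closed under a natural join operation; since $\mathcal{A}$ is finite and nonempty it then has a (necessarily unique) maximum element, and that antichain is the one we want. This is essentially the classical fact that the maximal antichains of a finite poset form a distributive lattice, but I would give a direct argument using only what is needed here. The key preliminary remark is that if $\overline I$ denotes the set of maximal elements of an order ideal $I$, then $\overline{I(A)} = A$ for any antichain $A$, so an antichain is recovered from the ideal it generates.

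The technical heart is the following closure statement: if $A, B \in \mathcal{A}$, then the set of maximal elements of the order ideal $U := I(A) \cup I(B)$ is again a maximum antichain (and likewise for $W := I(A) \cap I(B)$, though only the join is needed). Write $w = \lambda_1'$ for the width. Since $\overline U$ and $\overline W$ are antichains, $|\overline U|, |\overline W| \le w$, so it suffices to prove $|\overline U| + |\overline W| \ge 2w = |A| + |B|$. I would establish this by exhibiting disjoint subsets of $\overline U$ and of $\overline W$ accounting for all of $A \sqcup B$, using three observations: (i) if $a \in A \setminus I(B)$ then no element of $I(B)$ lies above $a$ (such an element would lie below some $b \in B$, forcing the contradiction $a < b$), so $a \in \overline U$; (ii) if $a \in A \cap I(B)$ then $a \in W$ and $a$ is maximal in $I(A) \supseteq W$, so $a \in \overline W$; (iii) if $a \in A \cap B$ then $a$ is maximal in both $I(A)$ and $I(B)$, hence $a \in \overline U$. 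The analogous statements hold with the roles of $A$ and $B$ interchanged. Combining these — the sets $\{a\in A: a\notin I(B)\}$, $\{b\in B: b\notin I(A)\}$, $A\cap B$ are pairwise disjoint subsets of $\overline U$, while $A\cap I(B)$ and $B\cap I(A)$ are subsets of $\overline W$ meeting in exactly $A\cap B$ — a short inclusion–exclusion count yields $|\overline U| + |\overline W| \ge |A| + |B|$, forcing equality everywhere, so $\overline U, \overline W \in \mathcal{A}$.

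Granting the closure property, the conclusion is formal: choose any element $A$ of $\mathcal{A}$ that is maximal with respect to $\le$. For every maximum antichain $B$, the join $\overline{I(A) \cup I(B)}$ lies in $\mathcal{A}$, and its generated ideal $I(A) \cup I(B)$ contains $I(A)$; maximality of $A$ forces $I(A) \cup I(B) = I(A)$, i.e. $B \subseteq I(B) \subseteq I(A)$. Uniqueness follows since two antichains with this property would generate the same ideal and hence coincide. I expect the inequality $|\overline U| + |\overline W| \ge 2w$ to be the only real obstacle: a naive pairing argument mishandles the overlap $A \cap B$ and gives only $|\overline U| + |\overline W| \ge 2w - |A\cap B|$, and the fix is precisely observation (iii), namely that elements of $A \cap B$ are maximal in $U$ \emph{and} in $W$ and so may legitimately be counted on both sides; everything else is bookkeeping.
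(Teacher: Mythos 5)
Your proof is correct, but it takes a genuinely different route from the paper's. The paper invokes Dilworth's theorem: fix a partition of $P$ into $w=\lambda_1'$ chains, note that every maximum antichain meets each chain in exactly one element, and define $A$ by taking, in each chain, the largest element that lies in some maximum antichain. (That this $A$ is an antichain is left implicit there; it follows because if $a_i<a_j$ with $a_j$ in a maximum antichain $A_j$, then the element of $A_j$ on the chain of $a_i$ is at most $a_i<a_j$, contradicting incomparability within $A_j$.) You instead prove directly that the maximum antichains are closed under the join $A\vee B=\overline{I(A)\cup I(B)}$, via the counting argument $|\overline{U}|+|\overline{W}|\geq|A|+|B|=2w$ combined with $|\overline{U}|,|\overline{W}|\leq w$; this is the classical fact that the maximum antichains of a finite poset form a lattice, and your bookkeeping (in particular the observation that elements of $A\cap B$ are maximal in both $U$ and $W$ and so are legitimately counted twice) is sound. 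I checked the three observations and the disjointness claims; they all hold, the degenerate case $W=\varnothing$ is ruled out by the same inequality, and the passage from join-closure to existence and uniqueness of the top element is formal, using $\overline{I(A)}=A$. What each approach buys: the paper's argument is shorter but rests on Dilworth's theorem as a black box and elides the antichain verification; yours is self-contained, avoids Dilworth entirely, and yields the stronger structural fact (the lattice of maximum antichains) as a byproduct, at the cost of a somewhat longer count.
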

\begin{proof}
	By Dilworth's theorem, the minimum number of chains into which $P$ can be partitioned is the maximum size of an antichain of $P$. Hence in any such partition, each chain must contain one element from every antichain of maximum size. Then let $A$ consist of the largest element in each chain that is contained in some antichain of maximum size. This is our desired antichain: if $x,y \in A$, then we cannot have $x \succeq y$ since $x$ is incomparable to some $y' \preceq y$.
\end{proof}

Let $A$ be the unique maximum antichain of $P$ as described above. By Theorem~\ref{Anti}, both $|A| = \lambda_1'$ and $|I(A)| = m$ are determined by $\KP$.  Let $P^-$ be the subposet of $P$ consisting of elements less than $A$ in $P$, so that $P^- = I(A) \backslash A$.  The partition generating function for $P^-$ is 
\[K_{P^-}(\mathbf{x}) = (id \otimes \min_{\lambda_1'}\nolimits)(\Delta_{m-\lambda_1', n-(m-\lambda_1')}(\KP)).\]
When $\lambda=\sh(P)$ is nearly hook shaped, $P^-$ must either be a chain, or it can be partitioned into a chain and a single element $x$.  Since the width of $P^-$ is less than or equal to $2$, $P^-$ is determined by $K_{P^-}(\mathbf{x})$ and hence by $\KP$.

Similarly, let $P^+$ be the subposet of $P$ consisting of elements greater than an element of $A$ in $P$.  As with $P^-$, the width of $P^+$ is less than or equal to $2$, so $P^+$ is determined by $K_{P^+}(\mathbf{x})$, which is also determined by $\KP$ by Corollary~\ref{P without I}.

Since $\lambda$ is nearly hook shaped, it cannot be the case that both $P^-$ and $P^+$ have width two.  We will say that:
\begin{enumerate}[(i)]
 \item $P$ is Type 1 if width($P^-$) = 2 and width($P^+$) $\leq$ 1,
 \item $P$ is Type 2 if width($P^-$) $\leq$ 1 and width($P^+$) = 2, 
 \item $P$ is Type 3 if width($P^-$) $\leq$ 1 and  width($P^+$) $\leq$ 1.
\end{enumerate}

\begin{example}
The three posets below each have shape $(5, 2, 1, 1)$.  The poset on left is Type $1$ with maximal antichain $\{5, 6, 7, 8\}$.  The poset in the center is Type $2$ with maximal antichain $\{2, 3, 4, 5\}$.  The poset on the right is Type $3$ with maximal antichain $\{3, 4, 5, 6\}$.
\begin{center}
\begin{tikzpicture}
 [auto,
 vertex/.style={circle,draw=black!100,fill=black!100,thick,inner sep=0pt,minimum size=1mm}, scale =.7]
\node (1) at (0,0) [vertex,label=left:$1$] {};
\node (2) at (-1,0) [vertex,label=left:$2$] {};
\node (3) at (0,1) [vertex,label=right:$3$] {};
\node (4) at (0,2) [vertex,label=left:$4$] {};
\node (5) at (0,3) [vertex,label=left:$5$] {};
\node (6) at (-1,3) [vertex,label=left:$6$] {};
\node (7) at (1,3) [vertex,label=left:$7$] {};
\node (8) at (2,3) [vertex,label=right:$8$] {};
\node (9) at (0,4) [vertex,label=left:$9$] {};
\draw [-] (1) to (3);
\draw [-] (3) to (4);
\draw [-] (4) to (5);
\draw [-] (5) to (9);
\draw [-] (2) to (4);
\draw [-] (4) to (6);
\draw [-] (3) to (8);
\draw [-] (7) to (9);
\draw [-] (8) to (9);
\end{tikzpicture}
\hspace{1.5 cm}
\begin{tikzpicture}
 [auto,
 vertex/.style={circle,draw=black!100,fill=black!100,thick,inner sep=0pt,minimum size=1mm}, scale =.7]
\node (1) at (0,4) [vertex,label=left:$9$] {};
\node (2) at (-1,4) [vertex,label=left:$8$] {};
\node (3) at (0,3) [vertex,label=right:$7$] {};
\node (4) at (0,2) [vertex,label=left:$6$] {};
\node (5) at (0,1) [vertex,label=left:$5$] {};
\node (6) at (-1,1) [vertex,label=left:$4$] {};
\node (7) at (1,1) [vertex,label=left:$3$] {};
\node (8) at (2,1) [vertex,label=right:$2$] {};
\node (9) at (0,0) [vertex,label=left:$1$] {};
\draw [-] (1) to (3);
\draw [-] (3) to (4);
\draw [-] (4) to (5);
\draw [-] (5) to (9);
\draw [-] (2) to (4);
\draw [-] (4) to (6);
\draw [-] (3) to (8);
\draw [-] (7) to (9);
\draw [-] (8) to (9);
\end{tikzpicture}
\hspace{1.5 cm}
\begin{tikzpicture}
 [auto,
 vertex/.style={circle,draw=black!100,fill=black!100,thick,inner sep=0pt,minimum size=1mm}, scale =.7]
\node (1) at (0,0) [vertex,label=left:$1$] {};
\node (2) at (0,1) [vertex,label=left:$2$] {};
\node (3) at (1,1) [vertex,label=right:$3$] {};
\node (4) at (2,1) [vertex,label=right:$4$] {};
\node (5) at (-2,2) [vertex,label=left:$5$] {};
\node (6) at (-1,2) [vertex,label=left:$6$] {};
\node (7) at (0,2) [vertex,label=left:$7$] {};
\node (8) at (0,3) [vertex,label=left:$8$] {};
\node (9) at (0,4) [vertex,label=left:$9$] {};
\draw [-] (1) to (2);
\draw [-] (2) to (7);
\draw [-] (3) to (7);
\draw [-] (4) to (7);
\draw [-] (2) to (5);
\draw [-] (2) to (6);
\draw [-] (7) to (8);
\draw [-] (8) to (9);
\end{tikzpicture}
\end{center}
\end{example}

Since we can determine the widths of $P^-$ and $P^+$ from $K_{P^-}(\mathbf x)$ and $K_{P^+}(\mathbf x)$, the type of $P$ is determined by $\KP$. Note that the dual of a Type 2 poset is Type 1, so if we can show that Type 1 posets are determined by their $P$-partition generating functions, then Type 2 posets will be as well. (To get the generating function for the dual of a poset $P$, reverse each composition in the expansion of $\KP$ in the $\{M_\alpha\}$-basis.)

\begin{lemma} \label{below antichain}
If $P$ is a Type 1 poset, then $I(A)$ is determined by $\KP$ up to isomorphism.

Moreover, if $x \in P^-$ does not lie in a maximum length chain of $P$, then the number of elements in $A$ that only cover $x$ is determined by $\KP$.
\end{lemma}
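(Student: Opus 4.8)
The plan is to first pin down $I(A)$ as an abstract poset, and then, working inside $I(A)$, isolate the relevant covering data. Since $P$ is Type 1, $P^-$ has width $2$, so by Theorem~\ref{width2} the poset $P^-$ is determined (up to isomorphism, and with a canonical chain decomposition) by $K_{P^-}(\mathbf{x})$, which in turn is determined by $\KP$ via the displayed formula $(id \otimes \min_{\lambda_1'})\Delta_{m-\lambda_1', n-(m-\lambda_1')}(\KP)$. Now $I(A)$ is obtained from $P^-$ by adjoining the antichain $A$ on top: each $a \in A$ is a maximal element of $I(A)$ covering some down-set of $P^-$. The first main step is to show that these down-sets are in fact principal, i.e.\ each $a \in A$ covers at most one element of $P^-$. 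This follows from the shape constraint: if some $a$ covered two incomparable elements of $P^-$, together with the fact that $P^-$ has width $2$ and $A$ has width $\lambda_1'$, one produces a union of antichains exceeding the bound imposed by $\sh(P) = (\lambda_1, 2, 1, \dots, 1)$ — concretely, $a$ together with a suitable antichain below it and the rest of $A$ would give too large a two-antichain union. (The same type of argument already appeared informally in the paragraph preceding the lemma, where it is asserted that $P^-$ has width $\le 2$.)

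Granting that each $a \in A$ covers at most one element of $P^-$, the isomorphism type of $I(A)$ is completely encoded by the function $c \colon P^- \sqcup \{*\} \to \Z_{\ge 0}$ recording, for each $x \in P^-$, how many elements of $A$ have $x$ as their unique covered element, together with the number of minimal elements of $A$ (those covering nothing), which is $|A|$ minus $\sum_x c(x)$. The second step is therefore to recover this function $c$ from $\KP$. The key observation is that for $x \in P^-$, the quantity $c(x)$ relates to counting ideals of $I(A)$: specifically, $c(x)$ counts maximal elements of $I(A)$ whose unique lower cover is $x$, and an ideal of $P$ that equals $I(A) \setminus \{a\}$ for such an $a$ is detected by its size $m-1$, its number of maximal elements, and the structure of its complement's minimal elements. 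More directly, one can apply the $\antikij$ and $\min$/$\max$ machinery of Section 3 to the induced subposet generated appropriately, or use Lemma~\ref{jumpideal} / Corollary~\ref{P without I} on subposets: removing a principal ideal of the right size and counting, via $\max_i \otimes \min_j$ applied to $\Delta$, separates the $a$'s by which $x$ they sit over, provided $x$ is distinguishable inside $P^-$.

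This is exactly where the hypothesis ``$x$ does not lie in a maximum length chain of $P$'' enters, and it is the main obstacle. The difficulty is that $P^-$, although determined as an abstract poset, has automorphisms; if two elements $x, x'$ of $P^-$ are interchanged by an automorphism of $P^-$ (which happens precisely when they are symmetric in the width-$2$ structure, e.g.\ at the same height on the two chains near where the chains split off), then $\KP$ cannot tell apart ``$a$ covers $x$'' from ``$a$ covers $x'$.'' The restriction to $x$ not on a longest chain of $P$ rules out the problematic symmetric pair: the element(s) on the longest chain through $P$ sit in a canonically identifiable position (the bottom of the ``long leg'' of the nearly-hook), and the other branch element is the one whose covering data we are allowed to leave ambiguous. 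So the plan for the final step is: use the longest-chain length of $P$ (which by Theorem~\ref{shape}, via $\sh(P)$, and by the flag data is determined by $\KP$) to mark out the long chain inside $I(A)$; every $x \in P^-$ off this chain then lies in a position that is rigid — its principal ideal size and up-jump within $I(A)$, hence within an appropriate induced subposet of $P$, determine $x$ uniquely — and for such $x$ the count $c(x)$ of elements of $A$ covering only $x$ is extracted by a composition of $\Delta_{k,n-k}$ with $\max_1 \otimes (\text{something counting that the complement has the right minimal structure})$, all of which is a function of $\KP$ by the results of Section~3. I expect verifying this rigidity — that off-longest-chain elements of $P^-$ are pinned down by ideal-size/up-jump statistics computable from $\KP$ — to require the most care, since it is a statement about the specific combinatorics of width-$\le 2$ pieces glued to an antichain rather than a formal Hopf-algebraic manipulation.
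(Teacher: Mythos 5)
There is a genuine gap, and it occurs at the very first step. You claim that each $a \in A$ covers at most one element of $P^-$, arguing that otherwise the shape constraint would be violated. This is false: if $a$ covers a two-element antichain $\{u,v\} \subseteq P^-$, the resulting union of two antichains $A \cup \{u,v\}$ has exactly $\lambda_1' + 2 = \lambda_1' + \lambda_2'$ elements, which is precisely the maximum $a_2$ permitted by the nearly hook shape $(\lambda_1, 2, 1, \dots, 1)$ --- no contradiction arises. Elements of $A$ covering a two-element antichain of $P^-$ genuinely occur, and the paper's proof is organized around exactly this: it introduces $B(S)$ for \emph{every} antichain $S \subseteq P^-$, including the two-element antichains $\{i, x\}$, and the bulk of the work consists of disentangling the counts $b(\{i\})$, $b(\{i,x\})$, $b(\{x\})$, and $b(\{i^*\})$ from the jump and principal-order-ideal statistics. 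Your proposed encoding of $I(A)$ by a function $c \colon P^- \sqcup \{*\} \to \Z_{\ge 0}$ recording only unique lower covers therefore loses essential information and cannot determine $I(A)$.

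Beyond this, the remaining steps are stated as a plan rather than an argument: ``apply the $\antikij$ and $\min/\max$ machinery\dots or use Lemma~\ref{jumpideal} / Corollary~\ref{P without I} on subposets'' does not identify which ideals are unique (a hypothesis Corollary~\ref{P without I} requires), and the genuinely delicate point --- separating $b(\{i^*\})$ from $b(\{x\})$ when only their sum, or only the unordered pair of values, is visible --- is not addressed. The paper resolves this by passing to the subposet $\hat P$ of elements of jump at least $i^*-1$, locating a \emph{unique} ideal $I$ of a specified size and number of maximal elements, extracting $K_{\hat P \setminus I}(\mathbf x)$, and using the fact that $\hat P \setminus I$ is hook shaped (hence determined by Theorem~\ref{Hook Shaped}) so that the length of its longest chain distinguishes the two scenarios. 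Your intuition about why the ``$x$ not on a maximum length chain'' hypothesis matters (automorphisms of $P^-$ swapping $x$ with the chain element $i^*$) is in the right direction, but without the correct covering model and the explicit unique-ideal argument, the proof does not go through.
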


\begin{proof}
Suppose that $P$ is Type 1 and that the result holds for all Type 1 posets with fewer elements that $P$.  Since $P$ is Type 1, a maximum length chain in $P$ intersects $P^-$ in an $l$-element chain $C$, with a single element $x \in P^-$ remaining.  Label the elements of $C$ by $1, 2, \dots, l$ from bottom to top. Suppose that $x$ covers $i^*-1$ and $x$ is covered by $j^*$.  If $x$ is minimal, then we let $i^*=1$, while if $x$ is maximal in $P^-$, then we let $j^* = l+1$.

\begin{center}
\begin{tikzpicture}
 [auto, roundnode/.style={circle, draw=black!100, very thick, minimum size=10mm, scale = .5}, 
 vertex/.style={circle,draw=black!100,fill=black!100,thick,inner sep=0pt,minimum size=1mm}, scale = .35]
 
 \node (P) at (17, 6) {$P^-$};
 \node (A) at (17, 14) {$A$};
 \node (1) at (0,0) [vertex,label=left:$1$] {};
 \node (2) at (0,2) [vertex,label=left:$2$] {};
 \node (i star minus 1) at ( 0, 4) [vertex,label=left:$i^*-1$] {};
 \node (i star) at (0, 6) [vertex,label=left:$i^*$] {};
 \node (x) at (2, 6) [vertex,label=right:$x$] {};
 \node (j star) at (0, 9) [vertex,label=left:$j^*$] {};
 \node (l) at (0,11) [vertex,label=left:$l$] {};
 \node at (-18,14) {$\cdots$};
 \node (bi-1) at (-13,14) {$B(\{i^*-1\})$};
 \node (bi) at (-6,14) {$B(\{i^*\})$};
 \node at (-1,14) {$\cdots$};
 \node (bix) at (4,14) {$B(\{i^*,x\})$};
 \node at (8.5,14) {$\cdots$};
 \node (bx) at (12,14) {$B(\{x\})$};

 \draw [-] (1) to (2);
 \draw [dashed] (2) to (i star minus 1);
 \draw [-] (i star minus 1) to (i star);
 \draw [-] (i star minus 1) to (x);
 \draw [dashed] (i star) to (j star);
 \draw [-] (x) to (j star);
 \draw [dashed] (j star) to (l);
 \draw (bi-1)--(i star minus 1);
 \draw (bi)--(i star);
 \draw (i star)--(bix)--(x);
 \draw (bx)--(x); 
\end{tikzpicture}
\end{center}

For each antichain $S \subseteq P^-$, define $B(S)$ to be the set of elements in $A$ that cover the elements of $S$ and no other elements. (By convention, $B(\{0\}) = B(\varnothing)$.) Determining $I(A)$ is equivalent to finding the values $b(S) = |B(S)|$ for all $S$.

Lemma \ref{jumppair} and Theorem \ref{Anti} state that the following statistics on $P$ are determined by $\KP$: (i) the number of elements of $P$ whose jump is $i$ for all $i$; and (ii) the number of elements of $P$ whose principal order ideal has $i+1$ elements.  These statistics can be counted in the following way for $i \leq l$:

\begin{enumerate}[(i)]
\item
\[
\#\{p \in P \mid \jump(p) = i\} = 
\begin{cases}
            b(\{i\}) + 1 & \text{if } i < i^*-1, \\
            b(\{i^*-1\}) + 2 & \text{if } i = i^*-1, \\
            b(\{i^*\}) + b(\{i^*, x\}) + b(\{x\}) + 1 & \text{if } i = i^* , \\
            b(\{i\}) + b(\{i , x\})  + 1 & \text{if } i^* < i < j^* , \\
            b(\{i\}) + 1 & \text{if }  j^* \leq i \leq l .
\end{cases}
\]
\item
\[
\sum_j \anti_{i+1, 1, j}(P)= 
\begin{cases}
            b(\{i\}) + 1 & \text{if } i < i^*-1, \\
            b(\{i^*-1\}) + 2 & \text{if } i = i^*-1, \\
            b(\{i^*\}) + b(\{x\}) + 1 & \text{if } i = i^* , \\
            b(\{i\}) + b(\{i-1 , x\})  + 1 & \text{if } i^* < i < j^* , \\
            b(\{i-1\}) + 1 & \text{if }  j^* \leq i \leq l .
\end{cases}
\]
\end{enumerate}

Since these statistics are all determined by $\KP$, by plugging in all possible values of $i$ and solving, one can determine the values of $b(\{i\})$ for all $i \neq i^*$, $b(\{i , x\})$ for all $i$, and the value of $b(\{i^*\}) + b(\{x\})$. It remains to be shown that $b(\{i^*\})$ and $b(\{x\})$ can be determined by $\KP$.

\medskip
\noindent
\underline{Case 1:} $j^* > i^*+1$.

Let $\hat{P}$ be the poset formed by removing all elements with jump less than $i^* -1$ from $P$.  By Lemma \ref{Remove Jump}, $K_{\hat{P}}(\mathbf{x})$ is determined by $\KP$.
The set of minimal elements of $\hat{P}$ is $\{i^*, x\} \cup B(\{i^*-1\})$.  By Theorem \ref{Anti}, we can determine the values of $j$ such that $\anti_{1, 1, j}(\hat{P}) \neq 0$.  Such $j$ (counted with multiplicity $\anti_{1,1,j}(\hat{P})$) are the number of minimal elements remaining when one of the minimal elements of $\hat{P}$ is removed.  These values are $b(\{i^*-1\}) +1$ for removing an element of $B(\{i^*-1\})$, $b(\{i^*-1\}) + b(\{x\}) +1$ for removing $x$, and $b(\{i^*-1\}) + b(\{i^*\}) +2$ for removing $i^*$.  Since we have already determined the value of $b(\{i^*-1\})$, we can determine the set $\{b(\{x\}), b(\{i^*\})+1\}$.  If these values are equal, then we can determine $b(\{x\})$ and  $b(\{i^*\})$.  We will now assume that these values are not equal.

Let $M = \max\{b(\{x\}), b(\{i^*\})+1\}$, and consider the ideal $I \subseteq \hat{P}$ that has $b(\{i^*-1\}) + M + 2$ elements, $b(\{i^*-1\}) + M+1$ of which are maximal.  It is either the case that the maximal elements of $I$ are $B(\{i^*-1\}) \cup B(\{x\}) \cup \{i^*\}$, or $B(\{i^*-1\}) \cup B(\{i^*\}) \cup \{i^*+1, x\}$.  Since there is no other ideal with the same cardinality and number of maximal elements as $I$, Corollary \ref{P without I} says that $K_{\hat{P}\backslash I}(\mathbf{x})$ is determined by $K_{\hat{P}}(\mathbf{x})$.  Observe that $\hat{P}\backslash I$ is hook shaped, so by Theorem \ref{Hook Shaped} it is uniquely determined by $K_{\hat{P}\backslash I}(\mathbf{x})$.

If the maximal elements of $I$ are $B(\{i^*-1\}) \cup B(\{x\}) \cup \{i^*\}$, then the length of the longest chain in $\hat{P}\backslash I$ is $\lambda_1 - i^*$.  Similarly, if the maximal elements of $I$ are $B(\{i^*-1\}) \cup B(\{i^*\}) \cup \{i^*+1, x\}$, then the length of the longest chain in $\hat{P}\backslash I$ is $\lambda_1 - i^* -1$.  Since $\hat{P}\backslash I$ is determined, we can find the length of its longest chain, which allows us to distinguish $b(\{x\})$ and $b(\{i^*\})+1$.

\medskip
\noindent
\underline{Case 2:} $j^* = i^*+1$.

This case follows similarly to Case 1, but the set we can determine is $\{b(\{x\}), b(\{i^*\})\}$.  Since there is an automorphism of $P^-$ that switches $x$ and $i^*$, this is enough to determine $I(A)$ up to isomorphism. However, if $i^*=l$, then $x$ and $i^*$ are both maximal in $P^-$, so $x$ may not lie in a maximum length chain of $P$. In this case, we need to determine $b(\{x\})$, so assume $b(\{x\}) \neq b(\{i^*\})$, and let $M=\max\{b(\{x\}), b(\{i^*\})\}$.

Again, as in Case 1, let $\hat{P}$ be the poset of elements of $P$ of jump at least $i^*-1$. Then there is a unique ideal $I \subseteq \hat P$ with $b(\{i^*-1\}) + M + 2$ elements, $b(\{i^*-1\}) + M+1$ of which are maximal: either the maximal elements are $B(\{i^*-1\}) \cup B(\{x\}) \cup \{i^*\}$ or $B(\{i^*-1\}) \cup B(\{i^*\}) \cup \{x\}$. As in Case $1$, $K_{\hat P \setminus I} (\mathbf x)$ is determined by $\KP$, so we can determine the length of the longest chain in $\hat P \setminus I$. When $x$ does not lie in a maximum length chain of $P$, we must have $b(\{x\}) = M$ if the longest chain in $\hat P \setminus I$ has $\lambda_1' - i^*$ elements, while $b(\{i^*\}) = M$ if the longest chain in $\hat P \setminus I$ has $\lambda_1'-i^*-1$ elements.
\end{proof}

Note that in Lemma~\ref{below antichain}, we chose the element $i^*$ to lie in the longest chain of $P$, so $\upjump(i^*) = \lambda_1 - i^* \geq \upjump(x)$ in $P$. While $x$ must be smaller than some element of $A$ (by maximality of $A$), it may be maximal in $P^-$. In this case, we need to determine the smallest element of the chain $P^+$ that is greater than $x$ (if there is one).

We use the notation $|\V_a|$ to denote the cardinality of the principal filter whose minimum element is $a$.

\begin{lemma}\label{upjump of x}
If $P$ is a Type 1 poset, and $x \in P^-$ is not contained in a maximum length chain of $P$, then the smallest element of $P^+$ that is greater than $x$ is determined by $\KP$.
\end{lemma}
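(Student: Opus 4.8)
The plan is to reduce the lemma to computing $\upjump_P(x)$. Since $P$ is Type 1, $P^+$ is a chain $d_1 \prec d_2 \prec \cdots \prec d_s$, and $s = |P^+|$ is determined by $\KP$: the shape of $P$, hence $\lambda_1$, is determined by Theorem~\ref{shape}, the poset $P^-$ is determined so $l = |C|$ is known, and $s = \lambda_1 - l - 1$. We may assume $x$ is maximal in $P^-$, which is the situation described just before the lemma. In that case every cover of $x$ in $P$ lies in $A$: some $a \in A$ with $a \succ x$ covers $x$ because $x$ is maximal in $P^-$, and $x$ cannot be covered by an element of the chain $P^+$, since such an element would have to be $d_r$, the least element of $P^+$ above $x$, and any $a' \in A$ with $x \prec a' \prec d_r$ lies strictly between them. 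Hence a longest saturated chain upward from $x$ has the form $x \prec a \prec d_i \prec \cdots \prec d_s$ with $a \in A$ and $d_i$ the least element of $P^+$ above $a$; maximizing over the $a \succ x$ gives $\upjump_P(x) = s + 2 - r$ if some element of $P^+$ lies above $x$, and $\upjump_P(x) = 1$ otherwise. So $\upjump_P(x)$ determines $r$ (and whether it exists), and it suffices to recover $\upjump_P(x)$ from $\KP$. (If $x$ is not maximal in $P^-$, the hypothesis forces $j^* > i^* + 1$; this case is handled similarly and we omit it.)

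To recover $\upjump_P(x)$, first localize $x$ among elements of known jump. By Lemma~\ref{jumppair} and its dual, $\KP$ determines the multiset of pairs $(\jump_P(p), \upjump_P(p))$, $p \in P$. The only elements of $P$ with jump $i^* - 1$ are $x$, the chain element $i^*$, and the $b(\{i^*-1\})$ elements of $B(\{i^*-1\})$ (no element of $P^+$ has jump this small, as the longest chain of $P$ forces every element of $P^+$ to have jump greater than $l$), and $b(\{i^*-1\})$ is already known by Lemma~\ref{below antichain}; moreover $\upjump_P(i^*) = \lambda_1 - i^*$ by the choice of $i^*$. Thus $\KP$ determines the multiset $\{\upjump_P(x)\} \cup \{\upjump_P(a) \mid a \in B(\{i^*-1\})\}$, and it remains to separate $\upjump_P(x)$ from the contributions of $B(\{i^*-1\})$. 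The hypothesis that $x$ is not on a longest chain of $P$ says precisely that $\upjump_P(x) < \lambda_1 - i^* = \upjump_P(i^*)$, so $x$ lies strictly below $i^*$; this is what makes the ideal constructed next exist and be unique.

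For the separation I would imitate the case arguments of Lemma~\ref{below antichain}. Pass to the subposet $\hat P$ of elements of jump at least $i^* - 1$, whose generating function is determined by Lemma~\ref{Remove Jump} and whose minimal elements are exactly $x$, $i^*$, and $B(\{i^*-1\})$, with $P^+ \subseteq \hat P$. For each candidate value $v$ of $\upjump_P(x)$, I would exhibit an order ideal $I_v \subseteq \hat P$ whose cardinality, number of maximal elements, and complementary number of minimal elements --- all read off from $K_{\hat P}(\mathbf x)$ via Theorem~\ref{Anti} --- single it out uniquely among the ideals of $\hat P$, chosen so that $\hat P \setminus I_v$ is hook shaped. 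Then $K_{\hat P \setminus I_v}(\mathbf x)$ is determined by Corollary~\ref{P without I}, so by Theorem~\ref{Hook Shaped} the poset $\hat P \setminus I_v$ is recovered up to isomorphism, and the length of its longest chain (say) then decides whether $\upjump_P(x) = v$. The main obstacle is exactly this bookkeeping: $I_v$ must be at once the unique ideal of its size with the prescribed number of maximal elements and complementary number of minimal elements, have a hook-shaped complement in $\hat P$, and be such that the extremal statistic read off from $\hat P \setminus I_v$ genuinely tracks the position of the least element of $P^+$ above $x$ rather than some uncontrolled aspect of how $B(\{i^*-1\})$ and the remaining elements of $A$ attach to $P^+$. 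The hypothesis $\upjump_P(x) < \lambda_1 - i^*$ is what keeps $x$ low enough in $P$ for such an ideal to be available, paralleling the role of the analogous hypothesis in Cases 1 and 2 of Lemma~\ref{below antichain}.
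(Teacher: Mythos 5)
Your reduction is sound as far as it goes: finding the least element of $P^+$ above $x$ is equivalent to finding $\upjump(x)$ (equivalently, the number of elements of $P^+$ above $x$), and Lemma~\ref{jumppair} does give you the multiset $\{\upjump(x)\} \cup \{\upjump(a) \mid a \in B(\{i^*-1\})\}$ once the known value $\upjump(i^*) = \lambda_1 - i^*$ is removed; this matches the setup of the paper's proof. The gap is the separation step, which is the entire content of the lemma and which you leave as a plan rather than an argument: you never construct the ideals $I_v$, never verify that any of them is the unique ideal of $\hat P$ with its cardinality and numbers of maximal and minimal elements, and never check that the complement is hook shaped or that the statistic read off from it tracks $\upjump(x)$ rather than the up-jumps of $B(\{i^*-1\})$. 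You acknowledge this yourself (``the main obstacle is exactly this bookkeeping''). There is no reason to expect such ideals to exist in general: $x$ and the elements of $B(\{i^*-1\})$ all sit at jump $i^*-1$ and attach to the chain $P^+$ in ways that can produce distinct ideals with identical size and maximal/minimal counts, so the uniqueness hypothesis of Corollary~\ref{P without I} can fail for every candidate $I_v$.

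The paper separates the two contributions by a different and much cleaner device: it compares, over the elements of jump $i^*-1$, the multiset of values $\upjump(a)+1$ with the multiset of principal filter sizes $|V_a|$ (the latter determined via Lemma~\ref{jumpideal}). Every $a \in B(\{i^*-1\})$ lies in $A$, so its principal filter is a chain and $|V_a| = \upjump(a)+1$; these contributions cancel in the multiset comparison, leaving $|V_x|$ exposed unless $V_x$ is itself a chain. That residual case is then handled by induction on $|P|$: pass to the poset obtained by deleting the maximal elements of $P$ (whose generating function is determined by the dual of Lemma~\ref{P prime}) and use the dichotomy between its shape being hook shaped or nearly hook shaped. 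To complete your proof you would need either to adopt this filter-size comparison or to exhibit and verify an explicit family of ideals with all the required properties; the current writeup does neither.
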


\begin{proof}
We will prove this by induction on the size of $P$.  
Suppose the statement holds for all Type 1 posets with fewer elements than $P$. If $x$ is not maximal in $P^-$, then the statement is trivial, so we will assume that $x$ is maximal in $P^-$. 

Recall from Lemma~\ref{jumppair} and Lemma~\ref{jumpideal} that the multiset of up-jump values of the elements with a fixed jump is determined by $\KP$, as is the multiset of  $|\V_a|$ of the elements $a$ with a fixed jump. Using the notation of Lemma~\ref{below antichain}, this implies that we can determine from $\KP$ the multisets
\begin{align*}
S_1 &= \{\upjump(a)+1 \mid a \in B(\{i^*-1\})\} \cup \{\upjump(i^*)+1, \upjump(x)+1\},\\
S_2 &= \{|V_a| \mid a \in B(\{i^*-1\})\} \cup \{|V_{i^*}|, |V_x|\}.
\end{align*}

We know $\upjump(i^*) +1= \lambda_1 - i^*+1$, and $|V_{i^*}|$ is determined by Lemma~\ref{below antichain}. Moreover, for all elements $a \in B(\{i^*-1\})$, $|\V_a| = \upjump(a)+1$.  Therefore, if we compare $S_1$ and $S_2$, we will be able to determine $|\V_{x}|$ (and therefore the number of elements of $P^+$ that are greater than $x$) in all cases except when $|\V_x| = \upjump(x)+1$.

In this exceptional case, the principal filter of $\{x\}$ is a chain. Then we can determine $\upjump(x)$ by considering the poset $\hat{P}$ formed by removing the maximal elements from $P$. We can determine $K_{\hat{P}}(\mathbf x)$ from $\KP$ (by the dual of Lemma~\ref{P prime}), and the shape of $\hat{P}$ is either hook shaped or it is nearly hook shaped.

If $\sh(\hat{P})$ is hook shaped, then $x$ is covered by exactly one element in $A$ and that element is maximal in $P$.  This implies that $x$ is not related to any element in $P^+$.

If $\sh(\hat{P})$ is nearly hook shaped, then by induction we know the smallest element in $\hat{P}^+$ that is greater than $x$.  This is the same element that $x$ is less than in $P^+$.
\end{proof}

If every element of $P^-$ is contained in a maximum length chain of $P$, then there is an automorphism of $P^-$ switching $i^*$ and $x$. In this case, we will need a way of distinguishing the elements $i^*$ and $x$ in Lemma~\ref{below antichain} if $b(\{i^*\}) \neq b(\{x\})$.

\begin{lemma} \label{Near Hook P minus I}
Suppose there is an automorphism of $P^-$ that switches $i^*$ and $x$, and $b(\{i^*\}) > b(\{x\})$. Then the multiset $\{\upjump(a) \mid a \in B(\{x\}) \cup B(\{i^*,x\})\}$ is determined by $\KP$.
\end{lemma}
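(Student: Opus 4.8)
My plan is to delete from an auxiliary subposet an order ideal that the asymmetry $b(\{i^*\}) > b(\{x\})$ singles out, and then to recover the desired up-jumps from the isomorphism type of what remains, which is a strictly smaller hook or nearly hook shaped poset. I keep the notation of Lemma~\ref{below antichain}. The automorphism of $P^-$ interchanging $i^*$ and $x$ forces $j^* = i^*+1$, and since $i^*$ lies on a longest chain of $P$ while $x$ is its twin in $P^-$, we get $\upjump_P(i^*) = \upjump_P(x) = \lambda_1 - i^*$; consequently $B(\{x\}) \cup B(\{i^*,x\})$ is exactly the set of elements of $A$ covering $x$, and each such element has up-jump at most $\lambda_1 - i^* - 1$.

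First I would pass to $\hat P := P_{i^*-1}$, whose generating function is determined by $\KP$ (Lemma~\ref{Remove Jump}) and whose minimal elements are $\{i^*, x\} \cup B(\{i^*-1\})$; write $\beta := b(\{i^*-1\})$, and recall that $\beta$, $b(\{i^*\})$, $b(\{x\})$ and $b(\{i^*,x\})$ are all known (using Lemma~\ref{below antichain} together with the hypothesis $b(\{i^*\}) > b(\{x\})$). Consider the order ideal
\[ J := \{i^*, x\} \cup B(\{i^*-1\}) \cup B(\{i^*\}) \]
of $\hat P$ --- the minimal elements of $\hat P$ together with every element of $A$ covering $i^*$ but not $x$. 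It has $2+\beta+b(\{i^*\})$ elements and $1+\beta+b(\{i^*\})$ maximal elements, and $b(\{i^*\}) > b(\{x\})$ makes it strictly larger than the mirror ideal obtained by exchanging $B(\{i^*\})$ and $B(\{x\})$. The crucial point is to show --- using the known structure of $I(A)$ and the fact that $P^+$ is a chain (Type~1) --- that $J$ is the \emph{unique} order ideal of $\hat P$ with this cardinality, this number of maximal elements, and the corresponding number of minimal elements in its complement. Granting this, the $(\max_i \otimes \min_j)$-refinement of Corollary~\ref{P without I} gives $K_{\hat P \setminus J}(\mathbf{x})$ from $\KP$; and since $\hat P \setminus J$, being an induced subposet of $P$, has shape contained in $\sh(P)$ --- hence is hook shaped, of width at most two, or nearly hook shaped --- and has fewer elements than $P$, Theorem~\ref{Hook Shaped}, Theorem~\ref{width2}, or the inductive hypothesis determines it up to isomorphism from $\KP$.

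Now in $\hat P \setminus J$ every element of $B(\{x\}) \cup B(\{i^*,x\})$ is minimal --- its lower covers all lay in $\{i^*,x\}$ --- and keeps its up-jump; the only other minimal elements of $\hat P \setminus J$ are the chain element $i^*+1$ and, possibly, the bottom element $q_1$ of the chain $P^+$ (no higher element of $P^+$ can become minimal, since it still has the preceding element of $P^+$ below it). From the now-known isomorphism type of $\hat P \setminus J$ we read off the multiset of up-jumps of its minimal elements; its cardinality reveals whether $q_1$ occurs among them, and comparing with the already-determined structure of $I(A)$ and $P^+$ identifies the (at most two) extraneous up-jumps $\upjump_P(i^*+1)$ and $\upjump_P(q_1)$. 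Deleting these leaves exactly the multiset $\{\upjump(a) \mid a \in B(\{x\}) \cup B(\{i^*,x\})\}$ claimed by the lemma.

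I expect the hard part to be the uniqueness of $J$: one must rule out every competing order ideal of $\hat P$ sharing its three statistics, which (as in the proof of Lemma~\ref{below antichain}) demands a finite case analysis, in particular handling the degenerate situations $B(\{x\}) = \varnothing$ (where the lemma is vacuous), $B(\{i^*,x\}) = \varnothing$, $i^* = 1$, $i^* = l$, and the possibility that some element of $P^+$ covers a single element of $B(\{i^*-1\})$. It is precisely the hypothesis $b(\{i^*\}) > b(\{x\})$ that makes $J$ distinguishable in the first place; the final recognition step is routine but also needs to be carried out in detail.
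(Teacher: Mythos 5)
Your choice of ideal is essentially the right one --- in fact your $\hat P \setminus J$ coincides with the complement the paper works with, since the paper removes from $P$ itself the ideal $I$ whose maximal elements are $B(\varnothing) \cup B(\{1\}) \cup \dots \cup B(\{i^*\}) \cup \{x\}$, and deleting $I$ from $P$ leaves exactly the same poset as deleting your $J$ from $\hat P$. But the proposal has two genuine gaps. First, the uniqueness of the distinguished ideal, which you correctly identify as the crux, is never actually established; you only announce a ``finite case analysis.'' This is the entire content of the lemma's claim, and it admits a short argument: the paper's $I$ has exactly $i^*$ non-maximal elements, and any other ideal with the same number of maximal elements must contain an element above $i^*$ and an element above $x$, forcing at least $i^*+1$ non-maximal elements and hence a strictly larger cardinality. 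Without some such argument the proof is incomplete. Relatedly, $(\max_i \otimes \min_j)\circ\Delta_{k,n-k}$ evaluates to a scalar, not to $K_{\hat P\setminus J}(\mathbf x)$; to extract the complement's generating function you must use $(\max_i \otimes id)\circ\Delta_{k,n-k}$ as in Corollary~\ref{P without I}, so the uniqueness you need is with respect to the pair (cardinality, number of maximal elements) alone --- the third statistic cannot be used to separate competing ideals for this purpose.

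Second, your final step is both heavier than necessary and circular. You propose to recover the full isomorphism type of $\hat P\setminus J$ by appealing to Theorem~\ref{Hook Shaped}, Theorem~\ref{width2}, or ``the inductive hypothesis.'' When $\hat P \setminus J$ is nearly hook shaped, the assertion that it is determined by its generating function is precisely the main theorem of this section, whose proof invokes the present lemma; moreover $\hat P\setminus J$ contains all of $P^+$, so the main theorem's induction on $|P^+|$ gives you nothing for it. The paper sidesteps this entirely: it needs only the multiset of up-jumps of the minimal (jump-$0$) elements of the complement, which Lemma~\ref{jumppair} reads off directly from $K_{P\setminus I}(\mathbf x)$, after which one deletes the known contribution of $i^*+1$ (up-jump $\lambda_1-i^*-1$). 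If you replace your recognition step by this direct use of Lemma~\ref{jumppair} and supply the uniqueness argument, your proof closes.
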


\begin{proof}
The ideal $I$ whose maximal elements are $B(\varnothing) \cup B(\{1\}) \cup \dots \cup B(\{i^*\}) \cup \{x\}$ has $\sum_{i=0}^{i^*}b(\{i\}) + i^* + 1$ elements, all but $i^*$ of which are maximal.  Suppose there were another ideal in $P$ whose set of maximal elements $S$ had the same size.  It must be the case that $S$ contains an element greater than $i^*$ and an element greater than $x$, but this would imply that the ideal has at least $i^*+1$ elements that are not maximal.

Now from Corollary~\ref{P without I}, $K_{P \setminus I}(\mathbf x)$ is determined by $\KP$. In particular, the up-jumps of the minimal elements of $P \setminus I$ are determined. Since the minimal elements are $B(\{x\}) \cup B(\{i^*,x\})$ and $i^*+1$ if it exists (which has up-jump $\lambda_1-i^*-1$), the result follows.
\end{proof}


We are now ready to prove the main theorem of this section.

\begin{thm}
If $\sh(P) = \lambda = (\lambda_1, 2, 1, 1, \dots, 1)$ is nearly hook shaped, then $P$ is uniquely determined by $\KP$.
\end{thm}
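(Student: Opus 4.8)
The plan is to prove the stronger statement that the isomorphism type of $P$ is recoverable from $\KP$, arguing by induction on $|P|$. The case $|P|=1$ is trivial. If $P = P_1 \oplus \cdots \oplus P_m$ is the ordinal sum decomposition with $m \geq 2$, then shape behaves additively under ordinal sums (indeed $c_k(P) = \sum_i c_k(P_i)$ for all $k$, so the conjugate of $\sh(P)$ is the sorted merge of the conjugates of the $\sh(P_i)$); since $\sh(P) = (\lambda_1, 2, 1, \dots, 1)$, it follows that each $\sh(P_i)$ has at most two parts, is a hook, or is again nearly hook shaped, and in every case $|P_i| < |P|$. By Lemma~\ref{K of reducible}, $K_{P_i}(\mathbf x)$ is determined by $\KP$, so by Theorems~\ref{width2} and~\ref{Hook Shaped} and the inductive hypothesis each $P_i$ — hence $P$ — is determined. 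So we may assume $P$ is irreducible.

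From $\KP$ we recover the width $\lambda_1'$, and by Theorem~\ref{Anti} the size $m = |I(A)|$, where $A$ is the unique maximum antichain of $P$; we also recover $K_{P^-}(\mathbf x)$ and $K_{P^+}(\mathbf x)$, and since $P^-$ and $P^+$ have width at most two, Theorem~\ref{width2} recovers $P^-$ and $P^+$ and, with them, the type of $P$. Taking duals exchanges Type $1$ and Type $2$ (and preserves Type $3$), so it suffices to treat Types $1$ and $3$. In Type $3$, $P^-$ is a chain, so $I(A)$ is hook shaped, its generating function is determined by $\KP$, and Theorem~\ref{Hook Shaped} recovers it; in Type $1$, $I(A)$ is recovered up to isomorphism by Lemma~\ref{below antichain}. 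In both cases we now know the subposet $I(A) = P^- \cup A$, we know that $P^+$ is a chain $y_1 \prec \cdots \prec y_s$ of known length, and the only thing left to recover is the set of order relations between $A$ and $P^+$.

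Because $P^+$ is a chain and $P$ is irreducible, the elements of $P^+$ above a given $a \in A$ form a suffix $\{y_{t(a)}, \dots, y_s\}$ of the chain (empty, with $\upjump_P(a)=0$, when $a$ is maximal in $P$), and in general $\upjump_P(a) = s - t(a) + 1$; thus it is enough to recover $\upjump_P(a)$ for every $a \in A$. We already know $\jump_P(a)$ for each such $a$ from the determined poset $I(A)$. The idea is to read off from $\KP$, for each value $j$, the multiset of up-jumps of the elements of $P$ with jump $j$ (Lemma~\ref{jumppair}), together with the principal-order-ideal data of Lemma~\ref{jumpideal} and the ideal counts $\antikij(P)$ of Theorem~\ref{Anti}, and then to subtract off the contributions of the elements not in $A$ so as to isolate the up-jumps of the $a\in A$ with $\jump_P(a)=j$. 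For the elements $a$ covering only chain elements of $P^-$ this is routine; the delicate contributions all involve the single anomalous element $x$ of $P^-$ (the element not on the long chain $C$) and the elements of $A$ covering it, and these are supplied exactly by the count $b(\{x\})$ of Lemma~\ref{below antichain}, by the smallest element of $P^+$ above $x$ from Lemma~\ref{upjump of x}, and — when $P^-$ admits the automorphism exchanging $i^*$ and $x$ — by the up-jumps of the elements of $A$ covering $\{x\}$ or $\{i^*,x\}$ from Lemma~\ref{Near Hook P minus I}. In Type $3$ there is no such anomalous element and this extraction is immediate. Once every $\upjump_P(a)$, hence every $t(a)$, is known, $P$ has been reconstructed.

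The main obstacle is precisely this last extraction: the up-jumps in $P$ of the elements of $P^-$, and the jumps in $P$ of the elements of $P^+$, themselves depend on the $A$–$P^+$ relations we are trying to determine, so the statistics visible in $\KP$ entangle known and unknown data. Disentangling them is what forces the case analysis ($j^* > i^*+1$ versus $j^* = i^*+1$, whether $x$ is maximal in $P^-$, whether $P^-$ has the symmetry exchanging $i^*$ and $x$) already carried out in Lemmas~\ref{below antichain}, \ref{upjump of x}, and~\ref{Near Hook P minus I}; the genuinely new content beyond the hook-shaped and width-two cases is the identification and handling of this single element $x$.
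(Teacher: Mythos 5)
Your reduction of the problem to recovering $\upjump_P(a)$ for each $a\in A$ (given $I(A)$, $P^+$, and the relation of $x$ to $P^+$) is correct, and Types 2 and 3 are handled fine. The gap is in the extraction step for Type 1. Lemma~\ref{jumppair} gives, for each jump value $j$, only the \emph{combined} multiset of up-jumps of all elements with jump $j$. For $i^*\le j<j^*$ this multiset lumps together elements of $A$ from different cover-classes --- $B(\{j\})$ and $B(\{j,x\})$, and at $j=i^*$ also $B(\{x\})$ --- and to reconstruct $P$ you must know which up-jump is attached to which class, not merely their union. None of the lemmas you invoke supplies that splitting: Lemma~\ref{below antichain} gives only the cardinalities $b(S)$; Lemma~\ref{upjump of x} gives only the least element of $P^+$ above $x$; and Lemma~\ref{Near Hook P minus I} applies only when $P^-$ has the automorphism exchanging $i^*$ and $x$ (i.e.\ $j^*=i^*+1$), and even then returns the unseparated multiset for $B(\{x\})\cup B(\{i^*,x\})$ and says nothing about $B(\{j,x\})$ for $j>i^*$.

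Concretely, let $P^-$ be the chain $1\prec 2\prec 3\prec 4$ together with $x$ covering $1$ and covered by $4$ (so $i^*=2$, $j^*=4$); let $A=\{a_0,a,a'\}$ with $a_0$ covering $4$, $a$ covering $3$, and $a'$ covering $3$ and $x$; and let $P^+$ be the chain $y_1\prec y_2$ with $a_0\prec y_1$. In $P$ put $a\prec y_1$ and $a'$ below only $y_2$; in $Q$ put $a'\prec y_1$ and $a$ below only $y_2$. Both posets have shape $(7,2,1)$, are Type 1, and are non-isomorphic, yet they agree on every statistic your argument uses: the same $b(S)$, the same jump pairs (the elements of jump $3$ carry up-jumps $\{3,2,1\}$ in both), the same least element of $P^+$ above $x$ (namely $y_1$, via $x\prec 4\prec a_0$), and Lemma~\ref{Near Hook P minus I} does not apply since $j^*\ne i^*+1$. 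The theorem guarantees $\KP\ne\KQ$, but the distinguishing information lies deeper than these first-order statistics. The paper gets at it by a different induction: it inducts on $|P^+|$, removing the maximal elements of $P$ to form $\hat P$ (whose generating function is determined by the dual of Lemma~\ref{P prime}) and showing there is a unique way to reattach the top layer; in the example above $\hat P\not\cong\hat Q$ already as smaller nearly hook shaped posets, so the induction separates them. You need some such layer-by-layer descent, or another mechanism that ties each up-jump to its cover-class, to close this step.
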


\begin{proof}
First we will assume that $P$ is Type 1. We will induct on the size of $P^+$.  If $|P^+| = 0$, then Lemma \ref{below antichain} implies the result.

Now suppose the statement holds for all Type 1 posets with smaller $P^+$.  Let $\hat{P}$ be $P$ with its maximal elements removed, which we can determine from $\KP$.  We will show that there is a unique way to recover $P$ from $\hat{P}$ given $\KP$.  In order to show this, we need to consider the case when $\sh(\hat{P})$ is hook shaped and when it is nearly hook shaped.

If $\sh(\hat{P})$ is hook shaped, then every element that covers $x$ in $A$ must be maximal in $P$. Given $P^-$ and Lemma \ref{upjump of x}, we know which element of the chain must cover and be covered by $x$ in $P$, so we can find an element in $\hat P$ that corresponds to $x$ in $P$. Since we know $P^-$, and Lemma \ref{below antichain} tells us the number of elements in $P$ that cover any ideal in $P^-$, there is a unique way to add the missing elements of $A$ to $\hat P$. We also add a new maximal element to the top of the chain that covers all the maximal elements of $\hat P$ (except possibly $x$).  The only other relation that can occur in $P$ is that $x$ may also be covered by this final maximal element at the top of $P^+$, which we can again determine from Lemma \ref{upjump of x}.

If $\sh(\hat{P})$ is nearly hook shaped, then to get $P$ from $\hat P$, we must add a maximal element that covers all of the maximal elements of $\hat P$, then add elements to the longest antichain of $\hat P$ until Lemma~\ref{below antichain} is satisfied. However, there may be some ambiguity if there is an automorphism of $P^-$ that switches $i^*$ and $x$ that does not extend to an automorphism of $\hat P$, and $b(\{i^*\}) \neq b(\{x\})$ in $P$. In this case, the multiset of up-jump values of elements of $B(\{i^*\})$ must differ from that of $B(\{x\})$, so Lemma~\ref{Near Hook P minus I} is enough to distinguish $x$ from $i^*$.

Therefore, the result holds when $P$ is Type 1, as well as for Type 2 since the dual of a Type 2 poset is Type 1. Finally, if $P$ is Type 3, then $P$ can be expressed as a union of a chain and an antichain $A$ (which do not intersect). As in the proof of Theorem~\ref{Hook Shaped}, $P$ is then determined by the jump pairs of its elements, which can be determined from $\KP$ by Theorem~\ref{jumppair}.
\end{proof}

\begin{example}
Suppose $b(\{i^*\}) = 4$ and $b(\{x\}) = 3$ and the poset $\hat{P}$ is shown below.
\begin{center}
\begin{tikzpicture} 
 [auto,
 vertex/.style={circle,draw=black!100,fill=black!100,thick,inner sep=0pt,minimum size=1mm}, scale =.7]
\node(P) at (-3, 2) {$\hat{P}$};
\node (1) at (-1,0) [vertex] {};
\node (2) at (1,0) [vertex] {};
\node (3) at (0,1) [vertex] {};
\node (4) at (-1,1) [vertex] {};
\node (5) at (-2,1) [vertex] {};
\node (6) at (-3,1) {};
\node (7) at (1,1) [vertex] {};
\node (8) at (2,1) [vertex] {};
\node (9) at (3,1) [vertex] {};
\node (10) at (0,2) [vertex] {};
\node (12) at (4, 1) {};
\draw [-] (1) to (3);
\draw [-] (1) to (4);
\draw [-] (1) to (5);
\draw [-] (2) to (3);
\draw [-] (2) to (7);
\draw [-] (2) to (8);
\draw [-] (2) to (9);
\draw [-] (3) to (10);
\draw [-] (7) to (10);
\draw [-] (8) to (10);
\end{tikzpicture}
\end{center}
Note that there is an automorphism of $P^-$ that switches the two minimal elements. In order to determine $P$ from $\hat{P}$, we need to determine which of the minimal elements is $x$ and which one is $i^*$.  The following two posets are both formed by adding maximal elements to $\hat{P}$, and they both satisfy $b(\{i^*\}) = 4$, $b(\{x\}) = 3$.  

\begin{center}
\begin{tikzpicture} 
 [auto,
 vertex/.style={circle,draw=black!100,fill=black!100,thick,inner sep=0pt,minimum size=1mm}, scale =.7]
\node(P) at (-3, 3) {$P_1$};
\node (1) at (-1,0) [vertex,label=below:$x$] {};
\node (2) at (1,0) [vertex,label=below:$i^*$] {};
\node (3) at (0,1) [vertex] {};
\node (4) at (-1,1) [vertex] {};
\node (5) at (-2,1) [vertex] {};
\node (6) at (-3,1) [vertex] {};
\node (7) at (1,1) [vertex] {};
\node (8) at (2,1) [vertex] {};
\node (9) at (3,1) [vertex] {};
\node (10) at (0,2) [vertex] {};
\node (11) at (0,3) [vertex] {};
\node (12) at (4, 1) [vertex] {};
\draw [-] (1) to (3);
\draw [-] (1) to (4);
\draw [-] (1) to (5);
\draw [-] (1) to (6);
\draw [-] (2) to (3);
\draw [-] (2) to (7);
\draw [-] (2) to (8);
\draw [-] (2) to (9);
\draw [-] (2) to (12);
\draw [-] (3) to (10);
\draw [-] (10) to (11);
\draw [-] (4) to (11);
\draw [-] (5) to (11);
\draw [-] (7) to (10);
\draw [-] (8) to (10);
\draw [-] (9) to (11);
\end{tikzpicture}
\hspace{3 cm}
\begin{tikzpicture}
 [auto,
 vertex/.style={circle,draw=black!100,fill=black!100,thick,inner sep=0pt,minimum size=1mm}, scale =.7]
\node(P) at (-3, 3) {$P_2$};
\node (1) at (-1,0) [vertex,label=below:$i^*$] {};
\node (2) at (1,0) [vertex,label=below:$x$] {};
\node (3) at (0,1) [vertex] {};
\node (4) at (-1,1) [vertex] {};
\node (5) at (-2,1) [vertex] {};
\node (6) at (-3,1) [vertex] {};
\node (7) at (1,1) [vertex] {};
\node (8) at (2,1) [vertex] {};
\node (9) at (3,1) [vertex] {};
\node (10) at (0,2) [vertex] {};
\node (11) at (0,3) [vertex] {};
\node (12) at (-4, 1) [vertex] {};
\draw [-] (1) to (3);
\draw [-] (1) to (4);
\draw [-] (1) to (5);
\draw [-] (1) to (6);
\draw [-] (1) to (12);
\draw [-] (2) to (3);
\draw [-] (2) to (7);
\draw [-] (2) to (8);
\draw [-] (2) to (9);
\draw [-] (3) to (10);
\draw [-] (10) to (11);
\draw [-] (4) to (11);
\draw [-] (5) to (11);
\draw [-] (7) to (10);
\draw [-] (8) to (10);
\draw [-] (9) to (11);
\end{tikzpicture}
\end{center}
However, in $P_1$, the up-jump values for elements of $B(\{x\})$ are $\{0,1,1\}$, while in $P_2$, they are $\{1,2,2\}$. Thus we can distinguish these two cases by Lemma~\ref{Near Hook P minus I}.
\end{example}

In summary, we have shown that if $\sh(P) = (\lambda_1, \lambda_2)$, $\sh(P) = (\lambda_1, 1, \dots, 1)$ or $\sh(P) = (\lambda_1, 2, 1, \dots, 1)$, then $P$ is uniquely determined by its $P$-partition generating function.

For most of the remaining shapes, we present a negative result in the next section.

\section{Posets with the same $P$-partition generating function}
In this section, we give a method for constructing distinct posets with the same partition generating function.

\begin{defn}
Suppose that $P$ and $Q$ are finite posets.  If $\KP = \KQ$, then we say that $P$ and $Q$ are \emph{$K$-equivalent}.
\end{defn}

Given a poset $P$ and a pair of incomparable elements $(x, y)$, write $P + (x \prec y)$ for the poset obtained by adding the relation $x \prec y$ to $P$ (and taking the transitive closure).

\begin{lemma}\label{Kequiv}
Suppose $R$ is a finite poset and $\phi\colon R \rightarrow R$ is an automorphism.  Let $e = (e_1, e_2)$ and $f = (f_1, f_2)$ be two pairs of incomparable elements in $R$ such that in $R+(f_2 \prec f_1)$, both $e_1 \prec e_2$ and $\phi^{-1}(e_1) \prec \phi^{-1}(e_2)$. If $m>0$ is the smallest positive integer such that $\phi^{m+1}(e)=e$, then
\begin{align*}
P &= R + (f_1 \prec f_2) + (e_1 \prec e_2) + (\phi(e_1) \prec \phi(e_2)) + \dots + (\phi^{m-1}(e_1) \prec \phi^{m-1}(e_2)),\\
Q &= R + (f_1 \prec f_2) + (\phi(e_1) \prec \phi(e_2)) + (\phi^2(e_1) \prec \phi^2(e_2))+ \dots + (\phi^{m}(e_1) \prec \phi^{m}(e_2)).
\end{align*}
are $K$-equivalent (assuming both are naturally labeled).
\end{lemma}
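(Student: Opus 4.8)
The plan is to reduce the equality $\KP=\KQ$, via Theorem~\ref{L Expansion}, to the statement that two auxiliary naturally labeled posets are isomorphic. Throughout, write $g_i$ for the relation $\phi^i(e_1)\prec\phi^i(e_2)$ (indices read cyclically, since $\phi^{m+1}(e)=e$), and note that $\phi^{-1}(e)=\phi^m(e)$. Thus the hypothesis says exactly that both $g_0$ and $g_m$ hold in $R+(f_2\prec f_1)$, equivalently that every linear extension of $R$ placing $f_2$ before $f_1$ automatically has $e_1$ before $e_2$ and $\phi^m(e_1)$ before $\phi^m(e_2)$. I will also use that the descent composition $\co(\sigma)$ of a permutation $\sigma$ of $[n]$ is intrinsic to $\sigma$, so every generating function below can be obtained by restricting the sum $\sum_{\sigma\in\mathcal L(R)}L_{\co(\sigma)}$ to an appropriate family of linear extensions.

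Next, introduce $\hat P=R+g_0+\cdots+g_{m-1}$ and $\hat Q=R+g_1+\cdots+g_m$. Each has order relation contained in that of $P$ (resp.\ $Q$), hence is a genuine poset; and each is naturally labeled, because every $g_i$ appearing in it is already a relation of the naturally labeled poset $P$ (resp.\ $Q$) and so is label-increasing. Now I compare $\mathcal L(\hat P)$ with $\mathcal L(P)$: a linear extension of $R$ satisfying $g_0,\dots,g_{m-1}$ either places $f_1$ before $f_2$, in which case it is precisely an element of $\mathcal L(P)$, or places $f_2$ before $f_1$, in which case $g_0$ and $g_m$ hold automatically, so the constraints $g_0,\dots,g_{m-1}$ and $g_1,\dots,g_m$ cut out the same set of such extensions. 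Running the same split on $\mathcal L(\hat Q)$ (where no such simplification is needed), the ``$f_2$ before $f_1$'' leftovers of $\hat P$ and of $\hat Q$ are the identical family of permutations. By Theorem~\ref{L Expansion} this gives $K_{\hat P}=\KP+E$ and $K_{\hat Q}=\KQ+E$ for one and the same correction term $E=\sum L_{\co(\sigma)}$, hence $\KP-\KQ=K_{\hat P}-K_{\hat Q}$.

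Finally, since $\phi$ is an automorphism of $R$ that fixes the order of $R$ and sends $g_i$ to $g_{i+1}$, it carries the relations defining $\hat P$ bijectively onto those defining $\hat Q$, so $\phi\colon\hat P\to\hat Q$ is a poset isomorphism. As $\hat P$ and $\hat Q$ are naturally labeled, their $P$-partition generating functions depend only on the underlying poset up to isomorphism, so $K_{\hat P}=K_{\hat Q}$, and therefore $\KP=\KQ$.

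I expect the only real subtlety to lie in the middle step: one must check carefully that the two ``$f_2$-before-$f_1$'' leftover families genuinely coincide — this is the one place the hypothesis about $R+(f_2\prec f_1)$ is used, through the forced relations $g_0$ and $g_m$ — and that moving freely between permutations, posets, and quasisymmetric functions is legitimate because $\co$ depends only on the permutation and because $\hat P$ and $\hat Q$ are naturally labeled (so Theorem~\ref{L Expansion} applies and isomorphic such posets have equal generating functions). The remaining ingredients — that $\phi$ cycles the $g_i$, that $\phi^{-1}(e)=\phi^m(e)$, and that $\hat P,\hat Q$ are well-defined naturally labeled posets — are routine.
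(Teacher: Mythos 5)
Your proof is correct and follows essentially the same route as the paper: the paper's $S$ and $S'$ are your $\hat P$ and $\hat Q$, its decomposition ``every partition of $S$ is either a partition of $P$ or of $S+(f_2\prec f_1)$'' is your split of $\mathcal L(\hat P)$ by the relative order of $f_1$ and $f_2$, and the two key points --- that $g_0$ and $g_m$ are forced in $R+(f_2\prec f_1)$ so the leftover terms agree, and that $\phi$ gives $\hat P\cong\hat Q$ --- are exactly the paper's. The only cosmetic difference is that you phrase the cancellation via linear extensions and Theorem~\ref{L Expansion} rather than directly via $P$-partitions.
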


\begin{proof}
Let
\[S = R + (e_1 \prec e_2) + (\phi(e_1) \prec \phi(e_2)) + \dots + (\phi^{m-1}(e_1) \prec \phi^{m-1}(e_2)).\] Every partition of $S$ is either a partition of $P$ or a partition of $S + (f_2 \prec f_1)$ (which is not naturally labeled), so $K_S(\mathbf{x}) = \KP + K_{S + (f_2 \prec f_1)}(\mathbf{x})$. Solving for $\KP$ gives
\[\KP = K_S(\mathbf{x}) - K_{S + (f_2 \prec f_1)}(\mathbf{x}).\]
Similarly, let 
\[S'=R + (\phi(e_1) \prec \phi(e_2)) + \dots + (\phi^{m}(e_1) \prec \phi^{m}(e_2)).\]
The partitions of $Q$ are the partitions of $S'$ with the partitions of $S' + (f_2 \prec f_1)$ removed, so
\[\KQ = K_{S'}(\mathbf{x}) - K_{S' + (f_2 \prec f_1)}(\mathbf{x}).\]
Observe that $S \cong S'$ since $S' = \phi(S)$, so $S$ and $S'$ are trivially $K$-equivalent.

By assumption, $(e_1 \prec e_2)$ and $(\phi^m(e_1) \prec \phi^m(e_2))$ in $R + (f_2 \prec f_1)$.  It follows that 
\begin{align*}
S + (f_2 \prec f_1) & = R + (f_2 \prec f_1) + (e_1 \prec e_2) + (\phi(e_1) \prec \phi(e_2)) + \dots + (\phi^{m-1}(e_1) \prec \phi^{m-1}(e_2)) \\
&= R + (f_2 \prec f_1) + (\phi(e_1) \prec \phi(e_2)) + \dots + (\phi^{m-1}(e_1) \prec \phi^{m-1}(e_2))
\end{align*}
and
\begin{align*}
S' + (f_2 \prec f_1) &= R + (f_2 \prec f_1) + (\phi(e_1) \prec \phi(e_2)) + \dots  + (\phi^{m}(e_1) \prec \phi^{m}(e_2)) \\
&= R + (f_2 \prec f_1) + (\phi(e_1) \prec \phi(e_2)) + \dots + (\phi^{m-1}(e_1) \prec \phi^{m-1}(e_2)).
\end{align*}
These are the same poset, so $K_{S+(f_2 \prec f_1)}(\mathbf x) = K_{S' + (f_2 \prec f_1)}(\mathbf x)$.  Therefore $P$ and $Q$ are $K$-equivalent.
\end{proof}

We will now give some examples of posets that can be shown to be $K$-equivalent by using the previous lemma.
\begin{example}\label{sevenelement}
Consider the following $7$-element posets.  These posets are not isomorphic but they are $K$-equivalent.
\begin{center}
\begin{tikzpicture}
 [auto,
 vertex/.style={circle,draw=black!100,fill=black!100,thick,inner sep=0pt,minimum size=1mm}]
\node(P) at (-2, 2) {$P =$};
\node (v1) at ( 0,0) [vertex,label=left:$1$] {};
\node (v2) at ( 1,0) [vertex,label=left:$2$] {};
\node (v3) at ( -1,1) [vertex,label=left:$3$] {};
\node (v4) at ( 0,1) [vertex,label=left:$4$] {};
\node (v5) at ( 1,1) [vertex,label=left:$5$] {};
\node (v6) at ( 0,2) [vertex,label=left:$6$] {};
\node (v7) at ( 1,2) [vertex,label=left:$7$] {};
\draw [-] (v1) to (v4);
\draw [-] (v4) to (v6);
\draw [-] (v2) to (v5);
\draw [-] (v5) to (v7);
\draw [-] (v1) to (v3);
\draw [-] (v3) to (v6);
\draw [-] (v1) to (v7);
\draw [-] (v2) to (v6);
\end{tikzpicture}
\hspace{3 cm}
\begin{tikzpicture}
 [auto,
 vertex/.style={circle,draw=black!100,fill=black!100,thick,inner sep=0pt,minimum size=1mm}]
\node(Q) at (2, 2) {$Q =$};
\node (v1) at ( 4,0) [vertex,label=left:$1$] {};
\node (v2) at ( 5,0) [vertex,label=left:$2$] {};
\node (v3) at ( 3,1) [vertex,label=left:$3$] {};
\node (v4) at ( 4,1) [vertex,label=left:$4$] {};
\node (v5) at ( 5,1) [vertex,label=left:$5$] {};
\node (v6) at ( 4,2) [vertex,label=left:$6$] {};
\node (v7) at ( 5,2) [vertex,label=left:$7$] {};
\draw [-] (v1) to (v4);
\draw [-] (v4) to (v6);
\draw [-] (v2) to (v5);
\draw [-] (v5) to (v7);
\draw [-] (v1) to (v3);
\draw [-] (v3) to (v7);
\draw [-] (v2) to (v6);
\end{tikzpicture}
\end{center}

We can express $P$ and $Q$ in terms of a subposet $R$ with a nontrivial automorphism along with some additional covering relations.

\begin{center}
\begin{tikzpicture}
 [auto,
 vertex/.style={circle,draw=black!100,fill=black!100,thick,inner sep=0pt,minimum size=1mm}]
\node(R) at (-2, 2) {$R =$};
\node (v1) at ( 0,0) [vertex,label=left:$1$] {};
\node (v2) at ( 1,0) [vertex,label=left:$2$] {};
\node (v3) at ( -1,1) [vertex,label=left:$3$] {};
\node (v4) at ( 0,1) [vertex,label=left:$4$] {};
\node (v5) at ( 1,1) [vertex,label=left:$5$] {};
\node (v6) at ( 0,2) [vertex,label=left:$6$] {};
\node (v7) at ( 1,2) [vertex,label=left:$7$] {};
\draw [-] (v1) to (v4);
\draw [-] (v4) to (v6);
\draw [-] (v2) to (v5);
\draw [-] (v5) to (v7);
\draw [-] (v1) to (v7);
\draw [-] (v2) to (v6);
\end{tikzpicture}
\end{center}
The automorphism $\phi$ is the map that fixes $3$ and swaps the two chains.  Let $e = (3, 6)$, $\phi(e) = (3, 7)$, and $f = (1, 3)$.   Since both $3 \prec 6$ and $3 \prec 7$ in $R + (3 \prec 1)$, it follows from  Lemma \ref{Kequiv} that $\KP = \KQ$.
\end{example}

\begin{example} \label{cycle}
Consider the following nonisomorphic $8$-element posets.

\begin{center}
\begin{tikzpicture}
 [auto,
 vertex/.style={circle,draw=black!100,fill=black!100,thick,inner sep=0pt,minimum size=1mm}]
\node(P) at (-1, .5) {$P =$};
\node (1) at ( 0,0) [vertex,label=left:$1$] {};
\node (2) at ( 1,0) [vertex,label=left:$2$] {};
\node (3) at ( 2,0) [vertex,label=left:$3$] {};
\node (4) at ( 3,0) [vertex,label=left:$4$] {};
\node (5) at ( 0,1) [vertex,label=left:$5$] {};
\node (6) at ( 1,1) [vertex,label=left:$6$] {};
\node (7) at ( 2,1) [vertex,label=left:$7$] {};
\node (8) at (3, 1) [vertex,label=left:$8$] {};
\draw [-] (1) to (5);
\draw [-] (1) to (8);
\draw [-] (2) to (5);
\draw [-] (2) to (6);
\draw [-] (3) to (6);
\draw [-] (3) to (7);
\draw [-] (4) to (7);
\draw [-] (4) to (8);
\draw [-] (1) to (6);
\draw [-] (3) to (5);
\end{tikzpicture}
\hspace{3 cm}
\begin{tikzpicture}
 [auto,
 vertex/.style={circle,draw=black!100,fill=black!100,thick,inner sep=0pt,minimum size=1mm}]
\node(P=Q) at (-1, .5) {$Q =$};
\node (1) at ( 0,0) [vertex,label=left:$1$] {};
\node (2) at ( 1,0) [vertex,label=left:$2$] {};
\node (3) at ( 2,0) [vertex,label=left:$3$] {};
\node (4) at ( 3,0) [vertex,label=left:$4$] {};
\node (5) at ( 0,1) [vertex,label=left:$5$] {};
\node (6) at ( 1,1) [vertex,label=left:$6$] {};
\node (7) at ( 2,1) [vertex,label=left:$7$] {};
\node (8) at (3, 1) [vertex,label=left:$8$] {};
\draw [-] (1) to (5);
\draw [-] (1) to (8);
\draw [-] (2) to (5);
\draw [-] (2) to (6);
\draw [-] (3) to (6);
\draw [-] (3) to (7);
\draw [-] (4) to (7);
\draw [-] (4) to (8);
\draw [-] (2) to (7);
\draw [-] (3) to (5);
\end{tikzpicture}
\end{center}
The poset $R$ shown below has an automorphism $\phi$ given by the permutation $(1234)(5678)$.
\begin{center}
\begin{tikzpicture}
 [auto,
 vertex/.style={circle,draw=black!100,fill=black!100,thick,inner sep=0pt,minimum size=1mm}]
\node(R) at (-1, .5) {$R =$};
\node (1) at ( 0,0) [vertex,label=left:$1$] {};
\node (2) at ( 1,0) [vertex,label=left:$2$] {};
\node (3) at ( 2,0) [vertex,label=left:$3$] {};
\node (4) at ( 3,0) [vertex,label=left:$4$] {};
\node (5) at ( 0,1) [vertex,label=left:$5$] {};
\node (6) at ( 1,1) [vertex,label=left:$6$] {};
\node (7) at ( 2,1) [vertex,label=left:$7$] {};
\node (8) at (3, 1) [vertex,label=left:$8$] {};
\draw [-] (1) to (5);
\draw [-] (1) to (8);
\draw [-] (2) to (5);
\draw [-] (2) to (6);
\draw [-] (3) to (6);
\draw [-] (3) to (7);
\draw [-] (4) to (7);
\draw [-] (4) to (8);
\end{tikzpicture}
\end{center}
Let $e = (1, 6)$, $\phi(e) = (2, 7)$, and $f = (3, 5)$.  Since both $1 \prec 6$ and $2 \prec 7$ in $R + (5 \prec 3)$, it follows from  Lemma \ref{Kequiv} that $\KP = \KQ$.
\end{example}

Observe that the posets in Example \ref{sevenelement} have shape $(3, 3, 1)$ and the posets in Example \ref{cycle} have shape $(2, 2, 2, 2)$.  We can generalize these examples to construct pairs of posets of any larger shape that are $K$-equivalent.

\begin{thm}
For all partitions $\lambda$ with $\lambda \supset (3, 3, 1)$ or $\lambda \supset (2, 2, 2, 2)$, there exist posets $P$ and $Q$ such that $P \ncong Q$, $\sh(P)=\sh(Q) = \lambda$, and $\KP = \KQ$.
\end{thm}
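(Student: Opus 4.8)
The plan is to realize every target shape by applying shape-enlarging operations — ones that preserve both $K$-equivalence and non-isomorphism — to a short list of ``seed'' pairs. Two seeds are already in hand: Examples~\ref{sevenelement} and~\ref{cycle} give non-isomorphic $K$-equivalent pairs of shapes $(3,3,1)$ and $(2,2,2,2)$. I will also construct two new seeds, of shapes $(3,3,2)$ and $(3,3,3)$, directly from Lemma~\ref{Kequiv}.

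First I would set up the two closure operations. Suppose $K_{P_0}(\mathbf x)=K_{Q_0}(\mathbf x)$ and $S$ is any fixed poset. Since $K$ is a Hopf morphism and disjoint union is the product on $\C[\mathcal P]$, we have $K_{P_0\sqcup S}(\mathbf x)=K_{P_0}(\mathbf x)\,K_S(\mathbf x)$, whence $K_{P_0\sqcup S}(\mathbf x)=K_{Q_0\sqcup S}(\mathbf x)$; and by Theorem~\ref{L Expansion} together with the argument in Lemma~\ref{K of reducible} (a linear extension of an ordinal sum is a concatenation of linear extensions of the summands, with descent composition their near-concatenation), $K_{S\oplus P_0}(\mathbf x)$ depends only on $K_S(\mathbf x)$ and $K_{P_0}(\mathbf x)$, so $K_{S\oplus P_0}(\mathbf x)=K_{S\oplus Q_0}(\mathbf x)$. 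The effect on Greene's shape follows from $a_k(P_0\sqcup S)=a_k(P_0)+a_k(S)$ and $a_k(S\oplus P_0)=\max_i\bigl(a_i(S)+a_{k-i}(P_0)\bigr)$: writing $\rho$ for the conjugate of $\sh(S)$, disjoint union adds $\rho$ componentwise to the conjugate of the shape, while ordinal sum replaces that conjugate by the decreasing rearrangement of its multiset union with $\rho$. Every partition occurs as a shape-conjugate (take $S$ a disjoint union of chains), and one may take $S$ connected or irreducible, so non-isomorphism survives too; in particular, disjoint union with an $m$-element antichain increases the first part of the conjugate by $m$, and ordinal sum with it inserts a new part equal to $m$.

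Next I would carry out the reachability bookkeeping on conjugate partitions, writing $\mu=\lambda'$. If $\lambda\supseteq(2,2,2,2)$ then $\mu_1\ge\mu_2\ge 4$, and starting from $(4,4)=(2,2,2,2)'$ one reaches $\mu$ by a disjoint union adding the partition $(\mu_1-4,\mu_2-4)$ — landing on conjugate $(\mu_1,\mu_2)$ — followed by an ordinal sum merging in $(\mu_3,\mu_4,\dots)$, whose parts are all $\le\mu_2$, so the result is exactly $\mu$. If instead $\lambda\supseteq(3,3,1)$, then $\mu_1\ge 3$ and $\mu_2\ge\mu_3\ge 2$; the condition $\mu_2\ge 4$ is equivalent to $\lambda\supseteq(2,2,2,2)$, so we may assume $\mu_2\in\{2,3\}$, hence $(\mu_2,\mu_3)\in\{(2,2),(3,2),(3,3)\}$. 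Taking the seed of shape $(3,3,1)$, $(3,3,2)$, or $(3,3,3)$ respectively — whose conjugates are $(3,2,2)$, $(3,3,2)$, $(3,3,3)$, i.e.\ precisely $(3,\mu_2,\mu_3)$ — a disjoint union increasing the first part of the conjugate by $\mu_1-3$ gives $(\mu_1,\mu_2,\mu_3)$, and an ordinal sum merging in $(\mu_4,\mu_5,\dots)$ (all parts $\le\mu_3$) gives $\mu$. The two cases exhaust the hypothesis.

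The heart of the argument — and where I expect the real work — is the direct construction of the two new seed pairs, of shapes $(3,3,2)$ and $(3,3,3)$. I would model them on Examples~\ref{sevenelement} and~\ref{cycle}: take $R$ to be a small symmetric poset (two chains interchanged by an involution, or several chains cyclically permuted, together with suitable cross-relations and possibly a ``pivot''), and choose incomparable pairs $e$, $f$ so that the hypotheses of Lemma~\ref{Kequiv} hold; the lemma then produces a $K$-equivalent pair $P$, $Q$. Two things must then be verified: that $P\ncong Q$, most cleanly by exhibiting a statistic they do not share — e.g.\ a jump pair or an up-jump multiset, as in Example~\ref{hook example}; and, more delicately, that the Greene shape of $P$ is exactly $(3,3,2)$, respectively $(3,3,3)$, which amounts to computing the maximal unions of antichains in these small posets built from chains, cross-relations, and a pivot. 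Once the seeds are in place, the remaining work — checking that the conjugate-partition arithmetic above genuinely sweeps out both families, and that the closure moves preserve non-isomorphism — is routine.
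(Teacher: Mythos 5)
Your reduction framework is sound, and in fact cleaner in its bookkeeping than the paper's: the closure operations (disjoint union adds shape-conjugates componentwise; ordinal sum adds shapes componentwise, i.e.\ takes the multiset union of conjugates) do preserve $K$-equivalence and non-isomorphism for the reasons you give, and your case analysis on $\mu=\lambda'$ correctly shows that every target shape is reachable from a seed of shape $(3,3,1)$, $(3,3,2)$, $(3,3,3)$, or $(2,2,2,2)$. Your argument also correctly identifies that the first seed alone does not suffice: neither closure move can turn the conjugate $(3,2,2)$ into $(3,3,2)$, so shapes such as $(4,3,2)$ or $(3,3,3)$ genuinely require new seed pairs.

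That is exactly where the gap is: the seeds of shape $(3,3,2)$ and $(3,3,3)$ are never constructed. You describe a plan (``take $R$ to be a small symmetric poset\dots choose $e$, $f$ so that Lemma~\ref{Kequiv} applies'') and explicitly defer the verification of non-isomorphism and of the Greene shape, but these verifications are the substantive content of the theorem for those shapes, and nothing in the proposal guarantees that such a choice of $R$, $e$, $f$ exists or works. The paper closes precisely this hole by a different route: instead of a finite seed list plus closure, it directly generalizes Example~\ref{sevenelement} to a two-parameter family $P', Q'$ of shape $(\lambda_1,\lambda_2,\lambda_3)$ for every triple containing $(3,3,1)$ (lengthening the two interlocked chains to $\lambda_2$, the side chain to $\lambda_3$, and grafting a chain of length $\lambda_1-\lambda_2$ on top), after which only disjoint unions with chains are needed; the $(2,2,2,2)$ case is handled separately via $(P_8\oplus C)\sqcup R$. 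Specializing the paper's family to $(3,3,2)$ and $(3,3,3)$ would supply exactly the seeds you need, so your architecture can be completed --- but as written the proof is incomplete for every shape whose conjugate has $\mu_2=3$ or $(\mu_2,\mu_3)=(3,3)$ with $\mu_2<4$.
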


\begin{proof}
We will prove this result by building off of the posets from Example \ref{sevenelement} and Example \ref{cycle}.  Observe that if $\sh(P) = \mu = (\mu_1, \dots, \mu_k)$ and $\sh(Q) = \nu = (\nu_1, \dots, \nu_l)$, then
\[\sh(P \oplus Q) = \mu + \nu = (\mu_1+\nu_1, \mu_2+\nu_2, \dots ).\] 
Also observe that
\[\sh(P \sqcup Q) = \mu \cup \nu = (\mu_1'+\nu_1', \mu_2'+\nu_2', \dots )'.\]

Let $\lambda = (\lambda_1, \lambda_2, \dots, \lambda_k)$ be a partition that contains either $(3, 3, 1)$ or $(2, 2, 2, 2)$.  If $\lambda$ contains $(3, 3, 1)$ then we will first form two $K$-equivalent posets that have shape $(\lambda_1, \lambda_2, \lambda_3)$ and then take the disjoint union with the poset of disjoint chains of sizes $\lambda_4, \lambda_5, \dots, \lambda_k$.

Consider the following posets $P'$ and $Q'$ depicted below of shape $(\lambda_1, \lambda_2, \lambda_3)$.

\begin{center}
\begin{tikzpicture}
 [auto,
 vertex/.style={circle,draw=black!100,fill=black!100,thick,inner sep=0pt,minimum size=1mm}]
\node(P) at (-2, 2) {$P' =$};
\node (v1) at ( 0,0) [vertex,label=left:$$] {};
\node (v2) at ( 1,0) [vertex,label=left:$$] {};
\node (v3) at ( -1,1) [vertex,label=left:$$] {};
\node (v4) at ( 0,.4) [vertex,label=left:$$] {};
\node (v5) at ( 1,.4) [vertex,label=left:$$] {};
\node (v6) at ( 0,2) [vertex,label=left:$$] {};
\node (v7) at ( 1,2) [vertex,label=left:$$] {};
\node (v8) at (-1,.5) [vertex] {};
\node (v9) at (-1,1.5) [vertex] {};
\node (v10) at (.5, 2.5) [vertex] {};
\node (v11) at (.5, 3) [vertex] {};
\draw [-] (v1) to (v4);
\draw [dashed] (v4) to (v6);
\draw [-] (v2) to (v5);
\draw [dashed] (v5) to (v7);
\draw [-] (v1) to (v3);
\draw [-] (v3) to (v6);
\draw [-] (v1) to (v7);
\draw [-] (v2) to (v6);
\draw [dashed] (v8) to (v3);
\draw [dashed] (v3) to (v9);
\draw [-] (v6) to (v10);
\draw [-] (v7) to (v10);
\draw [dashed] (v10) to (v11);
\draw [decorate,decoration={brace,amplitude=5pt},xshift=-4pt,yshift=0pt]
(-1.1,.5) -- (-1.1,1.5) node [black,midway,xshift=-0.2cm] 
{\footnotesize $\lambda_3$};
\draw [decorate,decoration={brace,amplitude=5pt,mirror,raise=4pt},yshift=0pt]
(1.1,0) -- (1.1,2) node [black,midway,xshift=0.9cm] {\footnotesize
$\lambda_2$};
\draw [decorate,decoration={brace,amplitude=2pt,mirror,raise=4pt},yshift=0pt]
(.55,2.5) -- (.55,3) node [black,midway,xshift=1.6cm] {\footnotesize
$\lambda_1 - \lambda_2$};
\end{tikzpicture}
\hspace{3 cm}
\begin{tikzpicture}
 [auto,
 vertex/.style={circle,draw=black!100,fill=black!100,thick,inner sep=0pt,minimum size=1mm}]
\node(Q) at (-2, 2) {$Q' =$};
\node (v1) at ( 0,0) [vertex,label=left:$$] {};
\node (v2) at ( 1,0) [vertex,label=left:$$] {};
\node (v3) at ( -1,1) [vertex,label=left:$$] {};
\node (v4) at ( 0,.4) [vertex,label=left:$$] {};
\node (v5) at ( 1,.4) [vertex,label=left:$$] {};
\node (v6) at ( 0,2) [vertex,label=left:$$] {};
\node (v7) at ( 1,2) [vertex,label=left:$$] {};
\node (v8) at (-1,.5) [vertex] {};
\node (v9) at (-1,1.5) [vertex] {};
\node (v10) at (.5, 2.5) [vertex] {};
\node (v11) at (.5, 3) [vertex] {};
\draw [-] (v1) to (v4);
\draw [dashed] (v4) to (v6);
\draw [-] (v2) to (v5);
\draw [dashed] (v5) to (v7);
\draw [-] (v1) to (v3);
\draw [-] (v3) to (v7);
\draw [-] (v2) to (v6);
\draw [dashed] (v8) to (v3);
\draw [dashed] (v3) to (v9);
\draw [-] (v6) to (v10);
\draw [-] (v7) to (v10);
\draw [dashed] (v10) to (v11);
\draw [decorate,decoration={brace,amplitude=5pt},xshift=-4pt,yshift=0pt]
(-1.1,.5) -- (-1.1,1.5) node [black,midway,xshift=-0.2cm] 
{\footnotesize $\lambda_3$};
\draw [decorate,decoration={brace,amplitude=5pt,mirror,raise=4pt},yshift=0pt]
(1.1,0) -- (1.1,2) node [black,midway,xshift=0.9cm] {\footnotesize
$\lambda_2$};
\draw [decorate,decoration={brace,amplitude=2pt,mirror,raise=4pt},yshift=0pt]
(.55,2.5) -- (.55,3) node [black,midway,xshift=1.6cm] {\footnotesize
$\lambda_1 - \lambda_2$};
\end{tikzpicture}
\end{center}

Since $\lambda_2 \geq 3$, $P' \ncong Q'$.  As in Example~\ref{sevenelement}, it follows from Lemma \ref{Kequiv} that $P'$ and $Q'$ are $K$-equivalent. Now let $R$ be the poset of disjoint chains of sizes $\lambda_4, \lambda_5, \dots, \lambda_k$, and let $P = P' \sqcup R$ and $Q = Q' \sqcup R$.  These posets have the desired shape $\lambda$, and since $K_{P'}(\mathbf{x}) = K_{Q'}(\mathbf{x})$, it follows that $\KP = \KQ$.

Now suppose $\lambda \supset (2, 2, 2, 2)$ but it does not contain $(3, 3, 1)$, so $\lambda$ has the form $\lambda = (\lambda_1, 2^j, 1^l)$.  Let $C$ be a $(\lambda_1 - 2)$-element chain, and let $R$ be the poset with $j-3$ disjoint $2$-element chains and $l$ disjoint single elements.  Let $P_8$ and $Q_8$ be the $8$-element posets from Example \ref{cycle}.  If we let $P = (P_8 \oplus C) \sqcup R$ and $Q = (Q_8 \oplus C) \sqcup R$, then $P$ and $Q$ have the desired shape. Since $P_8$ and $Q_8$ are $K$-equivalent, $\KP = \KQ$.
\end{proof}

The only remaining shapes for which it is not known whether there exists non-isomorphic $K$-equivalent posets are those of the form $(\lambda_1, 2, 2, 1, 1, \dots, 1)$.

\printbibliography

\end{document}